\normalfont\fontsize{15}{15}\bfseries}{\thesection}{1em}{}
\normalfont\fontsize{13}{13}\bfseries}{\thesubsection}{1em}{}
\newcommand{\N}{\ensuremath{\mathbb{N}}}
\renewcommand{\S}{\ensuremath{\mathbb{S}}}
\newcommand{\R}{\ensuremath{\mathbb{R}}}
\newcommand{\X}{\mathbb{X}}
\newcommand{\cO}{\mathcal{O}}
\newcommand{\cH}{\mathcal{H}}
\newcommand{\cG}{\mathcal{G}}
\newcommand{\sdd}{\S ^{d-1}}
\def\3{\ss}
\newcommand*\pFq[6][8]{
  \begingroup 
  \pFqmuskip=#1mu\relax
  \begingroup\lccode`\~=`\,
  \lowercase{\endgroup\let~}\pFqcomma
  {}_{#2}F_{#3}{\left(\genfrac..{0pt}{}{#4}{#5};#6\right)}%
  \endgroup
}
\newcommand*\pRegFq[6][8]{
  \begingroup 
  \pFqmuskip=#1mu\relax
  \begingroup\lccode`\~=`\,
  \lowercase{\endgroup\let~}\pFqcomma
  {}_{#2}\tilde{F}_{#3}{\left(\genfrac..{0pt}{}{#4}{#5};#6\right)}%
  \endgroup
}
\newcommand{\pFqcomma}{\mskip\pFqmuskip}
\DeclareMathOperator*{\spann}{span}
\DeclareMathOperator{\dist}{dist}
\DeclareMathOperator*{\vol}{vol}
\newtheorem{thm}{Theorem}[section]
\newtheorem{lemma}[thm]{Lemma}
\newtheorem{definition}[thm]{Definition}
\newtheorem{example}[thm]{Example}
\newtheorem{corollary}[thm]{Corollary}
\newtheorem{proposition}[thm]{Proposition}
\begin{document}
\title[]{$t$-Design Curves and Mobile Sampling on the Sphere}
\author[M.~Ehler]{Martin Ehler}
\address[M.~Ehler]{University of Vienna,
Faculty of Mathematics, Vienna, Austria
}
\email{martin.ehler@univie.ac.at}
\author[K.~Gr\"ochenig]{Karlheinz Gr\"ochenig}
\address[K.~Gr\"ochenig]{University of Vienna,
Faculty of Mathematics, Vienna, Austria
}
\email{karlheinz.groechenig@univie.ac.at}

\subjclass[2010]{41A55,41A63,94A12,26B15,}
\keywords{$t$-design, spectral subspace, exact quadrature, spherical harmonics}
\thanks{K.\ G.\ was
  supported in part by the  project P31887-N32  of the
Austrian Science Fund (FWF)}

\begin{abstract}
In analogy to classical spherical $t$-design points, we introduce the
concept of $t$-design curves on the sphere. This means that the line
integral along a $t$-design curve integrates polynomials of degree $t$
exactly. For low degrees we construct explicit examples. We also derive lower asymptotic bounds on
the lengths of $t$-design curves. Our main results prove  the existence of
asymptotically optimal $t$-design curves in the Euclidean $2$-sphere
and the existence of $t$-design curves in the $d$-sphere. 
\end{abstract}

\maketitle

\section{Introduction}
Spherical designs are point sets in the sphere $\S ^d = \{x\in \R
^{d+1}: \|x\|=1\}$ that yield exact quadrature rules with constant weights for polynomial
spaces. Thus a finite set $X_t\subseteq \S ^d$
is a $t$-design (or $X_t$ consists of $t$-design points), if for every algebraic polynomial $f$ in $d+1$
variables of (total) 
degree $t$ one has
\begin{equation}
  \label{eq:c15b}
\frac{1}{|X_t|} \sum_{x\in X_t}  f(x)  = \int _{\S ^d} f \, .
 \end{equation}
This concept plays an important role in numerical analysis,
approximation theory, and many related fields, and the theory,
construction, and applications of spherical designs has become a
highly developed art. See~\cite{Delsarte:1977aa,Seidel:2001aa,Seymour:1984bh} and 
\cite{Brauchart:fk,Graf:2011lp,Sloan:2004qd,Womersley:2018we} for a
sample of  papers on  spherical $t$-design
points. In particular, 
 the existence of asymptotically optimal $t$-design points in the
 $d$-sphere has long been an open problem and has  eventually been proved by Bondarenko, Radchenko, and
Viazovska in \cite{Bondarenko:2011kx}.  

In this paper we study a variation where points are replaced by
curves. The goal is again to obtain    quadrature formulas along
curves in  the sphere   that are exact for polynomials of a given
degree. The central  notion  is the definition of a $t$-design curve.  
Precisely, a closed, piecewise smooth  curve $\gamma :
[0,1] \to \S ^d$ with at most finitely many self intersections and with arc length $\ell (\gamma )$ is called a
$t$-design curve in  $\S ^d$ if the line integral integrates exactly all algebraic polynomials in $d+1$
variables of degree $t$,
\begin{equation}
  \label{eq:int1}
\frac{1}{\ell (\gamma )} \int _\gamma f = \int _{\S ^d} f\,.
\end{equation}
The use of curves instead of point evaluations in
definition~\eqref{eq:int1}   is motivated  by
 numerous analogous  applications of curves for the  collection  and
 processing of  data on the sphere. Here is a short list of
 applications of curves in a similar spirit: Low-discrepancy curves were
discussed in \cite{Ramamoorthy:2008di} as an efficient coverage of space with applications in robotics. See also the textbook \cite{LaValle:2006aa} on robotics, where curves are derived for motion planning to obtain an
optimal path under several side constraints. Space-filling
curves are used as dimensionality reduction tools in optimization,
image processing, and deep learning
cf.~\cite{Goetzel:1999aa,Chen:2022aa,Tsinganos:2021aa}. Curves are applied in \cite{Ehler:2019aa} to approximate 
probability measures. The concept of principal curves is discussed in \cite{Hauberg:2015dw,Hastie:1989rw,Kegl:2000ap,Lee:2021hj} to best fit given data. In another statistical context, the information tuning curve quantifies discriminatory abilities of populations of neurons \cite{Ringach:2010aa,Kang04}. Motivated by more geometric questions, length and thickness of ropes on
spheres are studied in \cite{Gerlach:2011aa,Gerlach:2011ab} as variants of packing problems.  In the context of optimization problems the shortest closed space curve to inspect a sphere is determined in \cite{Ghomi:2021aa}, variations are discussed in \cite{Zalgaller:2005aa}. Energy minimization and geometric arrangements in biophysics lead to optimality questions of knots and ropes \cite{Cantarella:2002aa,OHara:1992aa,Yu:2021aa}. In mobile sampling curve trajectories provide sampling sets that enable efficient signal reconstruction 
\cite{Benedetto:2000aa,Jaming:2020xu,Grochenig:2015ya,Jaye:2022aa,Unnikrishnan:2013df,Unnikrishnan:2013aa,Rashkovskii:2020aa}. 

Our goal is to study integration  on the sphere
by using information along closed curves rather than point
evaluations.  The new notion of $t$-design curves in \eqref{eq:int1}  addresses exact integration on the
sphere along curves and the related problem of the exact
reconstruction of bandlimited functions on the sphere. The pertinent questions of $t$-design curves on spheres are similar to
those of spherical $t$-design points. \\
Problem $(A)$:  What is  the minimal (order of the) arc  length of a $t$-design curve? \\
Problem $(B)$:  Do  $t$-design curves exist  on $\S ^d$ for all $t\in\N$? \\
Problem $(C)$: If yes, are there  $t$-design curves on $\S ^d$
achieving  the optimal order of arc length? \\
Problem $(D)$: Provide explicit constructions of $t$-design curves.

The answers to the analogous questions for $t$-design points  
have a long history and 
 culminate in the solution of the
Korevaar-Meyers conjecture by Bondarenko, Radchenko, and
Viazovska~\cite{Bondarenko:2011kx} mentioned above. 

Our program is to make a first attempt at these questions for
$t$-design \emph{curves} on  $d$-spheres. We will offer answers to
$(A)$ and $(B)$ and give a solution of problem $(C)$ on the sphere $\S
^2$. As a contribution to Problem $(D)$ we will  construct some
examples of smooth $t$-design curves for small degrees $t$.  
\vspace{3mm}

\textbf{Results.}  In the following we
denote the space of algebraic polynomials of $d+1$ real
variables of (total) degree $t$ by
$\Pi _t$. As a necessary condition for the length of a
$t$-design curve we obtain the following answer to Problem $(A)$.

\begin{thm} \label{tint1}
  Assume that a piecewise smooth, closed curve $\gamma : [0,1] \to \S
  ^d$ satisfies
  $$
\frac{1}{\ell (\gamma )} \int _\gamma f = \int _{\S ^d} f \qquad
\text{ for all } \, f \in \Pi _t  \, .
$$
Then its length is bounded from below by 
$$
\ell (\gamma ) \geq C_d t^{d-1 }
$$
with some constant $C_d>0$ that may depend on the dimension $d$ but is independent of $t$ and $\gamma$.
\end{thm}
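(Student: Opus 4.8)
The plan is to reduce the statement to an exact $L^2$ Marcinkiewicz--Zygmund identity and then to exploit the reproducing kernel of the polynomial space. Write $\sigma$ for the normalized surface measure on $\S ^d$ and set $s=\lfloor t/2\rfloor$. For every $q\in\Pi _s$ the product $q^2$ lies in $\Pi _{2s}\subseteq\Pi _t$, so applying the hypothesis to $f=q^2$ gives
$$
\int _\gamma q^2 \;=\; \ell (\gamma )\int _{\S ^d}q^2\,d\sigma \qquad\text{for all } q\in\Pi _s ,
$$
where the left-hand integral is taken with respect to arc length. Thus the arc-length measure of $\gamma$ reproduces the $L^2(\sigma)$-norm on $\Pi _s$ up to the factor $\ell (\gamma )$.

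Next I would feed the reproducing kernel into this identity. Let $K_s(x,y)=\sum _{k=0}^s Z_k(x,y)$ be the reproducing kernel of $\Pi _s$ restricted to the sphere, so that $K_s(x,\cdot)\in\Pi _s$, and, by the reproducing property together with $Z_k(x,x)=\dim\cH _k$,
$$
\int _{\S ^d}K_s(x,y)^2\,d\sigma(y)=K_s(x,x)=D_s ,\qquad D_s:=\dim\Pi _s\asymp s^{d},
$$
a quantity independent of $x$. Choosing $q=K_s(x,\cdot)$ in the identity above yields, for every fixed $x\in\S ^d$,
$$
\int _\gamma K_s(x,y)^2\,ds(y)\;=\;\ell (\gamma )\,D_s .
$$

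It remains to bound the left-hand side from below when $x$ is chosen on the curve. Here I would invoke the standard near-diagonal estimate for the Dirichlet-type kernel: there is a constant $c_d>0$ with $K_s(x,y)\ge \tfrac12 D_s$ whenever the geodesic distance $d(x,y)\le c_d/s$. Pick a smooth point $x=\gamma (u_0)$; since $\gamma$ is closed and, for $t$ large, not contained in the cap $B(x,c_d/s)$, the curve must travel arc length at least $c_d/s$ before leaving this cap, so $\Length(\gamma\cap B(x,c_d/s))\ge c_d/s$. On this arc $K_s(x,y)^2\ge \tfrac14 D_s^2$, whence
$$
\ell (\gamma )\,D_s \;=\; \int _\gamma K_s(x,y)^2\,ds(y)\;\ge\;\frac14 D_s^2\cdot\frac{c_d}{s}.
$$
Cancelling $D_s$ and inserting $D_s\asymp s^{d}$, $s\asymp t$ gives $\ell (\gamma )\ge C_d\,t^{d-1}$.

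I expect the only genuine obstacle to be the near-diagonal lower bound $K_s(x,y)\ge \tfrac12 D_s$ on a cap of radius $c_d/s$: this is precisely the point where the one-dimensionality of the curve meets the $s^{-1}$ concentration scale of degree-$s$ polynomials, and it has to be extracted from the asymptotics of the Christoffel--Darboux/Gegenbauer kernel rather than from crude positivity (the kernel oscillates, so one must stay inside its first hump). Everything else is bookkeeping: the passage from $\Pi _t$ to squares $q^2$ with $q\in\Pi _s$, the reproducing identity for $K_s$, and the elementary fact that a curve leaving a cap of radius $\rho$ has length at least $\rho$ inside it. For the finitely many small $t$ where $\gamma$ could fit inside such a cap the inequality is trivial after shrinking $C_d$.
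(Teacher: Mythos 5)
Your proof is correct, and it takes a genuinely different route from the paper's. The paper argues in two stages: first it bounds the covering radius $\rho$ of the trajectory by $O(t^{-1})$, using a bump function supported in a cap missed by the curve (from Breger--Ehler--Gr\"af and Brandolini et al.) together with an imported $L^1$-Sobolev quadrature error estimate; then it converts this into a length bound by a packing (Gilbert--Varshamov) argument: there are $\gtrsim \rho^{-d}$ disjoint caps which the curve must all visit, at a travel cost $\gtrsim \rho$ per cap, whence $\ell(\gamma)\gtrsim \rho^{1-d}\gtrsim t^{d-1}$. You instead test exactness on a single well-chosen function, the square of the reproducing kernel $K_s(x,\cdot)$ of $\Pi_s$, $s=\lfloor t/2\rfloor$, with $x$ on the curve; this is precisely the curve analogue of the classical reproducing-kernel proof that $t$-design \emph{points} satisfy $|X_t|\gtrsim t^d$, with the point mass at $x$ replaced by the arc of $\gamma$ inside the cap $B_{c_d/s}(x)$. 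The one step you flagged as delicate --- $K_s(x,y)\geq\tfrac12 D_s$ for $\dist(x,y)\leq c_d/s$ --- is in fact soft and needs no Gegenbauer asymptotics: $\sup_y|K_s(x,y)| = K_s(x,x)=D_s$ by Cauchy--Schwarz in the reproducing kernel Hilbert space, the restriction of $K_s(x,\cdot)$ to a great circle through $x$ is a trigonometric polynomial of degree $\leq s$, and Bernstein's inequality gives $|K_s(x,y)-D_s|\leq \dist(x,y)\,s\,D_s$, so $c_d=\tfrac12$ works. Your closing remark about small $t$ (and about $\gamma$ possibly fitting inside the cap) does deserve its one-line justification: any $1$-design curve has length bounded below by an absolute constant and cannot lie in a cap of radius $<\pi/2$, since a curve contained in such a cap has arc-length centroid with positive inner product against the cap's center, contradicting $\frac{1}{\ell(\gamma)}\int_\gamma y\,\mathrm{d}s(y)=0$. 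As for what each approach buys: yours is more self-contained (no external quadrature-error theorem, no packing argument, exactness used for only one function), while the paper's yields as a by-product the covering radius bound $\rho\lesssim t^{-1}$, which mirrors the point-design theory, is reused in spirit in the constructive Sections 5 and 6, and is the part of the argument that transfers verbatim to general compact manifolds, as noted in the paper's Outlook.
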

By comparison, 
a spherical $t$-design requires $|X_t| \asymp t^d$
points~ \cite{Harpe:2005fk,Seidel:2001aa,Delsarte:1977aa}\footnote{We write 
  $\lesssim$ if the left-hand-side is bounded by a constant times the
  right-hand-side. If  $\lesssim$ and $\gtrsim$ both hold, then we
  write $\asymp$.}.  

The next challenge is to prove the existence of $t$-design curves that
match the asymptotic order $ t^{d-1}$. For the unit sphere in $\R
^3$ we succeeded in proving the existence. This solves Problem $(C)$
for $\S ^2$. 

\begin{thm} \label{tint2}
  In $\S ^2$ there exists a sequence of $t$-design curves $\left(\gamma _t\right)_{t\in\N}$ with length $\ell (\gamma _t) \asymp t $. 
\end{thm}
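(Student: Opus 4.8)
The plan is to exploit the rotational structure of $\S^2$ in order to reduce the two-dimensional problem to a one-dimensional quadrature. Writing points of $\S^2$ in spherical coordinates $(\theta,\phi)$, recall that $\Pi_t$ restricted to $\S^2$ equals $\bigoplus_{k=0}^{t}\cH_k$, spanned by the spherical harmonics $Y_k^m=P_k^{|m|}(\cos\theta)\,e^{\ii m\phi}$ with $|m|\le k\le t$. Thus \eqref{eq:int1} for $f\in\Pi_t$ is equivalent to the finitely many conditions $\int_\gamma Y_k^m=0$ for $1\le k\le t$, $|m|\le k$. The key observation is that the azimuthal and zonal parts decouple: along a full latitude circle $C_\theta=\{\theta=\mathrm{const}\}$ the arc-length element is $\sin\theta\,d\phi$, so $\int_{C_\theta}Y_k^m=P_k^{|m|}(\cos\theta)\sin\theta\int_0^{2\pi}e^{\ii m\phi}\,d\phi=0$ whenever $m\neq 0$. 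Hence a curve built from full latitude circles annihilates every non-zonal harmonic automatically, and only the zonal harmonics $Y_k^0\propto P_k(\cos\theta)$ remain to be controlled.

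First I would fix the skeleton of the curve as a union of $n$ full latitude circles $C_{\theta_1},\dots,C_{\theta_n}$ together with short connecting arcs, and choose $n\asymp t$. For the zonal harmonics this skeleton acts as a one-dimensional quadrature rule: setting $z_j=\cos\theta_j$, the circle at $\theta_j$ contributes line integral $2\pi\sin\theta_j\,P_k(\cos\theta_j)$ and length $2\pi\sin\theta_j$, so after normalisation the design conditions become
\begin{equation}
\sum_{j=1}^n \sqrt{1-z_j^2}\,P_k(z_j)=0,\qquad k=1,\dots,t .
\end{equation}
Equivalently, the nodes $z_j$ with weights proportional to the circumferences $\sqrt{1-z_j^2}$ must form a quadrature rule on $[-1,1]$ that is exact, against Lebesgue measure, for all polynomials of degree $\le t$. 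Choosing the $z_j$ symmetrically about the equator makes all odd-degree conditions hold for free, leaving $\asymp t/2$ conditions for the even degrees.

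The heart of the argument --- and the step I expect to be the main obstacle --- is to prove that this constrained quadrature admits a solution with only $n\asymp t$ nodes, since here the weights are \emph{not} free but are tied to the nodes through the geometric factor $\sqrt{1-z_j^2}$, so that no classical Gauss or interpolatory rule applies directly. I would establish existence by a topological/continuity argument in the spirit of Bondarenko--Radchenko--Viazovska \cite{Bondarenko:2011kx}: view the even moments $(z_1,\dots,z_n)\mapsto(\sum_j\sqrt{1-z_j^2}P_{2k}(z_j))_k$ as a continuous map of the node vector, and show via a degree or intermediate-value argument (or, abstractly, by locating the target in the interior of the attainable moment cone, a Tchakaloff-type statement) that the map has a zero. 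The balance of $\asymp t$ nodes against $\asymp t$ conditions is precisely what keeps the node number --- and hence the length --- optimal.

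Finally I would assemble the skeleton into a single admissible curve and check the length. The connecting arcs are placed along meridians in a symmetric fashion (distributed over equally spaced longitudes, or compensated in antipodal pairs) so that they do not contribute to any non-zonal harmonic and only perturb the zonal conditions, which are folded back into the quadrature system; this keeps the curve closed, piecewise smooth, and with finitely many self-intersections. The total length is $\sum_j 2\pi\sin\theta_j$ from the circles plus $O(t)$ from the connectors, hence $\ell(\gamma_t)\lesssim t$; together with the lower bound $\ell(\gamma_t)\gtrsim t$ from Theorem~\ref{tint1} in the case $d=2$ this yields $\ell(\gamma_t)\asymp t$, as claimed.
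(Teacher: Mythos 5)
Your reduction is genuinely different from the paper's route, and its first half is sound: a union of full latitude circles does annihilate every non-zonal harmonic, so only the zonal conditions $\sum_{j=1}^n \sqrt{1-z_j^2}\,P_k(z_j)=0$, $1\le k\le t$, survive. But the step you yourself flag as ``the main obstacle'' is exactly where the entire difficulty of the theorem now sits, and your proposal does not close it. Tchakaloff-type results are of no help: they produce quadratures with \emph{free} positive weights, whereas your weights are rigidly tied to the nodes through the factor $\sqrt{1-z_j^2}$. Nor does the machinery of \cite{Bondarenko:2011kx} transfer routinely: their degree-theoretic argument is engineered for equal-weight point configurations on the sphere (area-regular partitions, well-separated perturbations), and adapting it to a constrained one-dimensional moment problem is a research problem in its own right, not a verification. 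To see how delicate the step is, substitute $z_j=\cos\theta_j$: since the span of $\{\sin\theta\,p(\cos\theta):\deg p\le t\}$ is the span of $\{\sin(m\theta):1\le m\le t+1\}$, your conditions are equivalent to $\sum_j\sin(m\theta_j)=c\,(1-(-1)^m)/m$ for all $1\le m\le t+1$, i.e.\ the empirical sine-moments of the nodes must reproduce \emph{exactly} the Fourier coefficients of a square wave up to frequency $t+1$. The natural candidate $\theta_j=(2j-1)\pi/(2n)$ satisfies the even-$m$ conditions but gives $\sum_j\sin(m\theta_j)=1/\sin\bigl(m\pi/(2n)\bigr)$ for odd $m$, which matches the target $2c/m$ only to leading order in $m/n$; the mismatch is of order one when $m$ is comparable to $n\asymp t$, so there is no small parameter on which to run a perturbation or implicit-function argument, and no existence proof is in sight. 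Until this constrained quadrature with $n\asymp t$ nodes is actually established, the theorem is not proved.

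There is a second, smaller gap: the connectors. Meridian arcs \emph{do} contribute to non-zonal harmonics, and the symmetrizations you propose do not remove these contributions. Antipodal pairs kill only odd orders $m$, since the contribution carries the factor $1+(-1)^m$; to annihilate all $1\le |m|\le t$ you must repeat the connectors at $N>t$ equally spaced longitudes, or use $N>t/2$ full polar great circles. But those copies contribute a nonzero amount to every even-degree zonal condition, because $\int_0^\pi P_{2k}(\cos\theta)\,\mathrm{d}\theta=\pi\,[P_{2k}(0)]^2\neq 0$, so the inhomogeneity feeds back into the very moment problem you have not solved. Moreover, an arc ending on a circle creates a degree-$3$ vertex, so the resulting union of arcs and circles has odd-degree vertices, admits no Eulerian circuit, and hence cannot be traversed by a single closed curve without retracing --- which the definition of a $t$-design curve (finitely many self-intersections) forbids. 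Contrast this with the paper's proof of Theorem~\ref{thm:t-design}: there, exactness is \emph{inherited} from the $t$-design points of \cite{Bondarenko:2011kx} through Samko's identity of Lemma~\ref{lemma:Samko analog}, $\int_{\partial B_r(x)}f=c_{2,k}(r)f(x)$, so no new moment problem arises at all; the $O(t^2)$ circles of radius $\asymp t^{-1}$ centered at the design points automatically form a $4$-regular graph once $r$ equals the covering radius, and Euler's theorem then yields a single closed curve of length $\asymp t^2\cdot t^{-1}=t$. Your route would need a substitute for that inheritance mechanism before it can be called a proof.
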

Note that even for $d=2$ the corresponding problem of the  existence
of spherical $t$-designs  points was solved only in 2011
~\cite{Bondarenko:2011kx}. In our proof we will make substantial use
of the result from~\cite{Bondarenko:2011kx}. 
In dimension $d\geq 3$ we prove the existence of $t$-design
curves. This is a solution to Problem $(B)$. 

\begin{thm} \label{tint3}
In $\S^d$ for $d\geq 3$ there exists a sequence of $t$-design curves $\left(\gamma
_t\right)_{t\in\N}$, such that $\ell (\gamma _t) \lesssim t^{d(d-1)/2}$. 
\end{thm}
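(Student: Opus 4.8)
The plan is to argue by induction on the dimension $d$, reducing the construction on $\S^d$ to one on $\S^{d-1}$. The base cases are $d=1$, where the full circle is trivially a $t$-design curve of constant length, and $d=2$, which is Theorem~\ref{tint2} and gives length $\asymp t = t^{d(d-1)/2}$. For the inductive step I would write a point of $\S^d$ as $x=(\sqrt{1-s^2}\,\omega,s)$ with $\omega\in\S^{d-1}$ and $s\in[-1,1]$, so that for every $f\in\Pi_t$
\begin{equation}
\int_{\S^d} f = c_d\int_{-1}^1 (1-s^2)^{\frac{d-2}{2}} I(s)\dx s,\qquad I(s):=\int_{\S^{d-1}} f(\sqrt{1-s^2}\,\omega,s)\dx\sigma_{d-1}(\omega).
\end{equation}
For fixed $s$ the integrand $\omega\mapsto f(\sqrt{1-s^2}\,\omega,s)$ is a polynomial of degree $\le t$, and $I$ is a polynomial in $s$ of degree $\le t$. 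I would then discretize the outer integral by Gauss--Gegenbauer quadrature for the weight $(1-s^2)^{(d-2)/2}$, with nodes $s_1,\dots,s_N$ and positive weights $\beta_1,\dots,\beta_N$, $N\asymp t$, which is exact on degree $t$.

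Each latitude $\{x_{d+1}=s_k\}$ is a copy of $\S^{d-1}$ of radius $r_k=\sqrt{1-s_k^2}$. By the inductive hypothesis there is a $t$-design curve on $\S^{d-1}$ of length $L_{d-1}\lesssim t^{(d-1)(d-2)/2}$, and a scaled copy placed at height $s_k$ is a closed curve of length $r_k\lambda_k$ whose arc-length integral equals $r_k\lambda_k\,I(s_k)$, since the $t$-design property evaluates the inner average $I(s_k)$ exactly. Hence, if the total curve has length $\ell$ and spends arc length $r_k\lambda_k=c_d\beta_k\,\ell$ at latitude $s_k$, the contributions reproduce the Gauss quadrature and the whole curve integrates $\Pi_t$ exactly. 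Realizing $\lambda_k$ by concatenating $n_k\asymp c_d\beta_k\ell/(r_kL_{d-1})$ copies of the $\S^{d-1}$ design forces $\lambda_k\ge L_{d-1}$ at every node, and the binding constraint occurs at the nodes nearest the poles, where $r_k\asymp 1/t$ and $\beta_k\asymp r_k^{d-1}/t$, so that $\max_k r_k/\beta_k\asymp t^{d-1}$ and therefore $\ell\asymp L_{d-1}\,t^{d-1}$. The recursion $L_d\lesssim t^{d-1}L_{d-1}$ with $L_2\asymp t$ then unwinds to $L_d\lesssim t^{d(d-1)/2}$, which is exactly the claimed order.

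To assemble this data into one admissible curve I would use the elementary gluing principle that if two closed curves share a common point and each reproduces a given averaging functional, then their concatenation reproduces the same functional and has exactly the summed length. Applying this at each fixed latitude, after rotating the latitude curves so that they pass through one prescribed point, merges all copies there at no extra length; consecutive latitudes are then joined along a single fixed meridian. Since the inter-latitude arcs are disjoint sub-arcs of one great semicircle, their total length is $O(1)$ and does not disturb the asymptotic order of $\ell$.

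The main obstacle is to make the final curve integrate $\Pi_t$ \emph{exactly} rather than only asymptotically, and this has two sources. First, the weights must be matched exactly, whereas concatenating integer numbers of copies realizes only a discrete semigroup of lengths; I would remove this with a flexibility lemma asserting that the set of lengths of $t$-design curves on $\S^{d-1}$ contains every sufficiently large real number (the union of two designs of incommensurable length generates a semigroup that eventually fills $[\ell_0,\infty)$). Second, and more seriously, the meridian bridges contribute a nonzero functional $f\mapsto\int_{\text{bridges}}f$ that must be cancelled; here I would make the entire construction invariant under the antipodal map $x\mapsto-x$, which annihilates all odd-degree contributions for free, and then absorb the remaining bounded even-degree defect either by perturbing the quadrature weights within the $(N-t)$-dimensional family of exact rules or by adjoining a short corrector design curve realizing the opposite functional. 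Checking that this correction preserves positivity of the weights and the length bound is the delicate technical heart of the argument.
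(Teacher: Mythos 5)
Your induction skeleton and the length recursion $\ell_d \lesssim t^{d-1}\ell_{d-1}$ (hence $\lesssim t^{d(d-1)/2}$) match the paper's arithmetic, but the decomposition you chose creates an obstacle that the paper's construction is specifically designed to avoid, and your proposed repairs do not close it. The paper does not slice $\S^d$ into latitudes of a fixed axis. Instead, around each point $x$ of a Bondarenko--Radchenko--Viazovska $t$-design $X_t\subset\S^d$ it places the image under $\phi_{x,r}$ of a $(d-1)$-dimensional design curve on the cap boundary $\partial B_r(x)$, with $r=2\rho_t\asymp t^{-1}$, and uses Samko's identity (Lemma~\ref{lemma:Samko analog}): $\int_{\partial B_r(x)} f = c_{d,k}(r)\,f(x)$ for $f\in\mathcal{H}^d_k$. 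Summing over $x\in X_t$ gives $c_{d,k}(r)\,|X_t|^{-1}\sum_{x} f(x) = c_{d,k}(r)\int_{\S^d} f = 0$ for $k\geq 1$, so the unknown constants are harmless and the resulting cycle is \emph{exactly} a $t$-design. Crucially, since $r$ is twice the covering radius, neighboring cap boundaries intersect, and connectivity is obtained combinatorially (spanning tree, Lemma~\ref{topo} and Lemma~\ref{crux2}(iv) to force intersection points, then an Euler-cycle argument as in Theorem~\ref{thm:t-design}); the single closed curve consists of the exact pieces and nothing else — no connecting arcs, no defect to cancel.

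In your construction the latitude curves lie in the disjoint hyperplanes $\{x_{d+1}=s_k\}$, so meridian bridges are unavoidable, and their contribution $f\mapsto\int_{\text{bridges}} f$ is a genuine error that must vanish \emph{exactly}; this is where the proposal breaks. Antipodal symmetry kills only odd degrees. Your first fix, perturbing the Gauss--Gegenbauer weights, cannot work: the latitude functionals only see the zonal averages $I(s_k)$, and for a non-zonal $f\in\mathcal{H}^d_k$ (so $I\equiv 0$ and $\int_{\S^d} f=0$) the bridge integral is in general nonzero, so no choice of latitude weights cancels it. Your second fix, a ``corrector design curve realizing the opposite functional,'' is impossible as literally stated — line integrals against arc length are positive functionals, so no curve realizes $f\mapsto -\int_{\text{bridges}} f$; what it actually requires is completing the bridge arcs to a $t$-design cycle, i.e.\ a completion theorem for design curves, which you do not prove and which is essentially of the same difficulty as the theorem itself. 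Finally, your flexibility lemma is justified incorrectly: the set of lengths $\{ma+nb:\ m,n\in\N\}$ generated by two incommensurable designs is countable and therefore never fills an interval $[\ell_0,\infty)$; exact length matching would need a continuous family of design curves (such as the paper's variable-radius cap construction), not a semigroup argument. So the skeleton is reasonable, but the exactness of the quadrature — the defining property of a design — is not established.
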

This asymptotic order does not match our lower bounds for $d\geq 3$ in Theorem~\ref{tint1}. In the analogous problem of $t$-design points, our Theorem~\ref{tint3} 
corresponds to the upper bounds of Korevaar and Meyers~\cite{korevaar93}
from 1993. It remains an interesting challenge to derive the existence
of $t$-design curves on $\S ^d$ that match the bounds of
Theorem~\ref{tint1}.

The existence theorems are constructive only in part, as they are
based on the non-constructive results of Bondarenko, Radchenko and
Viazovska~\cite{Bondarenko:2011kx}. However, when starting with  a spherical
$t$-design, our construction  shows that every $t$-design
curve may be chosen to be a closed, piecewise smooth curve that
consists of arcs of Euclidean circles  (by a circle in $\S
^d$ we mean a circle in  the intersection of a  $2$-dimensional
subspace of $\R ^{d+1}$ with $\S ^d$).

As a small contribution to Problem $(D)$  we will discuss some  explicit
constructions of smooth $t$-design curves for very low polynomial  degrees
($t=1,2,3$). Explicit constructions of $t$-design points and curves
remain a difficult problem with many open threads. 

\vspace{3mm}

\textbf{Mobile sampling.} Mobile sampling refers to the approximation
or reconstruction of a function from its values along a
curve~\cite{Unnikrishnan:2013aa,Unnikrishnan:2013df}. The rationale
for this mode of data acquisition is the  small number of required
sensors. Sampling a function  along a curve requires only one sensor, 
whereas the 
sampling at a point set, e.g., $t$-design points, requires many
sensors. In engineering applications, it is   natural to
assume that the function $f$  to be sampled is 
bandlimited on $\R ^d$, i.e., the support of  the Fourier transform
$\hat{f}$ is compact.

Transferred to the sphere $\S ^d$, a function on the
sphere is bandlimited, if it is a polynomial restricted to the sphere. Its degree is a measure for the bandwidth. A typical and natural scenario for mobile sampling on the
sphere would be the surveillance of meteorological or geophysical data
along airplane routes. The goal would be to reconstruct the complete
data globally, which means literally on the entire ``globe'', i.e.,
$\S ^2$. The connection between  $t$-design curves and mobile sampling
on the sphere is explained in the
following statement.

\begin{corollary} \label{corintro}
 Let $\gamma $ be a $2t$-design curve on $\S ^d$ and $f$ a polynomial
 of degree $t$. Then
  \begin{equation}
   \label{eq:int4}
   \frac{1}{\ell (\gamma )} \int _\gamma |f|^2 = \int _{\S ^d} |f|^2
   \, .
 \end{equation}
Furthermore,  $f$ is uniquely determined by its
 values along  $\gamma $. 
\end{corollary}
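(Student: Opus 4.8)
The plan is to reduce both assertions to the defining quadrature identity \eqref{eq:int1} for a $2t$-design curve, applied to a single well-chosen polynomial of degree $2t$. The key observation is that squaring doubles the degree: if $f$ is a (possibly complex-valued) polynomial of degree $t$, write $f = u + \ii v$ with real polynomials $u, v \in \Pi_t$; then $|f|^2 = u^2 + v^2$ is a \emph{real} polynomial lying in $\Pi_{2t}$. For the first assertion I would simply insert $|f|^2 \in \Pi_{2t}$ into the $2t$-design property \eqref{eq:int1} (with $t$ replaced by $2t$), which yields \eqref{eq:int4} immediately.

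For the uniqueness statement I would argue by linearity together with the positivity of $|f|^2$. Suppose two polynomials $f_1, f_2$ of degree $t$ agree along $\gamma$, and set $f = f_1 - f_2$, again of degree $t$. Since $f$ vanishes on $\gamma$, so does $|f|^2$, and hence the left-hand side of \eqref{eq:int4} vanishes, forcing $\int_{\S^d} |f|^2 = 0$. As $|f|^2$ is continuous and nonnegative on $\S^d$, this gives $f \equiv 0$ on $\S^d$, i.e.\ $f_1 = f_2$ as functions on the sphere. Thus the restriction of $f$ to $\gamma$ already determines it on all of $\S^d$.

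I do not expect a serious obstacle here: the proof is essentially the one observation that $|f|^2$ has degree $2t$, combined with strict positivity of the integral of a nonzero continuous nonnegative function. The only point deserving a word of care is the precise meaning of ``uniquely determined''. The conclusion is that $f$ is determined as a function on $\S^d$, not as a polynomial on $\R^{d+1}$, since distinct polynomials may restrict to the same spherical function; and the injectivity is obtained from the positivity argument applied to the quadrature identity rather than from any finer geometric property of the curve $\gamma$.
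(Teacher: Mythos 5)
Your proof is correct, and its first half coincides with the paper's: both simply observe that $|f|^2\in\Pi_{2t}$ (writing $f=u+\ii v$ with real $u,v\in\Pi_t$ if $f$ is complex-valued) and insert it into the $2t$-design identity. For the uniqueness assertion, however, you take a genuinely different route. You argue by linearity and positivity: if $f_1,f_2\in\Pi_t$ agree along $\gamma$, then $f=f_1-f_2$ vanishes on $\gamma$, so the quadrature identity forces $\int_{\S^d}|f|^2=0$, and continuity plus nonnegativity of $|f|^2$ give $f\equiv 0$ on $\S^d$. The paper instead invokes the reproducing kernel structure of $\Pi_t$ restricted to $\S^d$: for the kernel $k_x\in\Pi_t$ (written explicitly in terms of Gegenbauer polynomials), the product $fk_x$ lies in $\Pi_{2t}$ and $k_x$ is real-valued, so the $2t$-design property upgrades the reproducing identity $f(x)=\int_{\S^d}f\,k_x$ to the explicit reconstruction formula
\begin{equation*}
  f(x)=\frac{1}{\ell(\gamma)}\int_{\gamma} f\,k_x\,,\qquad x\in\S^d.
\end{equation*}
Your argument is more elementary, requiring no knowledge of the kernel, but it is non-constructive: it certifies injectivity of the restriction map $f\mapsto f|_{\gamma}$ without saying how to invert it. The paper's argument buys an explicit linear reconstruction operator from the samples along $\gamma$, which is exactly what the mobile-sampling application calls for. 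Your closing remark on the meaning of ``uniquely determined'' (as a function on $\S^d$, not as a polynomial on $\R^{d+1}$) is a correct precision that the paper leaves implicit.
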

Clearly, \eqref{eq:int4} follows immediately from the assumption,
because  $f \in \Pi _t$ implies that  $|f|^2 \in \Pi _{2t}$. The
uniqueness and 
an explicit  reconstruction formula will be derived in
Section~\ref{sec:Rd}.

We also discuss some elementary consequences of $t$-design curves on the sphere in
high-dimensional Euclidean quadrature. Generalized Gauss-Laguerre
quadrature combined with spherical $t$-design curves lead to exact integration of 
polynomials of total degree $t$ with respect to the measure
$\mathrm{e}^{-\|x\|}\mathrm{d}x$ on $\R^d$.

\vspace{3mm}

\textbf{Methods.}
Both Theorems~\ref{tint1} and \ref{tint2}  are based on the
existence of optimal $t$-design points. An immediate guess would be to
  connect  $t$-design points  along
geodesic arcs in $\S ^d$ and hope that a suitable order of points
would yield a $t$-design curve.  As there are $O (t^d)$ points in a
$t$-design with a distance $O (t^{-1})$ to the nearest neighbor, the
solution of the traveling salesman problem would lead to a curve of
the desired length $O (t^{d-1})$~\cite[Lemma 3]{Ehler:2019aa}. However, so far this
idea has not been fruitful,  and we do not know how such a path would
yield  exact quadrature.

Our idea is to connect the point evaluation $f\to f(x), x\in \S ^d$ to
an integral over the boundary of a spherical cap by means of a formula
of Samko~\cite{Samko:1983oa}. In $\S ^2$ the boundary of a spherical cap is a
circle and thus the union of such circles with centers at $t$-design
points yields a first quadrature rule. To generate a single closed
curve from this union of circles, we invoke some combinatorial arguments from graph
theory, such as spanning trees and Eulerian  paths.  The extension to higher dimensions is by
induction on the dimension $d$. Here we need some additional
properties from spherical geometry. 

\vspace{3mm}

\vspace{3mm}

\textbf{Outlook.} 
Spherical $t$-design points have proved a
rich field of research with deep mathematical questions. The new theory of $t$-design curves offers a similarly rich playground
for both challenging mathematics and for the investigation of
associated numerical and computational questions and applications.

On the mathematical side the most immediate question is the existence
of $t$-design curves of asymptotically optimal length in $\S^d$ for $d\geq 3$. Another
direction is the exploration of $t$-design curves on general compact
Riemannian manifolds (extending the work on $t$-design points in
\cite{Ehler:2020aa,Etayo:2016qq,Gariboldi:eu}). Theorem \ref{tint1} on
the lower bound on the length of a $t$-design curve carries over to
the manifold setting, but all constructive aspects are wide open. 

Next one might want to impose additional conditions on the curves. 
Our construction yields piecewise smooth, closed curves with  
finitely many corners and  self-intersections. For aesthetical reasons
one might want $t$-design curves to be smooth and simple, i.e.,
without corners and self-intersections. So far we know such examples
only for degrees $t\leq 3 $ on $\S ^2$.  Other aspects to be
considered might be curvature,  contractibility (or other homotopy constraints),
or  the ratio between inner and outer area for closed curves on
surfaces. At this time we are far from  understanding  any of these
questions.

  \bigskip
 The outline is as follows: In Section \ref{sec:T-design}, we introduce the concept of $t$-design curves in $\S^d$, and we derive asymptotic lower bounds on the curves' length. Smooth spherical $t$-design curves in $\S^2$ for $t=1,2,3$ are provided in Section \ref{sec:123}. Section \ref{sec:general preparation subspheres} is dedicated to some preparations for our two main theorems. The first one on the existence of asymptotically optimal $t$-design curves in $\S^2$ is derived in Section \ref{sec:sphere}. 
 The existence of $t$-design curves in $\S^d$ is proved in Section
 \ref{sec:sphere II}. In Section \ref{sec:Rd}, we briefly discuss the
 use of $t$-design curves in mobile sampling and for
 high-dimensional quadrature on $\R^d$. 

\section{From points to curves}\label{sec:T-design}
We denote the collection of classical polynomials of total degree at
most $t\in\N$ in $d+1$ variables on $\R^{d+1}$ by $\Pi_t$. Each $f\in
\Pi_t$ can be evaluated on  the unit sphere
$\S^d=\{x\in\R^{d+1}:\|x\|=1\}$. We always use the normalized surface measure, so that $\int_{\S^d} 1 = 1$. The (rotation-) invariant metric on $\S ^d$ is 
\begin{equation}\label{eq:dist def}
\dist(x,y) = \arccos \langle x,y \rangle\,.
\end{equation}
It measures the length of the geodesic arc connecting $x$ and $y$ on $\S^d$. 

\subsection{$t$-design points}
For $t\in\N$, a finite set $X\subset\S^d$ is called \emph{$t$-design
  points} (or simply $t$-design) in $\S^d$ if
\begin{equation}\label{eq:t d def}
        \frac{1}{|X|}\sum_{x\in X} f(x) = 	\int_{\S^d}
        f ,\qquad \forall f\in\Pi_t.
\end{equation}
We call $(X_t)_{t\in\N}$ a \emph{sequence of $t$-design points} in $\S^d$ if each $X_t$ is a $t$-design for $t\in\N$. 

It turns out that each sequence $(X_t)_{t\in\N}$ of $t$-design points in $\S^d$ must satisfy 
\begin{equation}\label{eq:lo b t d}
	|X_t|\gtrsim  t^d,\qquad t\in\N,
\end{equation}
where the constant may depend on $d$ but is independent of $t$, 
cf.~\cite{Breger:2016rc}. A sequence of $t$-design points $(X_t)_{t\in\N}$ in $\S^d$ is called \emph{asymptotically optimal} if 
\begin{equation*}
	|X_t|\asymp t^d, \qquad t\in\N.
\end{equation*}
Again, we allow the constant to depend on $d$. 
Asymptotically optimal point sequences do exist \cite{Bondarenko:2011kx,Ehler:2020aa,Etayo:2016qq,Gariboldi:eu}.

\subsection{$t$-design curves}\label{sec:T-design curves}
We now introduce a new concept by switching from points to a curve. By
a curve we mean a  continuous, piecewise differentiable  function
$\gamma:[0,1]\rightarrow \S^d$ with at most finitely many
self-intersections. Since the sphere is a closed manifold, we only consider closed curves.  

We may interpret $\gamma$ as a space curve in $\R^{d+1}$, so that its length is 
$$
\ell(\gamma)=\int_0^1 \|\dot{\gamma}(s)\|\mathrm{d}s,
$$
where the speed   $\|\dot{\gamma}\|$ of the curve  is defined almost
everywhere. The line integral is   
\begin{equation*}
	\int_{\gamma} f = \int_0^1 f(\gamma (s))\|\dot{\gamma}(s)\|\mathrm{d}s,
\end{equation*}
so that $\ell(\gamma)=\int_\gamma 1$. Note that $\int _\gamma f $ does
not depend on the parametrization and orientation of the curve.

In analogy  to \eqref{eq:t d def}, we now introduce $t$-design curves. 
\begin{definition}\label{def:design curve}
	For $t\in\N$, we say that $\gamma$ is a \emph{$t$-design curve} in $\S^d$ if 
	\begin{equation} \label{mhmh}
	\frac{1}{\ell(\gamma)}\int_{\gamma} f = \int_{\S^d}
        f ,\qquad f\in\Pi_t. 
	\end{equation}
 A sequence of curves $(\gamma_t)_{t\in\N}$ is called a sequence 
	of \emph{$t$-design curves} in $\S^d$ if each $\gamma_t$ is a $t$-design for $t\in\N$. 
\end{definition}
Analogously to \eqref{eq:lo b t d}, one now expects lower asymptotic bounds on $\ell(\gamma_t)$, see also \cite[Theorem 3 in Section 5]{Ehler:2019aa}. The following is Theorem~\ref{tint1} of the Introduction.
\begin{thm}\label{thm:t-design length}
If $(\gamma_t)_{t\in\N}$ a sequence of $t$-design curves in $\S^d$, then  
	\begin{equation}\label{eq:asympt lower bound curve}
		\ell(\gamma_t) \gtrsim t^{d-1},\quad t\in\N.
	\end{equation}
\end{thm}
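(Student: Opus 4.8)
The plan is to avoid any geometric covering argument --- which, because polynomials have only polynomially decaying tails, seems to lose a power of $t$ --- and instead to run a Landau-type density argument on the curve itself, combining the quadrature hypothesis with the $L^2$-Bernstein inequality on $\S^d$. Throughout set $s=\lfloor t/2\rfloor$ and parametrize $\gamma$ by arc length, so that $\gamma:[0,\ell(\gamma)]\to\S^d$ with $\|\dot\gamma\|\equiv1$.

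First I would record two instances of the hypothesis. For $P\in\Pi_s$ both $|P|^2$ and the squared tangential gradient $|\nabla_{\S^d}P|^2$ are restrictions to $\S^d$ of polynomials of degree $\le 2s\le t$ (indeed $|\nabla_{\S^d}P|^2=|\nabla\tilde P|^2-\langle x,\nabla\tilde P\rangle^2$ for an ambient extension $\tilde P$). Hence the design identity applies to both and gives
\[\int_\gamma|P|^2=\ell(\gamma)\int_{\S^d}|P|^2,\qquad \int_\gamma|\nabla_{\S^d}P|^2=\ell(\gamma)\int_{\S^d}|\nabla_{\S^d}P|^2.\]
The first identity says that the restriction map $R:P\mapsto P\circ\gamma$ is, up to the factor $\sqrt{\ell(\gamma)}$, an isometry of $\bigoplus_{k\le s}\mathcal H_k=\Pi_s|_{\S^d}$ into $L^2(0,\ell(\gamma))$; in particular $R$ is injective, so its image $V:=\{P\circ\gamma:P\in\Pi_s\}$ has $\dim V=\dim\Pi_s|_{\S^d}\asymp s^d\asymp t^d$.

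Next I would turn the second identity into a uniform Bernstein inequality for the one-dimensional functions $g=P\circ\gamma\in V$. Since $\dot\gamma(u)\perp\gamma(u)$ is a unit tangent vector, $g'(u)=\langle\nabla_{\S^d}P(\gamma(u)),\dot\gamma(u)\rangle$ and $|g'(u)|\le|\nabla_{\S^d}P(\gamma(u))|$. Combining this pointwise bound with the two displayed identities and the spectral Bernstein inequality $\int_{\S^d}|\nabla_{\S^d}P|^2=\sum_{k\le s}k(k+d-1)\|P_k\|_2^2\le s(s+d-1)\int_{\S^d}|P|^2$ yields
\[\int_0^{\ell(\gamma)}|g'|^2\le\int_\gamma|\nabla_{\S^d}P|^2=\ell(\gamma)\int_{\S^d}|\nabla_{\S^d}P|^2\le s(s+d-1)\,\ell(\gamma)\int_{\S^d}|P|^2=s(s+d-1)\int_0^{\ell(\gamma)}|g|^2.\]
Thus every $g\in V$ satisfies $\|g'\|_2\le B\|g\|_2$ on $[0,\ell(\gamma)]$ with $B=\sqrt{s(s+d-1)}\asymp t$. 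Finally I would invoke a dimension bound for subspaces obeying such a one-dimensional Bernstein inequality: if $V\subseteq H^1(0,L)$ satisfies $\|g'\|_2\le B\|g\|_2$ for all $g\in V$, then $\dim V\le BL/\pi+1$. This follows from Courant--Fischer applied to the Neumann Laplacian on $(0,L)$, whose eigenvalues are $(\pi k/L)^2$: every $m$-dimensional subspace contains a function with Rayleigh quotient at least $(\pi(m-1)/L)^2$, forcing $(\pi(m-1)/L)^2\le B^2$. Applying this with $L=\ell(\gamma)$ and $B\asymp t$ gives $t^d\asymp\dim V\lesssim t\,\ell(\gamma)+1$, that is $\ell(\gamma)\gtrsim t^{d-1}$.

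I expect the main obstacle to be the clean justification of the last (dimension-counting) step, in particular checking that it tolerates a merely piecewise-smooth, possibly self-intersecting curve: one must argue that $g=P\circ\gamma$ lies in $H^1(0,\ell(\gamma))$ (it is continuous and piecewise $C^1$, hence absolutely continuous with $g'\in L^2$), and that self-intersections are harmless because the whole argument is carried out for the parametrization on $[0,\ell(\gamma)]$ rather than for the image set. A secondary point to verify carefully is the degree bookkeeping ensuring $|\nabla_{\S^d}P|^2\in\Pi_t$, which is exactly what forces the choice $s=\lfloor t/2\rfloor$.
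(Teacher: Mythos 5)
Your proof is correct, and it takes a genuinely different route from the paper's. The paper argues geometrically: it first shows that the covering radius $\rho_t$ of the trajectory satisfies $\rho_t\lesssim t^{-1}$, by testing the quadrature identity against a smooth bump function supported in a ball disjoint from the curve (from \cite{Breger:2016rc}) together with the quadrature-error estimate of \cite{Brandolini:2014oz}, and then converts this covering bound into a length bound by a packing (Gilbert--Varshamov) argument: the curve must visit $\gtrsim\rho_t^{-d}$ well-separated balls, and each visit costs length $\gtrsim\rho_t$. Your argument is instead a Landau-type dimension count along the curve: the design identity applied to $|P|^2$ and to $|\nabla_{\S^d}P|^2$ (both of degree $\le 2s\le t$, which is exactly what forces $s=\lfloor t/2\rfloor$) makes restriction to the arc-length parametrization an $L^2$ isometry onto a subspace $V\subseteq H^1(0,\ell(\gamma))$ with $\dim V\asymp t^d$, all of whose elements satisfy a Bernstein inequality with constant $B\asymp t$, and Courant--Fischer for the Neumann Laplacian on the interval caps $\dim V$ by $B\ell(\gamma)/\pi+1$. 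Your individual steps hold up: $|\nabla_{\S^d}P|^2=|\nabla\tilde P|^2-\langle x,\nabla\tilde P\rangle^2$ is indeed of degree $\le 2s$ on the sphere; $\int_{\S^d}|\nabla_{\S^d}P|^2=\sum_{k\le s}k(k+d-1)\|P_k\|_2^2\le s(s+d-1)\int_{\S^d}|P|^2$ by integration by parts on the closed manifold; $g=P\circ\gamma$ is Lipschitz in the arc-length parametrization, so the a.e.\ chain rule and $\dot\gamma\perp\gamma$ give $|g'|\le|\nabla_{\S^d}P\circ\gamma|$ a.e.; and the min--max comparison with the Neumann eigenvalues $(\pi k/L)^2$ is standard, with self-intersections harmless precisely because everything is phrased on $[0,\ell(\gamma)]$. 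As for what each approach buys: the paper's route produces the covering-radius estimate $\rho_t\lesssim t^{-1}$ as a by-product (a variant of which is reused in its later constructions) and carries over verbatim to general compact Riemannian manifolds, since it never needs a product of band-limited functions to be band-limited; your route is more self-contained --- no external bump-function or quadrature-error machinery --- but it exploits the algebra structure of polynomials (closure of $\Pi_s$ under multiplication), so it is tied to the sphere or to algebraic-manifold settings. The only loose end is $t=1$, where $s=0$, $\dim V=1$, and your argument is vacuous; to get a constant depending only on $d$ there, note that a $1$-design curve satisfies $\frac{1}{\ell(\gamma)}\int_\gamma\langle p,\cdot\rangle=0$ for $p=\gamma(0)$, while every point of the curve lies within geodesic distance $\ell(\gamma)/2$ of $p$, so $\cos\bigl(\min(\ell(\gamma)/2,\pi)\bigr)\le 0$, forcing $\ell(\gamma_1)\ge\pi$.
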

\begin{proof}
  We use some results from ~\cite{Breger:2016rc}
  and~\cite{Brandolini:2014oz}. Let $\Gamma_t=\gamma_t([0,1])$ be the trajectory of $\gamma_t$. The covering radius $\rho _t$ of $\Gamma_t$ is defined as  
	\begin{equation}\label{eq:cov rad}
		\rho_t:=\sup_{x\in\S^d}\left(\inf_{y\in\Gamma_t}\dist(x,y)\right).
	\end{equation}
By this definition there is $x\in\S^d$ such that the closed ball $B_{\rho_t/2}(x)=\{y\in\S^d:\dist(x,y)\leq \frac{\rho_t}{2}\}$ of radius $\rho_t/2$ centered at $x$ does not intersect $\Gamma_t$, i.e., 
\begin{equation}\label{eq:00987}
B_{\rho_t/2}(x)\cap \Gamma_t = \emptyset.
\end{equation}
Let us denote the Laplace-Beltrami operator on the sphere $\S^d$ by $\Delta$ and the identity operator by $I$. 
According to \cite[Lemma 5.2 with $s=2d$ and $p=1$]{Breger:2016rc},
see also \cite{Brandolini:2014oz} for the original idea, there is a
function $f_t$ supported on  $B_{\rho_t/2}(x)$ such that 
\begin{equation}\label{eq:123456}
\|(I-\Delta)^d f_t\|_{L^1(\S^d)} \lesssim 1,\qquad \text{ and } \qquad  \rho_t^{2d}\lesssim \int_{\S^d} f_t.
\end{equation}
Since \eqref{eq:00987} implies $\int_{\gamma_t} f_t = 0$, the
$t$-design assumption and  \cite[Theorem 2.12]{Brandolini:2014oz} lead to
\begin{equation}\label{eq:654321}
\left|\int_{\S^d} f_t\right|
\lesssim t^{-2d} \|(I-\Delta)^d f_t\|_{L^1} . 
\end{equation}
By combining \eqref{eq:654321} with \eqref{eq:123456}, we deduce $\rho_t^{2d}\lesssim  t^{-2d}$, so that 
	\begin{equation}\label{eq:rho t etc}
		\rho_t\lesssim  t^{-1}.
	\end{equation}

To relate $\rho_t$ with $\ell(\gamma_t)$, we apply a packing
        argument.  Let $n$ be the maximum number of disjoint balls of
        radius $2\rho _t$ in $\S^d$, i.e., $B_{2\rho _t}(x_j) \cap
        B_{2\rho _t}(x_k) = \emptyset $ for $j,k= 1, \dots ,
        n$ with $j\neq k$. Then the balls $B_{4\rho _t}(x_j)$ cover $\S^d$, otherwise
        there is $x\in \S^d$, such that $x\not \in \bigcup _{j=1}^n
        B_{4\rho _t}(x_j)$ and $B_{2\rho _t}(x)$ is disjoint from all
        $B_{2\rho _t}(x_j)$ contradicting the maximality of $n$. 

        We note that every  ball $B_r(x)$ in $\S^d $ 
	with  radius $0<r\leq 1$
        has volume  $\vol(B_r(x))\asymp
        r^d$. Consequently, 
        \begin{equation*}
        1  \leq \sum _{j=1}^n
        \vol(B_{4\rho _t}(x_j)) \lesssim n \rho _t ^d,
                \end{equation*}
        so that we obtain
	\begin{equation*}
		n\gtrsim \rho_t^{-d},
	\end{equation*}
 This is known as the  Gilbert-Varshamov bound in coding theory,
 cf.~\cite{Barg:2002aa}.

Since $B_{2\rho _t}(x_j) \cap
        B_{2\rho _t}(x_k) = \emptyset $,  the distance between two distinct balls
 $B_{\rho_t}(x_j)$ and $B_{\rho_t}(x_k)$ is at least $2\rho_t$.
Due  to the definition of the covering radius $\rho_t$ and the compactness of $\Gamma_t$ and $\S^d$, the
 trajectory $\Gamma_t$  intersects each ball $B_{\rho_t}(x_k)$.  
 Therefore, the length of $\gamma _t$  must satisfy
	\begin{equation}\label{eq:rho 00}
		\ell(\gamma_t) \gtrsim n \rho_t \gtrsim \rho_t^{1-d}.
	\end{equation}

      Since \eqref{eq:rho t etc} is equivalent to $\rho^{-1}_t \gtrsim  t$, the bound \eqref{eq:rho 00} leads to
	\begin{equation*}
		\ell(\gamma_t) \gtrsim \rho_t^{1-d} \gtrsim t^{d-1}.\qedhere
	\end{equation*}
      \end{proof}
The lower bound on the  length of $\gamma _t$  leads to the concept of asymptotic optimality for curves.
\begin{definition}\label{def:as curve}
	A sequence $(\gamma_t)_{t\in\N}$ of $t$-design curves in $\S^d$ is called \emph{asymptotically optimal} if 
	\begin{equation*}
		\ell(\gamma_t)\asymp t^{d-1}, \qquad t\in\N.
	\end{equation*}
\end{definition}

In the remainder of the paper we study the existence of $t$-design  curves on the
sphere $\S ^d$.

\section{Some spherical $t$-design curves for small $t$ in $\S^2$}\label{sec:123}
Here we construct smooth $t$-design curves in $\S^2$ for $t=1,2,3$. For $1\leq k\in \N$ and $a\in [0,1]$, consider the family of curves $\gamma^{(k,a)}:[0,1]\rightarrow\S^2$ given by 
\begin{equation}\label{eq:gamma curve 0}
	\gamma^{(k,a)}(s):=
		\begin{pmatrix}
			a\cos(2\pi s)+(1-a)\cos(2\pi (2k-1) s)\\
			a\sin(2\pi s)-(1-a)\sin(2\pi (2k-1) s)\\
			2\sqrt{a(1-a)}\sin(2\pi k s)
		\end{pmatrix}.
\end{equation}
If $k=1$, then $\gamma^{(k,a)}$ describes a great circle, and it is easy to see that every great circle in $\S^d$ yields a $1$-design. 
The family $\gamma^{(k,a)}$ also gives rise to spherical $2$-design and $3$-design curves:
\begin{proposition}\label{prop:curve example}
	The curves  $\gamma^{(k,a)}$ have the following properties: 
	\begin{itemize}
		\item[(i)] $\gamma^{(k,a)}$ is a $1$-design
                  curve for all $k\in \N $. 
		\item[(ii)] There is $a_2\in(\frac{1}{2},1)$ such that $\gamma^{(2,a_2)}$ is a $2$-design curve.
		\item[(iii)] For $k\geq 3$, there is $a_k\in(\frac{1}{2},1)$ such that $\gamma^{(k,a_k)}$ is a $3$-design curve.
	\end{itemize}
\end{proposition}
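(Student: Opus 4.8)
The plan is to pass to the complex equatorial coordinate $w=x+\ii y$, in which $\gamma^{(k,a)}$ takes the compact form $w(s)=a\,\eip{s}+(1-a)\,\eim{(2k-1)s}$ and $z(s)=-\ii\sqrt{a(1-a)}\big(\eip{ks}-\eim{ks}\big)$. A one-line computation with $a+(1-a)=1$ gives $|w|^2+z^2=1$, confirming that $\gamma^{(k,a)}$ maps into $\S^2$. The first real step is to record the speed: differentiating and collecting terms yields $\|\dot\gamma^{(k,a)}(s)\|^2=(2\pi)^2\big(A+B\cos(4\pi ks)\big)$ with $A=a^2+(1-a)^2(2k-1)^2+2k^2a(1-a)$ and $B=2a(1-a)(k-1)^2$, and similarly $z(s)^2=2a(1-a)\big(1-\cos(4\pi ks)\big)$. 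The decisive structural fact is that $\|\dot\gamma\|$ depends on $s$ only through $\cos(4\pi ks)$; being $\tfrac1{2k}$-periodic, its Fourier spectrum lies in $2k\Z$.

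Since \eqref{mhmh} is linear, it suffices to test it on the monomials $w^p\bar w^q z^c$ with $p+q+c\le t$ (these span the degree-$\le t$ polynomials, as $x=(w+\bar w)/2$, $y=(w-\bar w)/2\ii$). The key lemma is a frequency count: the two exponentials in $w$ have frequencies $1$ and $-(2k-1)$, both $\equiv 1\pmod{2k}$, and likewise every frequency of $\bar w$ is $\equiv-1$ and every frequency of $z$ is $\equiv k\pmod{2k}$; hence every frequency in $w^p\bar w^q z^c$ is $\equiv p-q+ck\pmod{2k}$. As $\|\dot\gamma\|$ has spectrum in $2k\Z$, the product $w^p\bar w^q z^c\,\|\dot\gamma\|$ then has no zero-frequency component, so $\int_{\gamma}w^p\bar w^q z^c=0$ whenever $p-q+ck\not\equiv 0\pmod{2k}$. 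Now sweep the degrees. In degree $1$ the residues of $w,\bar w,z$ are $1,-1,k$, none divisible by $2k$ for any $k\ge1$, so all three integrals vanish and (i) follows. In degree $2$ only $w\bar w$ (residue $0$) and $z^2$ (residue $2k$) survive; from $w\bar w+z^2=1$ on $\S^2$ we get $\int_\gamma w\bar w+\int_\gamma z^2=\ell(\gamma)$, so matching $\int_{\S^2}z^2=\tfrac13$ collapses the whole degree-$2$ requirement to the single scalar equation $\int_\gamma z^2=\tfrac13\,\ell(\gamma)$. In degree $3$ one checks the ten exponent triples: all residues $p-q+ck$ are $\not\equiv0\pmod{2k}$ exactly when $k\ge3$, the borderline cases being $(2,0,1),(0,2,1)$ with residues $k\pm2$ — precisely the obstruction that prevents $k=2$ from being a $3$-design. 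Since every odd monomial integrates to $0$ over $\S^2$, the degree-$3$ requirement is automatic once $k\ge3$, and again everything reduces to $\int_\gamma z^2=\tfrac13\,\ell(\gamma)$.

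It remains to solve this one equation in $a$. Put $h(a):=\int_\gamma(3z^2-1)$; inserting the formulas for $z^2$ and $\|\dot\gamma\|$ and using $\int_0^1 F(\cos 4\pi ks)\,\mathrm ds=\langle F\rangle:=\tfrac1{2\pi}\int_0^{2\pi}F(\cos\phi)\,\mathrm d\phi$ gives $h(a)=2\pi\,\langle\,(6a(1-a)(1-\cos\phi)-1)\sqrt{A+B\cos\phi}\,\rangle$. At $a=1$ the curve degenerates to the equator ($B=0$, $A=1$, $z\equiv0$), so $h(1)=-2\pi<0$. At $a=\tfrac12$ one finds $h(\tfrac12)=2\pi\sqrt{A}\,\big(\tfrac12 I_0-\tfrac32 I_1\big)$, where $I_0=\langle\sqrt{1+\beta\cos\phi}\rangle$, $I_1=\langle\cos\phi\,\sqrt{1+\beta\cos\phi}\rangle$ and $\beta:=B/A$. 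Granting $h(\tfrac12)>0$ together with the continuity of $a\mapsto h(a)$ on $[\tfrac12,1]$, the intermediate value theorem yields $a_k\in(\tfrac12,1)$ with $h(a_k)=0$, completing (ii) for $k=2$ and (iii) for $k\ge3$.

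The single delicate point is the sign $h(\tfrac12)>0$, i.e. $\tfrac12 I_0>\tfrac32 I_1$, because $I_0,I_1$ are complete elliptic integrals and the relevant parameter $\beta=(k-1)^2/(3k^2-2k+1)$ grows from $\tfrac19$ at $k=2$ up towards the supremum $\tfrac13$, so it stays $\le\tfrac13$ but is not small for large $k$ and a plain Taylor expansion will not do uniformly. I would settle it with two crude estimates valid on $0\le\beta\le\tfrac13$: since $\cos\phi\ge-1$ we have $I_0\ge\sqrt{1-\beta}$, while writing $I_1=\langle\cos\phi(\sqrt{1+\beta\cos\phi}-1)\rangle$ (legitimate as $\langle\cos\phi\rangle=0$) and $\sqrt{1+\beta\cos\phi}-1=\beta\cos\phi/(1+\sqrt{1+\beta\cos\phi})$ gives $I_1=\langle\beta\cos^2\phi/(1+\sqrt{1+\beta\cos\phi})\rangle\le\beta\langle\cos^2\phi\rangle=\tfrac\beta2$. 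Hence $\tfrac12 I_0-\tfrac32 I_1\ge\tfrac12\sqrt{1-\beta}-\tfrac{3\beta}4$, which is decreasing in $\beta$ and still equals $\tfrac12\sqrt{2/3}-\tfrac14>0$ at $\beta=\tfrac13$. Everything else is bookkeeping of frequencies.
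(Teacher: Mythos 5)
Your proposal is correct and follows essentially the same route as the paper's own proof: the same key structural fact that the speed $\|\dot\gamma^{(k,a)}\|$ has Fourier spectrum in $2k\Z$ (the paper's Lemma~\ref{lemma:derivative of gamma}), the same frequency count modulo $2k$ showing that every relevant monomial except $x^2+y^2$ and $z^2$ integrates to zero along the curve (with the same obstruction $k\pm 2$ explaining why $k=2$ only gives a $2$-design), the same reduction of the whole problem to the single scalar equation $\frac{1}{\ell(\gamma)}\int_\gamma z^2=\frac13$, and the same intermediate value argument in $a$ between $\frac12$ and $1$. The only divergence is in the final elementary estimate at $a=\frac12$: your bounds $I_0\geq\sqrt{1-\beta}$ and $I_1\leq\beta/2$ replace the paper's bounds $c^{(k,1/2)}_0\geq 2\pi k$ and $\tfrac12 c^{(k,1/2)}_1\leq(\sqrt{8}-2)k$, but both establish the identical inequality $\tfrac12 c_1/c_0<\tfrac13$, so this is a cosmetic rather than a substantive difference.
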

To verify Proposition \ref{prop:curve example}, recall that the surface  measure on $\S^2$ is normalized, so that  $\int_{\S^2} 1
=1$. Due to the sphere's symmetries, integrals over the sphere of every monomial of odd degree vanish. Moreover, when $x,y,z$ denote the coordinate functions in $\R^3$, we have 
\begin{align}
	0 &=\int_{\S^2}xy=\int_{\S^2}xz=\int_{\S^2}yz,\label{eq:degree 1}\\
	\frac{1}{3} &= \int_{\S^2}x^2 = \int_{\S^2}y^2 = \int_{\S^2}z^2.\label{eq:1/3 xy}
\end{align}
By definition of length, we directly observe 
\begin{equation}\label{eq:const function is easy}
	\int_{\S^2} 1 =1= \frac{1}{\ell(\gamma^{(k,a)})}\int_0^1\|\dot{\gamma}^{(k,a)}(s)\| \mathrm{d}s =  \frac{1}{\ell(\gamma^{(k,a)})}\int_{\gamma^{(k,a)}} 1.
\end{equation}
To treat line integrals of the other monomials, will use the following lemma.  
\begin{lemma}\label{lemma:derivative of gamma}
	There are  real-valued coefficients $(c^{(k,a)}_n)_{n\in\N}\in
        \ell ^1(\mathbb{Z})$ such that
	\begin{equation*}
		\|\dot{\gamma}^{(k,a)}(s)\| = \sum_{n\in\N} c^{(k,a)}_n \cos(4\pi  k n s),
	\end{equation*}
	and $\gamma^{(k,a)}$ has an arc length parametrization. 
\end{lemma}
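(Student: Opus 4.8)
The plan is to reduce the whole lemma to an explicit computation of the squared speed $\|\dot\gamma^{(k,a)}(s)\|^2$, which I expect to collapse to a single cosine in the variable $4\pi k s$. First I would differentiate \eqref{eq:gamma curve 0} componentwise and abbreviate $A=2\pi a$, $B=2\pi(2k-1)(1-a)$, $C=4\pi k\sqrt{a(1-a)}$, so that $\dot\gamma_1=-A\sin(2\pi s)-B\sin(2\pi(2k-1)s)$, $\dot\gamma_2=A\cos(2\pi s)-B\cos(2\pi(2k-1)s)$ and $\dot\gamma_3=C\cos(2\pi k s)$. Squaring and adding the first two coordinates, the $A^2$ and $B^2$ contributions give the constant $A^2+B^2$, while the mixed terms combine through the identity $\sin\alpha\sin\beta-\cos\alpha\cos\beta=-\cos(\alpha+\beta)$ with $\alpha=2\pi s$, $\beta=2\pi(2k-1)s$, whose sum is exactly $\alpha+\beta=4\pi k s$. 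The third coordinate contributes $C^2\cos^2(2\pi k s)=\tfrac{C^2}{2}\bigl(1+\cos(4\pi k s)\bigr)$. Collecting terms yields
\[
\|\dot\gamma^{(k,a)}(s)\|^2 = P + Q\cos(4\pi k s),
\]
with $P=A^2+B^2+\tfrac{C^2}{2}$ and $Q=\tfrac{C^2}{2}-2AB=8\pi^2 a(1-a)(k-1)^2\geq 0$.

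Second, and this is what makes the square root harmless, I would verify that $P>Q$ on the whole parameter range. A short computation reveals the clean identity $P-Q=4\pi^2\bigl(a+(2k-1)(1-a)\bigr)^2$, whose bracket is linear in $a$ and strictly positive at both endpoints $a=0$ and $a=1$, hence positive for every $a\in[0,1]$. Therefore $\|\dot\gamma^{(k,a)}(s)\|^2\geq P-Q>0$ for all $s$: the curve is regular and its speed is a strictly positive, real-analytic function of $s$.

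Third, to obtain the cosine expansion I would write $\|\dot\gamma^{(k,a)}(s)\|=\sqrt{P}\,g(4\pi k s)$ where $g(u)=\sqrt{1+(Q/P)\cos u}$ with $Q/P\in[0,1)$. The function $g$ is even and $2\pi$-periodic, and by the strict positivity $1+(Q/P)\cos u\geq 1-Q/P>0$ it is real-analytic on $\R$; hence its cosine Fourier coefficients decay geometrically and are in particular absolutely summable. Substituting $u=4\pi k s$ and setting $c^{(k,a)}_n=\sqrt{P}\,\hat g(n)$, so that $g(u)=\sum_n\hat g(n)\cos(nu)$, gives the asserted representation $\|\dot\gamma^{(k,a)}(s)\|=\sum_{n\in\N}c^{(k,a)}_n\cos(4\pi k n s)$ with $(c^{(k,a)}_n)\in\ell^1$. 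Finally, regularity yields the arc length parametrization: $L(s)=\int_0^s\|\dot\gamma^{(k,a)}(\sigma)\|\,\mathrm d\sigma$ is a smooth, strictly increasing bijection of $[0,1]$ onto $[0,\ell(\gamma^{(k,a)})]$, and $\gamma^{(k,a)}\circ L^{-1}$ reparametrizes the curve by arc length.

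The one genuinely delicate step is the trigonometric collapse in the first paragraph: it is the small miracle that the frequencies $1$ and $2k-1$ of the first two coordinates, together with the frequency $k$ of the third, conspire to leave only the single frequency $4\pi k s$ in the squared speed, which is precisely what forces $\|\dot\gamma^{(k,a)}\|$ to be a cosine series in the harmonics $4\pi k n\,s$. Once $\|\dot\gamma^{(k,a)}\|^2=P+Q\cos(4\pi k s)$ with $P>Q$ is in hand, the rest — the perfect-square identity ensuring regularity, the analyticity of the square root, and the $\ell^1$ decay of the Fourier coefficients — is routine.
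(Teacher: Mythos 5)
Your proposal is correct, and its skeleton coincides with the paper's proof: you compute the squared speed, collapse it to $P+Q\cos(4\pi k s)$ (your $P$ and $Q$ are exactly the paper's constants $\alpha^{(k,a)}$ and $\beta^{(k,a)}$), deduce strict positivity of the speed from $P-Q>0$, and then expand the square root in the harmonics $\cos(4\pi k n s)$ using evenness and $1/(2k)$-periodicity; positivity of the speed also gives the arc length parametrization, as in the paper. Two steps differ, both mildly in your favor. First, you recognize $P-Q$ as the perfect square $4\pi^2\bigl(a+(2k-1)(1-a)\bigr)^2$, whereas the paper states the same quantity \eqref{eq:ab} in expanded form and simply asserts its positivity; your version makes the positivity check transparent (a linear bracket, positive at both endpoints). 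Second, for the $\ell^1$ claim the paper invokes the Wiener--L\'evy theorem (the squared speed is a positive trigonometric polynomial and $\sqrt{\cdot}$ is analytic on its range, so the square root again has an absolutely convergent Fourier series), while you observe that $g(u)=\sqrt{1+(Q/P)\cos u}$ with $Q/P\in[0,1)$ is positive, even, $2\pi$-periodic and real-analytic, so its Fourier cosine coefficients decay geometrically. Your route is more elementary and yields a stronger conclusion (exponential decay rather than mere absolute summability); the Wiener--L\'evy argument would still work in situations where the speed is only in the Wiener algebra rather than analytic, but that generality is not needed here. You also make explicit the evenness needed to exclude sine terms, which the paper leaves implicit.
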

\begin{proof}[Proof of Lemma \ref{lemma:derivative of gamma}]
The arc length parametrization exists whenever  $\|\dot{\gamma}^{(k,a)}(s)\|$
is positive. Indeed, an elementary  calculation reveals that  
\begin{equation}\label{eq:uni}
	\|\dot{\gamma}^{(k,a)}(s)\|^2 = \alpha^{(k,a)}+\beta^{(k,a)}\cos(4\pi  k s)
\end{equation}
with the  nonnegative constants 
\begin{align}
	\alpha^{(k,a)} & = 4\pi^2\left((2k-1)^2 -2a(3k^2-4k+1) + 2a^2(k-1)^2\right) \label{eq:alpha and beta 1} \\
    & = 4\pi^2\left(a^2 + (2k-1)^2 (1-a)^2 + 2k^2
      a(1-a)\right) \label{eq:alpha and beta 1a} \\	\beta^{(k,a)}
                       & = 8\pi^2 a(1-a)(k-1)^2.\label{eq:alpha and
                         beta 2} 
\end{align}
Their difference 
\begin{equation}\label{eq:ab}
	\alpha^{(k,a)}-\beta^{(k,a)}=4\pi^2\Big(a^2 + (2k-1)^2 (1-a)^2
        + 2a(1-a) (2k-1)\Big) 
\end{equation}
is positive for all $a\in[0,1]$, so that also
$\|\dot{\gamma}^{(k,a)}(s)\|^2$ is positive for all $s\in\R$.  The
theorem of Wiener L\'evy implies that $s \mapsto
\|\dot{\gamma}^{(k,a)}(s)\|$ possesses an absolutely convergent
Fourier series. 
Since   $\|\dot{\gamma}^{(k,a)}(s)\|^2$ has period
$1/(2k)$, its square root has the same period, and thus 
only terms of the form $\cos(4\pi  k n s)$ appear in the Fourier
series of $\|\dot{\gamma}^{(k,a)}(s)\|$.  
\end{proof}

\begin{proof}[Proof of Proposition \ref{prop:curve example}]
(i) Exact integration of the constant function has already been checked in \eqref{eq:const function is easy}. 
Since $\gamma^{(k,a)}$ does not contain any term of the form $\cos(4 \pi k n s)$ for $n\in\N$, Lemma \ref{lemma:derivative of gamma} and the orthogonality relations of the Fourier basis imply
\begin{equation*}
	 \int_0^1\gamma^{(k,a)}(s)\|\dot{\gamma}^{(k,a)}(s)\|\mathrm{d}s
         = 0,
\end{equation*}
which matches the requirement that integrals of degree $1$ monomials vanish. 

(ii) 
For $k\geq 2$, by the definition of $\gamma ^{(k,a)}$ its  $x$ and
$y$-coordinates are trigonometric polynomials of degree $\leq 2k-1$
and the $z$-coordinate is a  trigonometric polynomial of degree $k$,
consequently the product of two distinct components of
$\gamma^{(k,a)}$  is a trigonometric polynomial of degree at most
$4k-2$ without a constant term. Therefore, the products $xy, xz,yz$ 
do not contain any term of the form $\cos(4 \pi n k t)$, for $n\in\N$, and we deduce 
\begin{equation*}
	0 = \int_{\gamma^{(k,a)}} x y =\int_{\gamma^{(k,a)}} x z =\int_{\gamma^{(k,a)}} y z,
\end{equation*}
which matches the identities \eqref{eq:degree 1} for the monomials of degree $2$.   

One sees directly from the definition of the coordinates of
$\gamma ^{(k,a)}$ that 
\begin{equation*}
	\int_{\gamma^{(k,a)}}x^2 = \int_{\gamma^{(k,a)}}y^2. 
\end{equation*}
Let $c^{(k,a)}_0=\ell(\gamma^{(k,a)})$ and $c^{(k,a)}_1$ be the
coefficients in Lemma \ref{lemma:derivative of gamma}. We now need to investigate
\begin{equation}\label{eq:aim rte}
	\frac{1}{\ell(\gamma^{(k,a)})} \int_{\gamma^{(k,a)}} z^2 =
        2a(1-a)\left(
          1-\frac{\frac{1}{2}c^{(k,a)}_1}{c^{(k,a)}_0}\right) =: \eta
        (a) \,.
\end{equation}
 We aim for a
parameter 
$a_k$ such that \eqref{eq:aim rte} equals $\frac{1}{3}$, but we cannot
solve this directly. For $a=0$ and $a=1$, the expression vanishes. We
will  verify that $\eta (1/2) > 1/3$. Then the continuity in $a$ and the intermediate value theorem ensure that
there is $a_k\in(\frac{1}{2},1)$ such that $\eta (a_k) = 1/3$. 

Rewriting \eqref{eq:aim rte} for $a=\frac{1}{2}$, we note that $\eta
(\frac{1}{2}) \geq \frac{1}{3}$, 
if and only if  
\begin{equation}\label{eq:nom and denom}
	\frac{\frac{1}{2}c^{(k,\frac{1}{2})}_1}{c^{(k,\frac{1}{2})}_0}\leq \frac{1}{3}\,.
\end{equation}
To obtain a lower bound on $c^{(k,\frac{1}{2})}_0$, we observe that by
\eqref{eq:ab} 
$\alpha^{(k,\frac{1}{2})} -\beta^{(k,\frac{1}{2})} = 4\pi^2 k^2$. Therefore, we have
\begin{equation*}
	\sqrt{\alpha^{(k,\frac{1}{2})}+\beta^{(k,\frac{1}{2})}\cos(4\pi
          k s)} \geq \sqrt{\alpha^{(k,\frac{1}{2})}
          -\beta^{(k,\frac{1}{2})}} \geq 2\pi k,
\end{equation*}
which leads to 
\begin{equation*}
	c^{(k,\frac{1}{2})}_0 = \int_0^1\sqrt{\alpha^{(k,\frac{1}{2})}+\beta^{(k,\frac{1}{2})}\cos(4\pi k s)}\mathrm{d}s \geq 2\pi k\,.
\end{equation*}
To obtain an upper bound on $\frac{1}{2}c^{(k,\frac{1}{2})}_1$, we
first observe that the substitution $s'=2 k s $ and periodicity lead to
\begin{align*}
	\frac{1}{2}c^{(k,\frac{1}{2})}_1 &= \int_0^1 \sqrt{\alpha^{(k,\frac{1}{2})}+\beta^{(k,\frac{1}{2})}\cos(4\pi k s)}
	\cos(4\pi k s)\mathrm{d}s\\
	& = \int_0^1 \sqrt{\alpha^{(k,\frac{1}{2})}+\beta^{(k,\frac{1}{2})}\cos(2\pi  s)}\cos(2\pi  s)\mathrm{d}s\,.
\end{align*}
We bound the positive part $\int_{-\frac{1}{4}}^{\frac{1}{4}}\ldots$ and the negative part $\int_{\frac{1}{4}}^{\frac{3}{4}}\ldots$ separately. The observation $\alpha^{(k,\frac{1}{2})} +\beta^{(k,\frac{1}{2})}=4\pi^2(2k^2-2k+1)\leq 8\pi^2k^2$ leads to
\begin{align*}
	\int_{-\frac{1}{4}}^{\frac{1}{4}}\sqrt{\alpha^{(k,\frac{1}{2})}+\beta^{(k,\frac{1}{2})}\cos(2\pi  s)}\cos(2\pi  s)\mathrm{d}s 
	& \leq \int_{-\frac{1}{4}}^{\frac{1}{4}} \sqrt{8}\pi k \cos(2\pi s)\mathrm{d}s =\sqrt{8}k\,.
\end{align*}
For the negative part, we recall $\alpha^{(k,\frac{1}{2})}
+\beta^{(k,\frac{1}{2})} \cos (2\pi s) \geq \alpha^{(k,\frac{1}{2})} -\beta^{(k,\frac{1}{2})} = 4\pi^2 k^2$ and obtain
\begin{equation*}
	\int_{\frac{1}{4}}^{\frac{3}{4}} \sqrt{\alpha^{(k,\frac{1}{2})}+\beta^{(k,\frac{1}{2})}\cos(2\pi  s)}\cos(2\pi  s)\mathrm{d}s 
	\leq \int_{\frac{1}{4}}^{\frac{3}{4}} 2\pi k \cos(2\pi s)\mathrm{d}s =-2k\,.
\end{equation*}
Combining these  bounds, we derive
\begin{equation*}
	\frac{1}{2}c^{(k,\frac{1}{2})}_1 \leq (\sqrt{8}-2)k \leq k.
\end{equation*}
Therefore, we do have 
\begin{equation*}
	\frac{\frac{1}{2}c^{(k,\frac{1}{2})}_1}{c^{(k,\frac{1}{2})}_0} \leq \frac{1}{2\pi}<\frac{1}{3}\,.
\end{equation*}
The intermediate value theorem ensures that we can match \eqref{eq:1/3 xy}.

(iii) The integrals over $\S^2$ of the
monomials of degree $3$ vanish. For $k\geq 3$, a product of $3$ components of $\gamma^{(k,a)}$ does not contain any terms of the form $\cos(4\pi k n s)$, where $n\in\N$.  Lemma \ref{lemma:derivative of gamma} implies that the integral over $\gamma^{(k,a)}$ of every degree $3$ monomial vanishes. 
\end{proof}

\begin{example}\label{ex:3t}
The values $a_2 \approx 0.7778$ and $a_3\approx 0.7660$ lead to the trajectories $\Gamma^{(2,a_2)}$ and $\Gamma^{(3,a_3)}$ depicted in Figure \ref{fig:approx 3 design}. The lengths satisfy $\ell(\gamma^{(2,a_2)})\approx 9.786$, and $\ell(\gamma^{(3,a_3)})\approx 14.232$.
\end{example}
\begin{figure}
\subfigure[$\gamma^{(2,a_2)}$]{
\includegraphics[width=.3\textwidth]{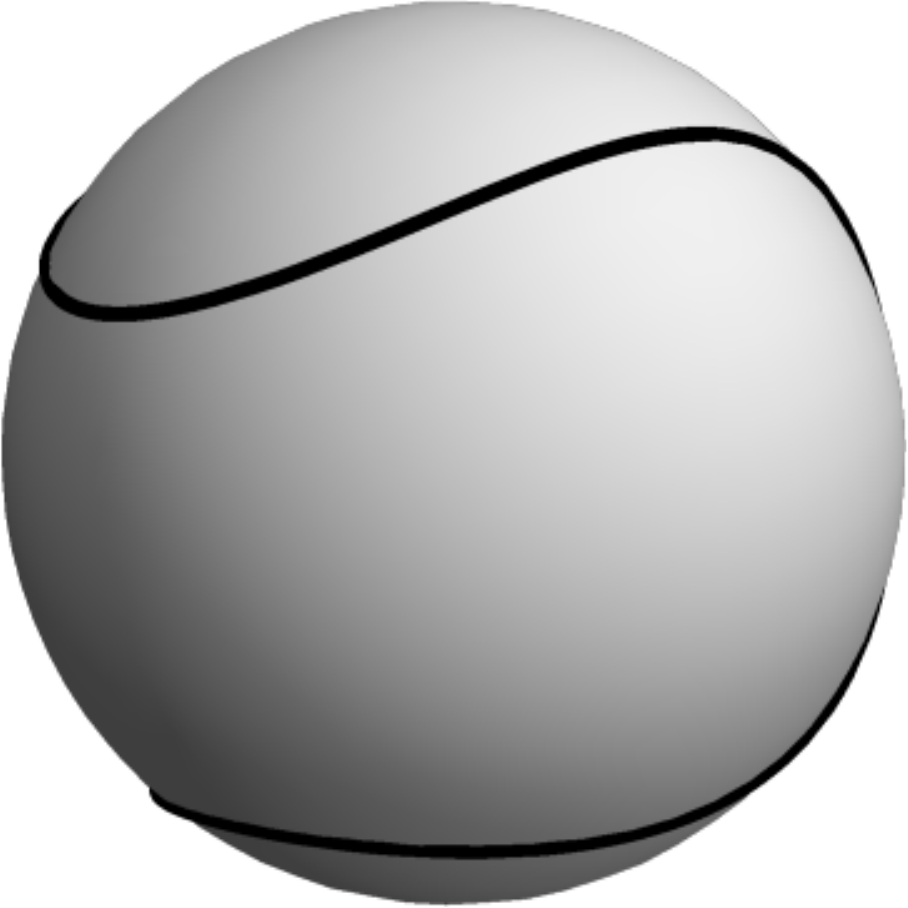}}\hspace{6ex}
\subfigure[$\gamma^{(3,a_3)}$]{
\includegraphics[width=.3\textwidth]{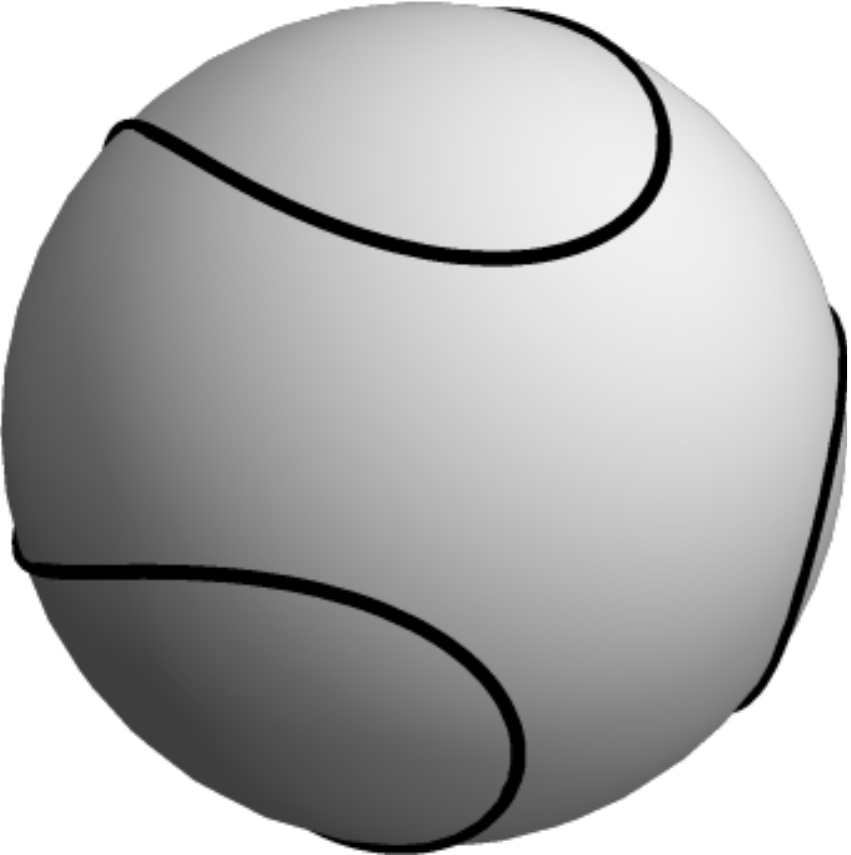}}
\caption{Curves in Example \ref{ex:3t}:  $\Gamma^{(2,a_2)}$ and
  $\gamma^{(3,a_3)}$  are smooth curves, and $\gamma ^{(2,a_2)}$ resembles the seam of a tennis ball.}\label{fig:approx 3 design}
\end{figure}

\section{Preparatory results}\label{sec:general preparation subspheres}
We will derive two preparatory results that are needed for our main
theorems in subsequent sections. The first one  is about the
connectivity of a graph associated to  a covering,  and  the second
one  is about the integration along the boundary of spherical caps.

\subsection{Connectivity of graphs associated to coverings}
The spherical cap of radius $0<r<\frac{\pi}{2}$ centered at $x\in\S^d$ is 
\begin{equation*}
	B_r(x)=\{z\in\S^d: \dist(x,z)\leq r\}.
\end{equation*}
To every  finite set $X\subseteq \S^d$   and  $r>0$  we 
associate a   graph $\mathcal{G}_{r}$ as follows:   its  vertices are the points of 
$X$. Two points  $x,y\in X$ are connected by an edge if   $B_{r}(x)\cap
B_{r}(y) \neq \emptyset$. 
\begin{lemma}\label{lemma:G connection 0}
Let $\rho$ be the covering radius of $X$. If $r\geq \rho$, then the graph $\mathcal{G}_r$ is connected.
\end{lemma}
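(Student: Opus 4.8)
The plan is to argue by contradiction using the topological connectedness of $\S^d$. Suppose $\mathcal{G}_r$ is not connected. Then the vertex set $X$ splits into two nonempty sets $A$ and $B$ with no edge joining them; by the definition of the edges of $\mathcal{G}_r$ this means precisely that $B_r(x)\cap B_r(y)=\emptyset$ for every $x\in A$ and every $y\in B$. (If there are several connected components, I simply take $A$ to be one of them and $B$ the union of the others.)

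Next I would assemble the two closed sets $U=\bigcup_{x\in A}B_r(x)$ and $V=\bigcup_{y\in B}B_r(y)$. Each is a finite union of closed spherical caps, hence closed, and both are nonempty. The absence of edges between $A$ and $B$ yields $U\cap V=\emptyset$. On the other hand, the hypothesis $r\geq\rho$ together with the definition of the covering radius forces the caps $\{B_r(x)\}_{x\in X}$ to cover the entire sphere: for each $z\in\S^d$ there is some $x\in X$ with $\dist(z,x)\leq\rho\leq r$, whence $z\in B_r(x)$. Consequently $U\cup V=\S^d$.

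But then $\S^d=U\cup V$ would exhibit the sphere as a union of two disjoint, nonempty, closed sets, contradicting the connectedness of $\S^d$ for $d\geq 1$. This contradiction shows that no such partition $A,B$ can exist, so $\mathcal{G}_r$ is connected.

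I expect no serious obstacle here; the argument is purely topological once the two elementary points are verified: first, that the caps $B_r(x)$ are closed (so that $U$ and $V$ are genuinely closed sets and the connectedness argument applies), and second, that the bound $r\geq\rho$ gives a cover of the whole sphere rather than merely of a dense subset. Both follow immediately from the definitions, after which the connectedness of $\S^d$ closes the argument.
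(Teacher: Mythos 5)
Your proof is correct, and while it rests on the same two pillars as the paper's argument---the covering property $\bigcup_{x\in X}B_r(x)=\S^d$ forced by $r\geq\rho$, and the connectedness of $\S^d$---its execution is genuinely different and in fact cleaner. The paper fixes one connected component $\mathcal{C}$ of $\mathcal{G}_r$ and shows $\bigcup_{x\in\mathcal{C}}B_r(x)=\S^d$ by a sequence argument: it takes points $y_n$ outside this union converging to a point $y$ inside it, extracts a subsequence lying in a single cap $B_r(\tilde{x})$ with $\tilde{x}\notin\mathcal{C}$, deduces $\dist(x,\tilde{x})\leq 2r$, and then needs the geometric observation that two caps of radius $r$ whose centers are at distance at most $2r$ intersect (at the geodesic midpoint) to produce a contradictory edge. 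Your partition into $U=\bigcup_{x\in A}B_r(x)$ and $V=\bigcup_{y\in B}B_r(y)$ short-circuits all of this: disjointness of $U$ and $V$ is immediate from the definition of the edges of $\mathcal{G}_r$, closedness is immediate from finiteness of $X$, and the covering property makes $\{U,V\}$ a decomposition of $\S^d$ into two nonempty disjoint closed sets, which is impossible. You thus avoid both the subsequence extraction and the midpoint argument; what each approach buys is that the paper's version localizes the contradiction at an explicit pair of caps (and so exhibits the ``missing'' edge concretely), whereas yours invokes connectedness of the sphere wholesale and is shorter. One small point you handle correctly but that deserves emphasis: finiteness of $X$ guarantees that for each $z\in\S^d$ the infimum defining the covering radius is attained, so $z$ genuinely lies in some closed cap $B_r(x)$ even in the borderline case $r=\rho$.
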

\begin{proof}
Since $r\geq \rho$, the covering property 
$$
\bigcup_{x\in X} B_{r}(x) = \S^d
$$
holds. We fix  $x_0\in X$ and consider its connected component
$\mathcal{C}$, which consists of  all vertices connected to $x_0$ by some path. The
set $\S^d\setminus\bigcup_{x\in \mathcal{C}} B_{r}(x)$ is open since
$\bigcup_{x\in \mathcal{C}} B_{r}(x)$ is closed. If $\S^d\setminus
\bigcup_{x\in \mathcal{C}} B_{r}(x)\neq \emptyset$, then there is a
sequence $(y_n)_{n\in\N}\subset \S^d\setminus \bigcup_{x\in
  \mathcal{C}} B_{r}(x)$ such that $y_n\rightarrow y\in
\bigcup_{x\in \mathcal{C}} B_{r}(x)$. This means that $y\in
B_{r}(x)$ for some $x\in \mathcal{C}$.  Each $y_n$ is contained in a
spherical cap $B_{r}(x)$ for some $x\in X\setminus \mathcal{C}$. Since there are only
finitely many, there is a subsequence $y_{n_k}$ and some $\tilde{x}\in
X \setminus \mathcal{C}$ such that $y_{n_k} \in B_{r}(\tilde{x})$
and $y_{n_k} \to y$. 
Consequently, for given $\epsilon >0$ and $k$ large enough,
$$
\dist(\tilde{x},x) \leq \dist(\tilde{x},y_{n_k}) +
\dist(y_{n_k},y) + \dist(y,x) \leq r + \epsilon + r.
$$
This implies $\dist(\tilde{x},x)\leq 2r$ and, hence, $B_{r}(x) \cap
B_{r}(\tilde{x}) \neq \emptyset $. Since $x\in \mathcal{C}$, then by
definition of $\cG_r$ also $\tilde{x} \in \mathcal{C}$.  This, however, contradicts the earlier observation $\tilde{x}\in
X \setminus \mathcal{C}$. Hence, we
derive  
$
\bigcup_{x\in \mathcal{C}} B_{r}(x) = \S^d
$.

Let $y\in X$ be arbitrary. By the covering property, $y\in B_r(x)$ for
some $x\in \mathcal{C}$, and thus  $B_r(y) \cap B_r(x) \neq \emptyset
$. Consequently, $y\in \mathcal{C}$ and $\mathcal{C} = X$. 
This means that  the graph  $\mathcal{G}_{r}$ is connected, as
claimed. 
\end{proof}
\subsection{Integration along the boundary of spherical caps}
 Due to \eqref{eq:dist def}, the boundary of a spherical cap $B_{r}(x)$ is 
\begin{align}
	\partial B_{r}(x)& = \{z\in\S^d : \dist(x,z) = r\} \nonumber\\
	 & = \{z\in\S^d : \langle x,z\rangle = \cos r\}.\label{eq:B r}
\end{align}
By the  Pythagorian Theorem, $	\partial B_{r}(x)$ is a $d-1$-dimensional Euclidean sphere of radius $\sin r$
centered at $x\cos r$ in a hyperplane perpendicular to $x$, i.e., 
\begin{equation}\label{eq:Sdr}
\partial B_{r}(x) = \{z\in\S^d : \|x\cos r-z\| = \sin r\}.
\end{equation}
It is the intersection of the sphere $\S^d$ with a suitable hyperplane and we enforce the normalization
\begin{equation}\label{eq:cd0}
\int_{\partial B_{r}(x)}1 = 1.
\end{equation}

Next, we specify distinct functions that we integrate along $\partial B_{r}(x)$. Let $\mathcal{H}^d_k$ be the vector space of spherical harmonics of
degree $k\in\N$, i.e., the eigenspace of the Laplace-Beltrami operator on $\S^d$ with respect to the eigenvalue $-k(k+d-1)$, $k\in\N$, see, e.g., \cite{Stein:1971kx} for background material. The dimension of $\mathcal{H}^d_k$ is 
\begin{equation}\label{eq:dim H}
	\dim(\mathcal{H}^d_k)=\frac{2k+d-1}{d-1}\binom{k+d-2}{d-2},
\end{equation}
and the orthogonality relations
\begin{equation}\label{eq:orth H}
\mathcal{H}^d_k\perp \mathcal{H}^d_l,\qquad k,l\in\N,\quad k\neq l,
\end{equation}
 hold. The space $\bigoplus_{ k\leq t}\mathcal{H}^d_k$ coincides with the restriction of $\Pi_t$ to the sphere $\S^d$. When integrating over $\S^d$ or subsets or along curves in $\S^d$, we may therefore replace $\Pi_t$ by $\bigoplus_{ k\leq t}\mathcal{H}^d_k$. The proofs of our main results in Sections \ref{sec:sphere} and
\ref{sec:sphere II} rely on the following key observation. 
\begin{lemma}\label{lemma:Samko analog}
There are numbers $c_{d,k}(r)>0$ such that 
\begin{equation}\label{eq:from Samko}
\int_{\partial B_{r}(x)} f = c_{d,k}(r)f(x), \qquad \text{ for all }
f\in\mathcal{H}^d_k \,\, \text{ and } \,  x\in\S^d.
\end{equation}
\end{lemma}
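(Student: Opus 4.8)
The plan is to exploit the rotational symmetry of the configuration and reduce everything to a scalar. For fixed $r$ I would introduce the spherical means operator $A_r$, acting on functions on $\S^d$ by $(A_r f)(x) = \int_{\partial B_{r}(x)} f$, with the normalized measure of \eqref{eq:cd0}. The first observation is that $A_r$ is $\SO(d+1)$-equivariant: a rotation $\rho$ maps $\partial B_{r}(x)$ onto $\partial B_{r}(\rho x)$ and preserves the normalized surface measure, so a change of variables gives $A_r(f\circ\rho) = (A_r f)\circ\rho$.

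Next I would invoke the decomposition $L^2(\S^d) = \bigoplus_k \mathcal{H}^d_k$ into pairwise inequivalent irreducible $\SO(d+1)$-modules. Any operator commuting with the group action preserves each isotypic component and, by Schur's lemma, acts on $\mathcal{H}^d_k$ as a scalar. This produces a number $c_{d,k}(r)$ with $A_r|_{\mathcal{H}^d_k} = c_{d,k}(r)\,\mathrm{Id}$, which is precisely the assertion $\int_{\partial B_{r}(x)} f = c_{d,k}(r) f(x)$; note that the scalar cannot depend on $x$. If one prefers to avoid representation theory, one fixes $x$, observes that $f\mapsto\int_{\partial B_{r}(x)}f$ is a linear functional on $\mathcal{H}^d_k$ invariant under the stabilizer of $x$, and uses that the stabilizer-invariant subspace of $\mathcal{H}^d_k$ is one-dimensional, spanned by the zonal reproducing kernel $Z^{(k)}_x$; the functional is therefore a multiple of $f\mapsto\langle f,Z^{(k)}_x\rangle = f(x)$.

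To pin down the constant I would test on the zonal harmonic $Z^{(k)}_x$, the reproducing kernel of $\mathcal{H}^d_k$. It depends on $z$ only through $\langle x,z\rangle$, hence is constant on $\partial B_{r}(x)$, where $\langle x,z\rangle = \cos r$ by \eqref{eq:B r}. Since $Z^{(k)}_x(x) = \dim\mathcal{H}^d_k$ and the addition theorem gives $Z^{(k)}_x(z) = \dim\mathcal{H}^d_k\cdot C_k^{(d-1)/2}(\langle x,z\rangle)/C_k^{(d-1)/2}(1)$ with $C_k^{(d-1)/2}$ the Gegenbauer polynomial, evaluating both sides yields
\[
 c_{d,k}(r) = \frac{C_k^{(d-1)/2}(\cos r)}{C_k^{(d-1)/2}(1)}.
\]

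The main obstacle is the positivity. The explicit formula shows $c_{d,k}(r)\to 1$ as $r\to 0^+$, so $c_{d,k}(r)>0$ is immediate once $r$ stays below the first zero of $\theta\mapsto C_k^{(d-1)/2}(\cos\theta)$, which is exactly the small-radius regime ($r$ comparable to the covering radius) relevant to the constructions. For $k\ge 2$ and $r$ near $\pi/2$ the Gegenbauer polynomial oscillates and changes sign, so establishing positivity over the claimed range requires genuinely controlling $r$ against the first Gegenbauer zero; this sign analysis, rather than the exact-integration identity itself, is the delicate part of the argument.
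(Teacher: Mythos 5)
Your proof is correct, and it takes a genuinely different route from the paper's. The paper argues by explicit computation in coordinates: following M\"uller it builds an orthogonal basis $Y^{d,l,m}_k(x)=P^{d,l}_k(x_{d+1})X^{d-1,m}_l(\bar x)$ of $\mathcal{H}^d_k$ from associated Legendre functions and spherical harmonics on $\S^{d-1}$, evaluates $\int_{\partial B_r(e_{d+1})}Y^{d,l,m}_k$ using homogeneity of $X^{d-1,m}_l$, reads off the constant from $Y^{d,l,m}_k(e_{d+1})=a_{d,k}\delta_{l,0}$, and only at the very end invokes rotation invariance to pass to general $x$. You instead put the symmetry first: equivariance of the spherical-means operator plus irreducibility of $\mathcal{H}^d_k$ (Schur), or alternatively the one-dimensionality of the subspace of $\mathcal{H}^d_k$ fixed by $\mathrm{Stab}(x)$, gives the scalar for free, and the addition theorem identifies $c_{d,k}(r)=C^{(d-1)/2}_k(\cos r)/C^{(d-1)/2}_k(1)$. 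The two constants agree, since M\"uller's Legendre function $P^{d,0}_k$ is the Gegenbauer polynomial normalized to equal $1$ at the argument $1$. Your route avoids all basis bookkeeping; the paper's stays elementary (no representation theory) and reuses machinery it needs anyway. One caveat on your first variant: Schur's lemma over $\mathbb{R}$ only says the commutant is a division algebra, so either complexify $\mathcal{H}^d_k$ or lean on your second, zonal/stabilizer argument, which is airtight as stated.

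Concerning positivity, which you single out as the delicate point: you are right that it fails, but this is a defect of the lemma's statement, not a gap you must close. For $k\ge 2$ the function $r\mapsto C^{(d-1)/2}_k(\cos r)$ vanishes and changes sign inside $(0,\pi/2)$ (for $d=2$, $k=2$: $\cos r=1/\sqrt{3}$), so $c_{d,k}(r)>0$ cannot hold for all admissible $r$; the paper's own proof produces $c_{d,k}(r)=P^{d,0}_k(\cos r)/a_{d,k}$ and establishes no positivity either. The point is that positivity is never used: in the proofs of Theorems \ref{thm:t-design} and \ref{thm:Sd existence} only the identity \eqref{eq:from Samko} and the normalization $c_{d,0}(r)=1$ matter, because $\int_{\S^d}f=0$ for $f\in\mathcal{H}^d_k$, $k\ge 1$, so, as the paper itself notes, the factor $c_k(r)$ does not matter. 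Hence you should not attempt to control $r$ against the first Gegenbauer zero; the honest fix is to claim only real constants $c_{d,k}(r)$ (possibly zero or negative) together with $c_{d,0}(r)=1$.
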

In particular, the  normalization \eqref{eq:cd0} leads to
$c_{d,0}(r)=1$. The identity \eqref{eq:from Samko} is stated by Samko in
\cite[(1.37)]{Samko:1983oa} and referred to as a variant of the Cavalieri
principle \cite[Remark 4]{Samko:1983oa}. The analogue for the complex
sphere is mentioned in \cite{Oliveira:2015aa}. 

Here we provide a new proof of Samko's formula that is based on an
inductive construction of an orthonormal basis for $\cH ^d_k$. For
these facts we 
follow ~\cite{Muller:1966aa}. 
\begin{proof}
For $x\in\S^d$, we write 
\begin{equation*}
x=x_{d+1}e_{d+1}+\sqrt{1-x_{d+1}^2} \bar{x},\qquad \bar{x}\in\S^{d-1}.
\end{equation*}
Let $\{X^{d-1,m}_l\}_{m=1}^{\dim(\mathcal{H}^{d-1}_l)}$ be an orthonormal basis for $\mathcal{H}^{d-1}_l$. We denote the associated Legendre functions by $P^{d,l}_k$, $l=0,\ldots,k$, cf.~\cite{Muller:1966aa}. Then 
\begin{equation*}
Y^{d,l,m}_k(x) = P^{d,l}_k(x_{d+1}) X^{d-1,m}_l(\bar{x}),\qquad l=0,\ldots,k,\quad m=1,\ldots,\dim(\mathcal{H}^{d-1}_l),
\end{equation*}
form an orthogonal basis~\footnote{Note that for the next step of the
  induction we need to relabel the $Y^{d,l,m}_k$ as $X^{d,m}_l$.} for $\mathcal{H}^d_k$ and 
\begin{equation}\label{eq:Y in P and delta}
Y^{d,l,m}_k(e_{d+1}) = a_{d,k} \delta_{l,0}, 
\end{equation}
with suitable normalization constants $a_{d,k}$, cf.~\cite[Lemma
15]{Muller:1966aa}.

We first verify that $\int _{ \partial B_r(x) } f = c_{d,k}(r) f(x) $
for $x=e_{d+1}$. In this case $ \partial B_r(e_{d+1}) = \{ (z' \sin r,
\cos r): z' \in \sdd \}$, and the homogeneity of $X_l^{d-1,m}$ yields
\begin{align*}
\int_{\partial B_r(e_{d+1})} Y^{d,l,m}_k &= P^{d,l}_k(\cos r)
   \int_{\S^{d-1}}X^{d-1,m}_l(\sin r \, \cdot )
  \\
  &=  P^{d,l}_k(\cos r) \sin ^l r\int_{\S^{d-1}}X^{d-1,m}_l \\
& = P^{d,0}_k(\cos r) \sin ^l r\,\, \delta_{l,0}.
\end{align*}
After comparing with  \eqref{eq:Y in P and delta}, we set
$c_{d,k}(r):=P^{d,0}_k(\cos r) \sin ^l r /  a_{d,k}$ and obtain 
\begin{equation*}
\int_{\partial B_r(e_{d+1})} Y^{d,l,m}_k = c_{d,k}(r) Y^{d,l,m}_k(e_{d+1}).
\end{equation*}
For $k=0$, $Y_0^{d,0,0}$ is constant, and the normalization $\int _{
  \partial B_r(e_{d+1})} 1 = 1$ implies that $c_{d,0}(r) = 1$ for all
$r$. 
Thus  we have verified that \eqref{eq:from Samko} holds for all $f\in\mathcal{H}^d_l$ and $x=e_{d+1}$. 

We now consider general $x\in\S^d$. There is a rotation matrix $O$
such that $x=Oe_{d+1}$ and $\partial B_r(x) = \partial B_r(O e_{d+1})
= O \partial B_r(e_{d+1})$. This leads to
\begin{align*}
\int_{\partial B_r(x)} f  =  \int_{O \partial B_r(e_{d+1})} f  = \int_{\partial B_r(e_{d+1})} f\circ O.
\end{align*}
Since $\mathcal{H}^d_l$ is orthogonally invariant, $f\circ O\in \mathcal{H}^d_l$ and our observations for $e_{d+1}$ imply 
\begin{equation*}
\int_{\partial B_r(x)} f  = c_{d,k}(r) (f\circ O) (e_{d+1})=  c_{d,k}(r) f(x).\qedhere
\end{equation*}
\end{proof}
On $\S ^2$ Samko's formula connects point evaluations to line
integrals and thus gives a first hint of how quadrature formulas might
be related to $t$-design curves.

\section{Asymptotically optimal $t$-design curves in $\S ^2$}\label{sec:sphere}
We now state our first main result, which is Theorem~\ref{tint2} of
the Introduction. 
\begin{thm}\label{thm:t-design}
	There is a sequence of asymptotically optimal $t$-design
        curves in $\S^2$, i.e., there exists a sequence of piecewise
        smooth, closed curves $\gamma _t:[0,1]\rightarrow \S ^2$, for $t\in \N $,
of length $\ell (\gamma _t) \asymp t $,         such that
        $$\frac{1}{\ell(\gamma _t)}\int _{\gamma _t} f = \int _{\S ^2} f\qquad \text{ for all
        } f\in \Pi _t \, .$$
The trajectory of every $\gamma _t$ is a union of Euclidean circles.          
\end{thm}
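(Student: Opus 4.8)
The plan is to take the trajectory to be a union of boundary circles of spherical caps of a common radius centered at asymptotically optimal $t$-design points, and then to thread these circles into a single closed curve by an Eulerian circuit argument. Concretely, I would first fix, via Bondarenko--Radchenko--Viazovska, a sequence of $t$-designs $X_t\subset\S^2$ with $|X_t|\asymp t^2$ and covering radius $\rho_t\lesssim t^{-1}$, and choose a common cap radius $r=r_t\asymp t^{-1}$ with $\rho_t\le r_t<\pi/2$.

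The quadrature identity comes from Samko's formula. Writing $f\in\Pi_t$ as $f=\sum_{k=0}^t f_k$ with $f_k\in\mathcal{H}^2_k$, Lemma~\ref{lemma:Samko analog} gives $\int_{\partial B_{r}(x)}f=\sum_{k=0}^t c_{2,k}(r)f_k(x)$ for the normalized measure on each circle. Summing over $x\in X_t$ and using the $t$-design property $\sum_{x\in X_t}f_k(x)=|X_t|\int_{\S^2}f_k$, only the term $k=0$ survives (since $\int_{\S^2}f_k=0$ for $k\ge1$ while $c_{2,0}(r)=1$), yielding $\sum_{x\in X_t}\int_{\partial B_{r}(x)}f=|X_t|\int_{\S^2}f$. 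Because all circles have the same arc length $2\pi\sin r$, passing from the normalized to the arc-length measure multiplies both sides by $2\pi\sin r$, so the union $\Gamma_t=\bigcup_{x\in X_t}\partial B_{r}(x)$ integrates every $f\in\Pi_t$ exactly, with total length $|X_t|\cdot 2\pi\sin r\asymp t^2\cdot t^{-1}=t$.

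It remains to realize $\Gamma_t$ as the trajectory of one closed curve without changing the line integral. I would view $\Gamma_t$ as a finite graph whose vertices are the (finitely many) pairwise intersection points of the circles and whose edges are the intervening sub-arcs. Since $r_t\ge\rho_t$, Lemma~\ref{lemma:G connection 0} makes $\mathcal{G}_{r_t}$ connected; as the caps have equal radius, an edge of $\mathcal{G}_{r_t}$ forces the two boundary circles to meet, so a spanning tree of $\mathcal{G}_{r_t}$ links any two circles through a chain of intersecting ones and connectivity transfers to $\Gamma_t$. Crucially, every vertex of $\Gamma_t$ has even degree, because each circle passing through a point contributes exactly two arc-ends. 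A finite connected graph with all even degrees admits an Eulerian circuit; traversing it (reparametrized on $[0,1]$) produces a single closed, piecewise-smooth curve $\gamma_t$ with finitely many self-intersections whose trajectory is exactly $\Gamma_t$. As the circuit uses each sub-arc exactly once, $\int_{\gamma_t}f=\sum_{x\in X_t}\int_{\partial B_{r}(x)}f\,\mathrm{d}s$ and $\ell(\gamma_t)=|X_t|\cdot 2\pi\sin r$, so $\gamma_t$ is a $t$-design curve of length $\lesssim t$; the matching lower bound $\ell(\gamma_t)\gtrsim t$ is Theorem~\ref{thm:t-design length}.

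The main obstacle I anticipate is the geometric step linking connectivity of $\mathcal{G}_{r_t}$ to connectivity of $\Gamma_t$ while simultaneously keeping $\ell(\gamma_t)\asymp t$: one must ensure the boundary circles genuinely intersect, not merely that the caps overlap, which needs equal radii and $r_t$ kept away from the tangency threshold, and one should pick $r_t$ generically within $[\rho_t,Ct^{-1}]$ so that intersections are transversal double points and $\Gamma_t$ has only finitely many, clean self-intersections. The Eulerian property itself is automatic from the evenness of all vertex degrees, so I do not expect difficulty there.
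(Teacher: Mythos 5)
Your proposal is correct and follows essentially the same route as the paper: Bondarenko--Radchenko--Viazovska design points, Samko's formula (Lemma~\ref{lemma:Samko analog}) turning the point quadrature into exact integration over a union of equal-radius boundary circles, connectivity via Lemma~\ref{lemma:G connection 0}, and an Eulerian circuit threading the circles into one closed curve of length $\asymp t$. The only cosmetic difference is that you invoke the undirected Euler theorem (all vertex degrees even) where the paper orients each circle and uses the directed version (in-degree equals out-degree at every vertex); also, your worry about choosing $r_t$ generically for transversality is unnecessary, since two distinct circles of equal radius $r<\pi/2$ on $\S^2$ meet in at most two points, and even a tangency point has even degree.
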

\begin{proof}
According to \eqref{eq:Sdr} with $d=2$, the boundary of the spherical cap $B_r(x)=\{z\in\S^2 : \dist(x,z)\leq r\}$ is a Euclidean circle of radius $\sin r$ given by 
\begin{equation}\label{eq:Ga in proof}
	\Gamma_{x,r} = \{z\in\S^2 : \|x\cos r-z\|=\sin r\}\subset\S^2,
\end{equation}
cf.~Figure \ref{fig:sphere00}. The asymptotically optimal $t$-design
curve will be constructed from a union  of circles $\Gamma_{x,r}$,
where $x$ runs through a set of $t$-design points and $r$ is
essentially their covering radius. For each circle we choose an
(arbitrary) orientation and obtain a closed curve $\gamma _{x,r}$ with
trajectory $\Gamma _{x,r}$. The formal sum $\gamma _t^r = \dotplus
_{x\in X_t} \gamma _{x,r}$ is usually called a cycle with trajectory
$\bigcup _{x\in X_t} \Gamma 
_{x,r}$ and integral $\int _{\gamma _t^r} f = \sum _{x\in X_t} \int
_{\gamma _{x,r}} f$ ~\cite{rudin87}.

Note that in $\S ^2$ the
submanifold $\partial B_r(x)$ and the circle $\Gamma _{x,r}$ coincide, so that the normalization~\eqref{eq:cd0} leads to $\int_{\Gamma_{x,r}} f = \frac{1}{\ell(\gamma_{x,r})}\int_{\gamma_{x,r}}f$, where $\gamma_{x,r}$ is the actual curve that traverses $\Gamma_{x,r}$. 
In higher dimensions this slight inconsistency between $\Gamma _{x,r}$
and $\partial B_r(x)$ no longer
occurs. 

(i) According to \cite{Bondarenko:2011kx}, there is a sequence
$(X_t)_{t\in\N}$ of asymptotically optimal $t$-design points in
$\S^2$, i.e., there exist  finite sets $X_t\subset\S^2$, such that
$|X_t|\asymp t^2$  and 
\begin{equation}\label{eq:design ingredient}
\frac{1}{|X_t|}\sum_{x\in X_t} f(x)=\int_{\S^2} f,\qquad \text{ for
  all }  f\in
\bigoplus_{k\leq t}\mathcal{H}^2_k\, .
\end{equation}

(ii) Let  $r>0$, $\Gamma^r_t:=\bigcup_{x\in X_t}\Gamma_{x,r}$,
and $\gamma^r_t:=\dotplus _{x\in X_t}\gamma_{x,r}$ be the corresponding
cycle. We first show that this cycle provides exact integration on
$\Pi _t$. 

For the component $\mathcal{H}^2_0=\spann\{1\}$, we observe 
\begin{equation*}
\frac{1}{\ell(\gamma^r_t)} 	\int_{\gamma^r_t}1= 1=\int_{\S^d} 1.
\end{equation*}
We now consider $f\in \mathcal{H}^2_k$ for $1\leq k\leq t$. Lemma \ref{lemma:Samko analog} with $c_k(r):=c_{2,k}(r)$ and the definition of $t$-design points yield
\begin{align*}
\frac{1}{\ell(\gamma^r_t)} 	\int_{\gamma^r_t}f & = \frac{1}{|X_t|}\sum_{x\in X_t}
       \frac{1}{\ell(\gamma_{x,r})} \int_{\gamma_{x,r}}f\\
       & = \frac{1}{|X_t|}\sum_{x\in X_t}
 \int_{\Gamma_{x,r}}f\\
       & = \frac{c_k(r)}{|X_t|}\sum_{x\in X_t} f(x)\\
       &=c_k(r) \int_{\S^2} f.
\end{align*}       
Since  $\mathcal{H}^2_0\perp\mathcal{H}^2_k$ for $k=1,2,\ldots$,
cf.~\eqref{eq:orth H},  we obtain  $\int_{\S^2} f=0$ for  $f\in
\mathcal{H}^2_k$, and  the factor $c_k(r)$ does not matter. We derive
\begin{equation}\label{eq:again}
\frac{1}{\ell(\gamma^r_t)} 	\int_{\gamma^r_t}f  = \int_{\S^2} f
\qquad \text{ for all } f\in \cH _k^2,    
\end{equation}
and, by linearity,  exact quadrature holds for all $f\in 
\bigoplus _{k=0}^t \cH _k^2 $.

(iii) Identity~\eqref{eq:again} holds for every $0<r<\frac{\pi}{2}$,
and thus every cycle $\gamma _{t}^r$ (formal sum of closed curves)
yields exact integration.    We now determine
radii $r=r_t$ depending on the degree $t$, so that $\Gamma_t:=\Gamma^{r_t}_t=\bigcup_{x\in
  X_t}\Gamma_{x,r_t}$ is indeed the trajectory of a single continuous
closed  curve. 

Let $\rho_t = \sup_{x\in\X}\left(\inf_{y\in X_t}\dist_{\X}(x,y)\right)$
be  the covering radius of $X_t$ as in  \eqref{eq:cov rad}. 
We choose $r_t:=\rho_t$, so that
 \begin{equation}\label{eq:S2 cov B}
 	\S^2=\bigcup_{x\in X_t} B_{r_t}(x).
 \end{equation}
The circles $\Gamma_{x,r_t}=\partial B_{r_t}(x)$, for $x\in X_t$,
induce a graph $\mathcal{G}$   as
follows: the vertices of $\cG $ 
 are the intersection points of the $\Gamma _{x,r_t}$,  and
its  edges are associated to arcs
on these circles between the intersection points, cf.~Figures
\ref{fig:sphere00} and \ref{fig:sphere}. For each circle
$\Gamma_{x,r_t}$ we have  fixed  an orientation. 
As a result of this construction  we obtain a directed graph
$\mathcal{G}_{\rightarrow}$~\cite{Wilson:1998qa}.

\begin{figure}
\centering
\includegraphics[width=.38\textwidth]{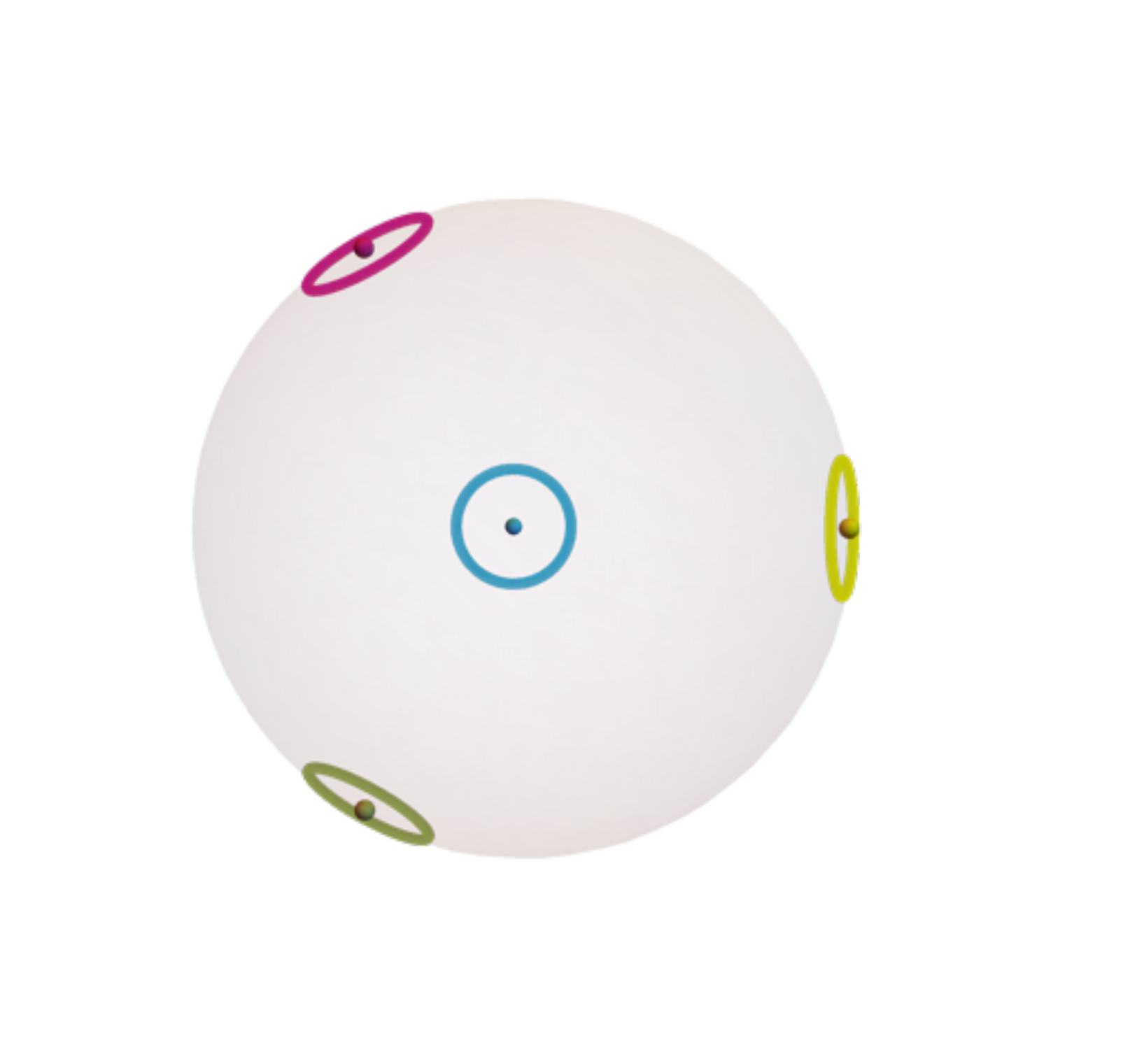}\hspace{-1.5cm}
\includegraphics[width=.38\textwidth]{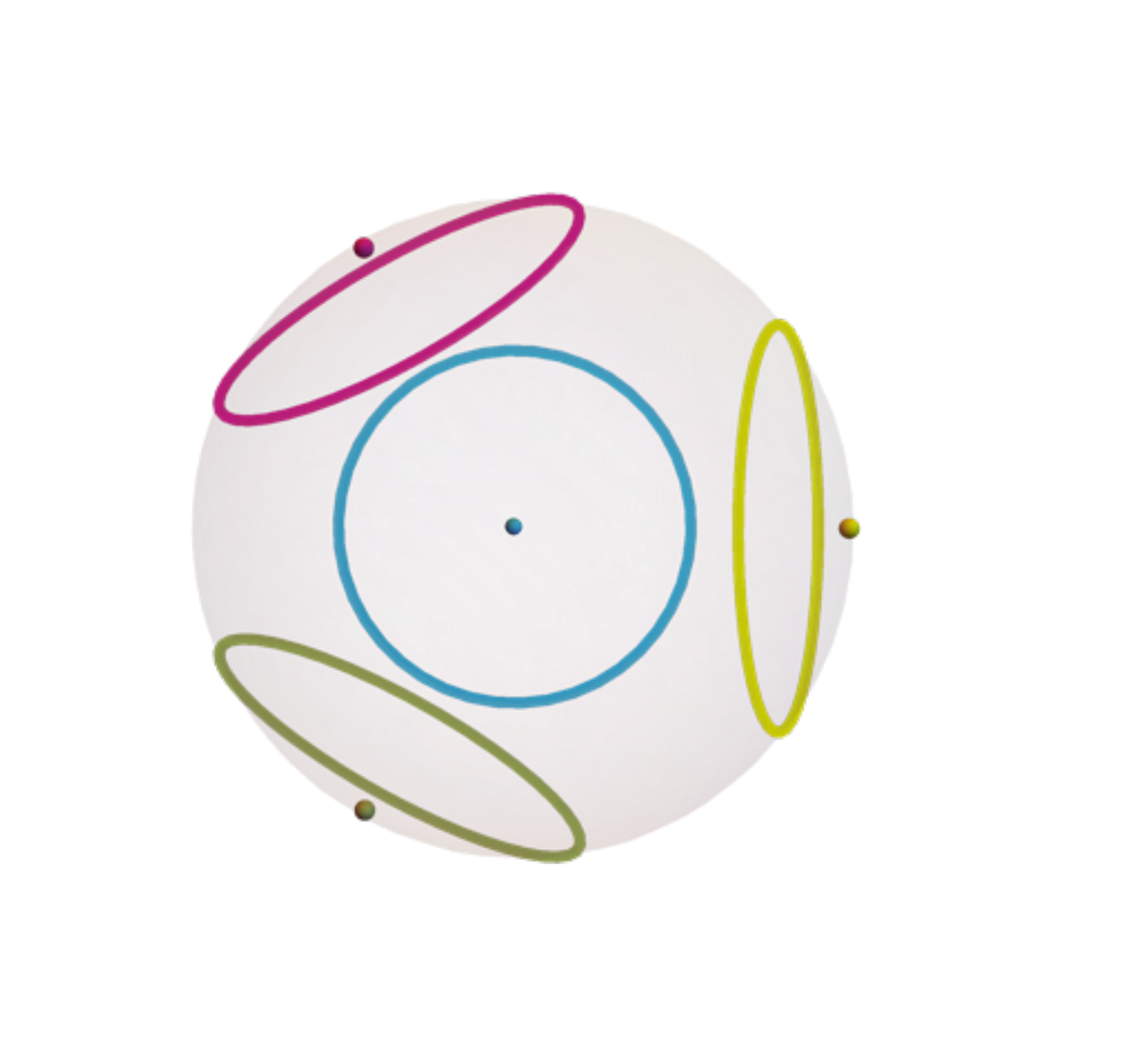}\hspace{-1.5cm}
\includegraphics[width=.38\textwidth]{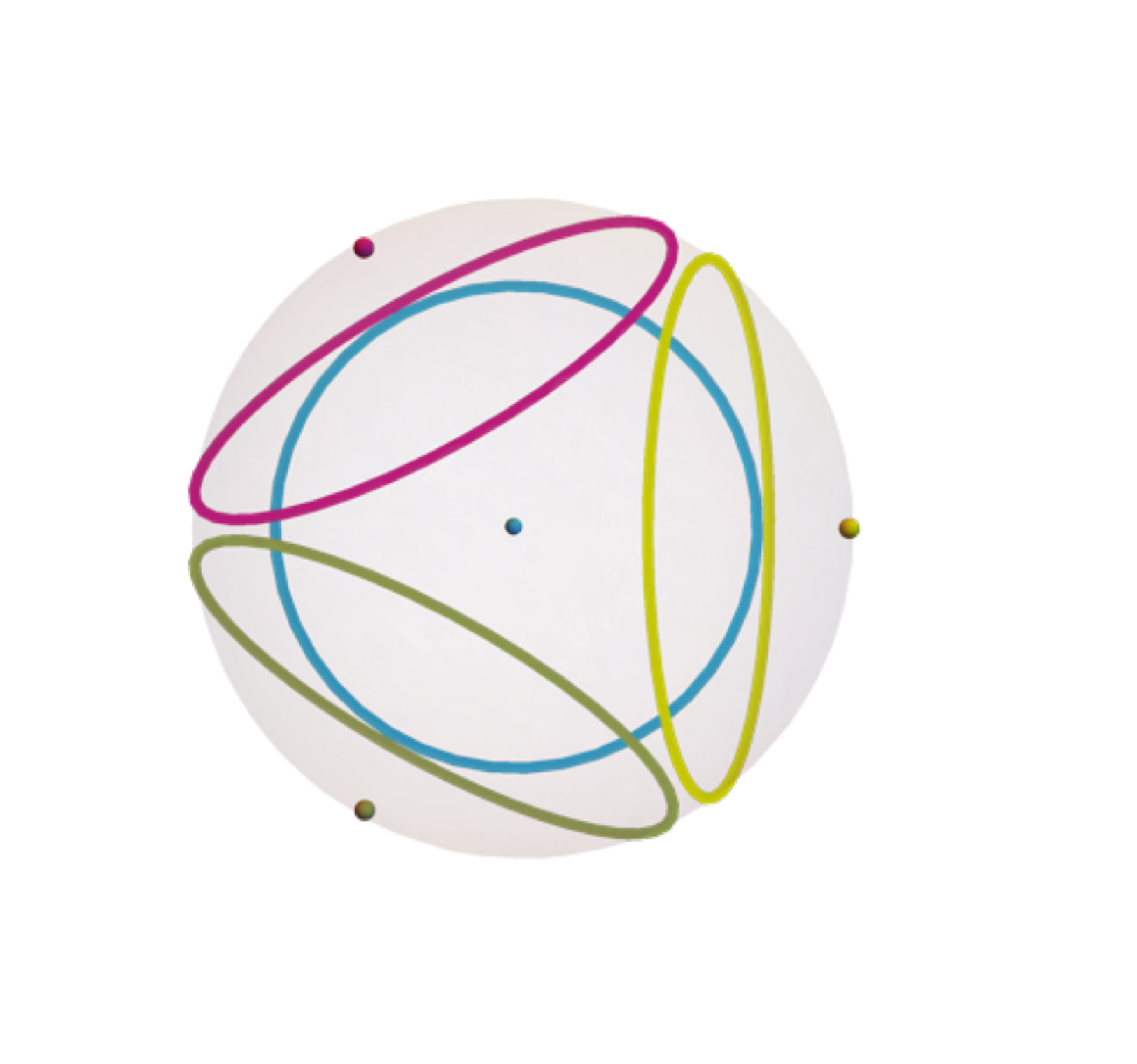}

\includegraphics[width=.38\textwidth]{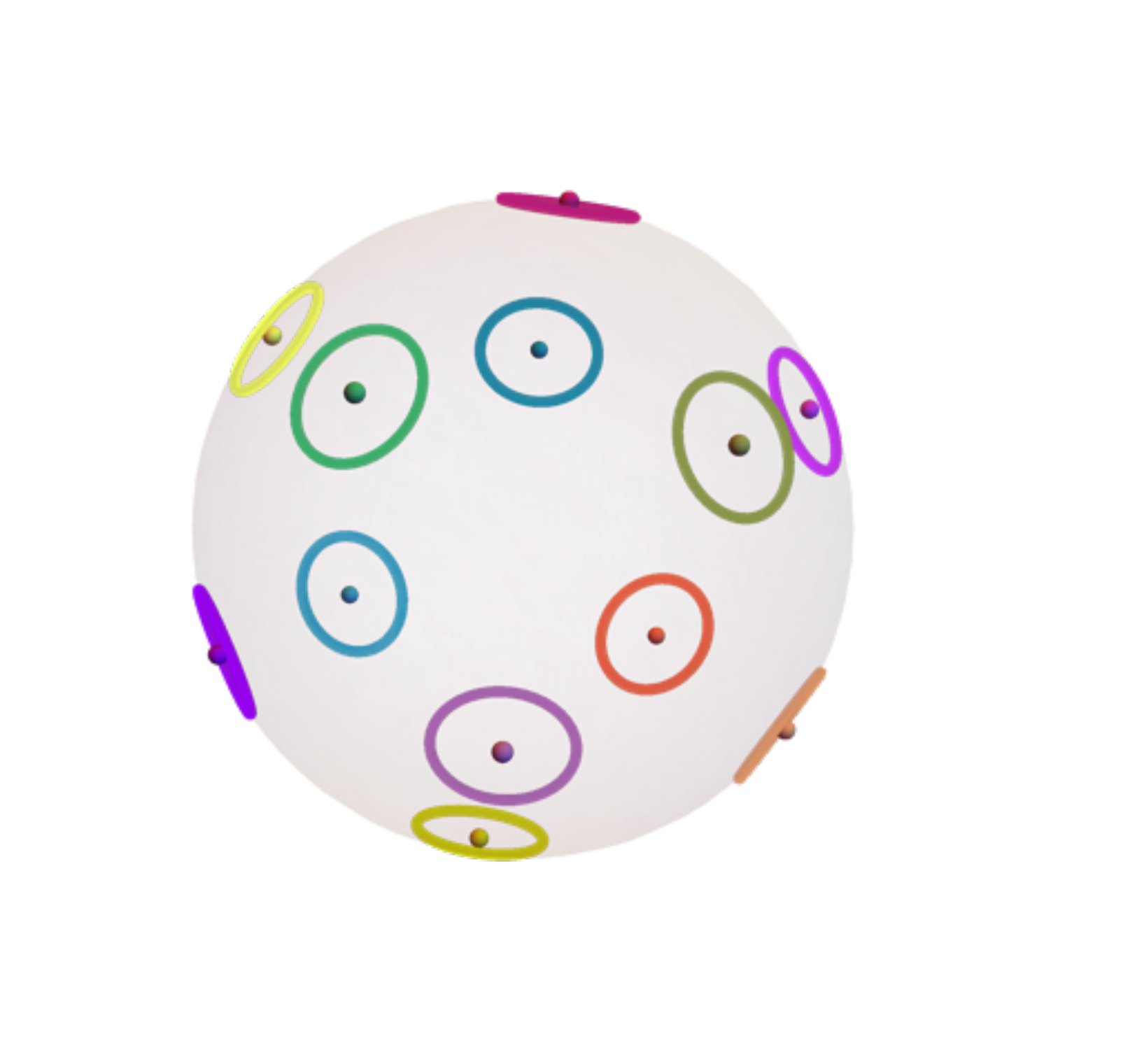}\hspace{-1.5cm}
\includegraphics[width=.38\textwidth]{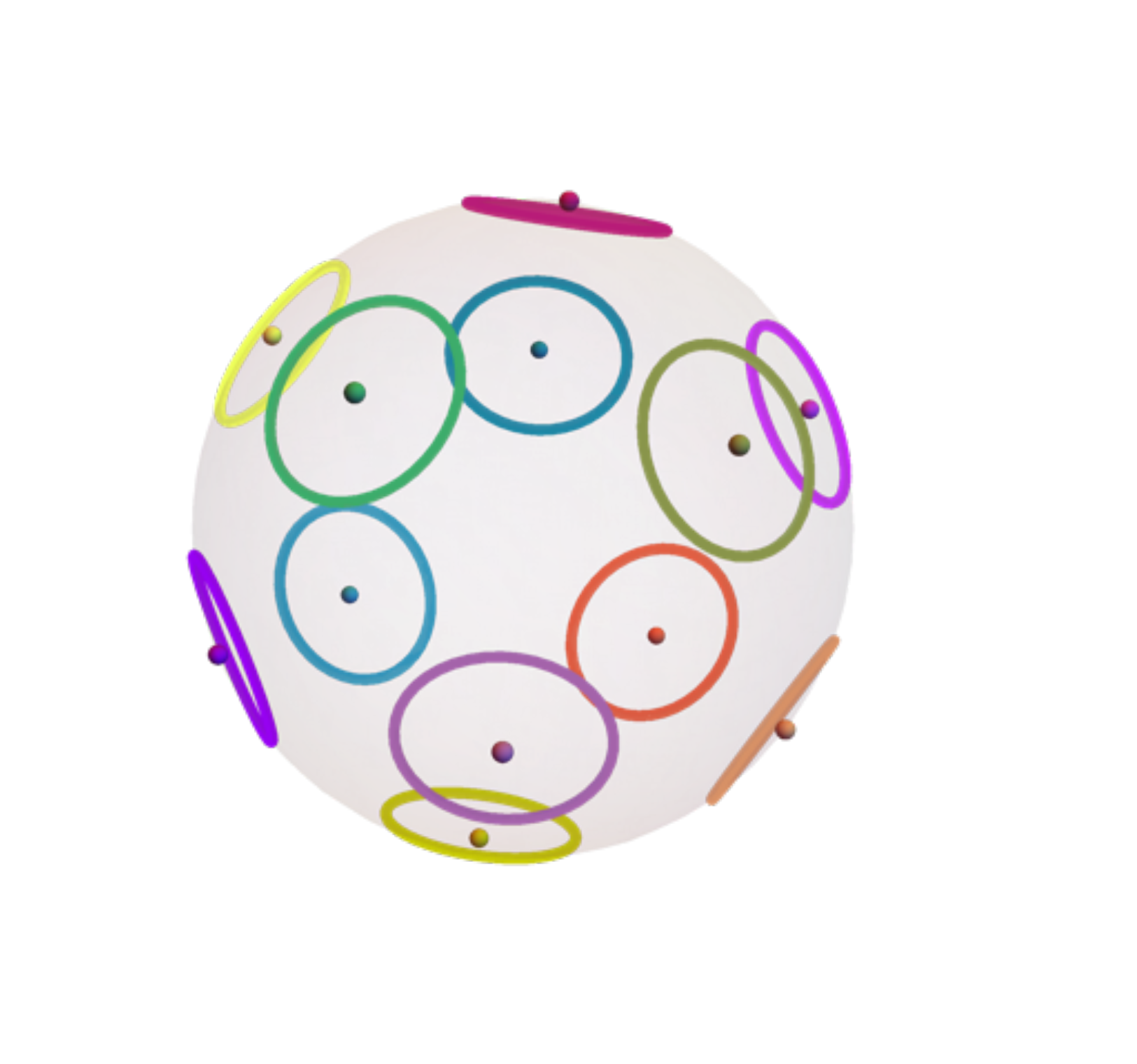}\hspace{-1.5cm}
\includegraphics[width=.38\textwidth]{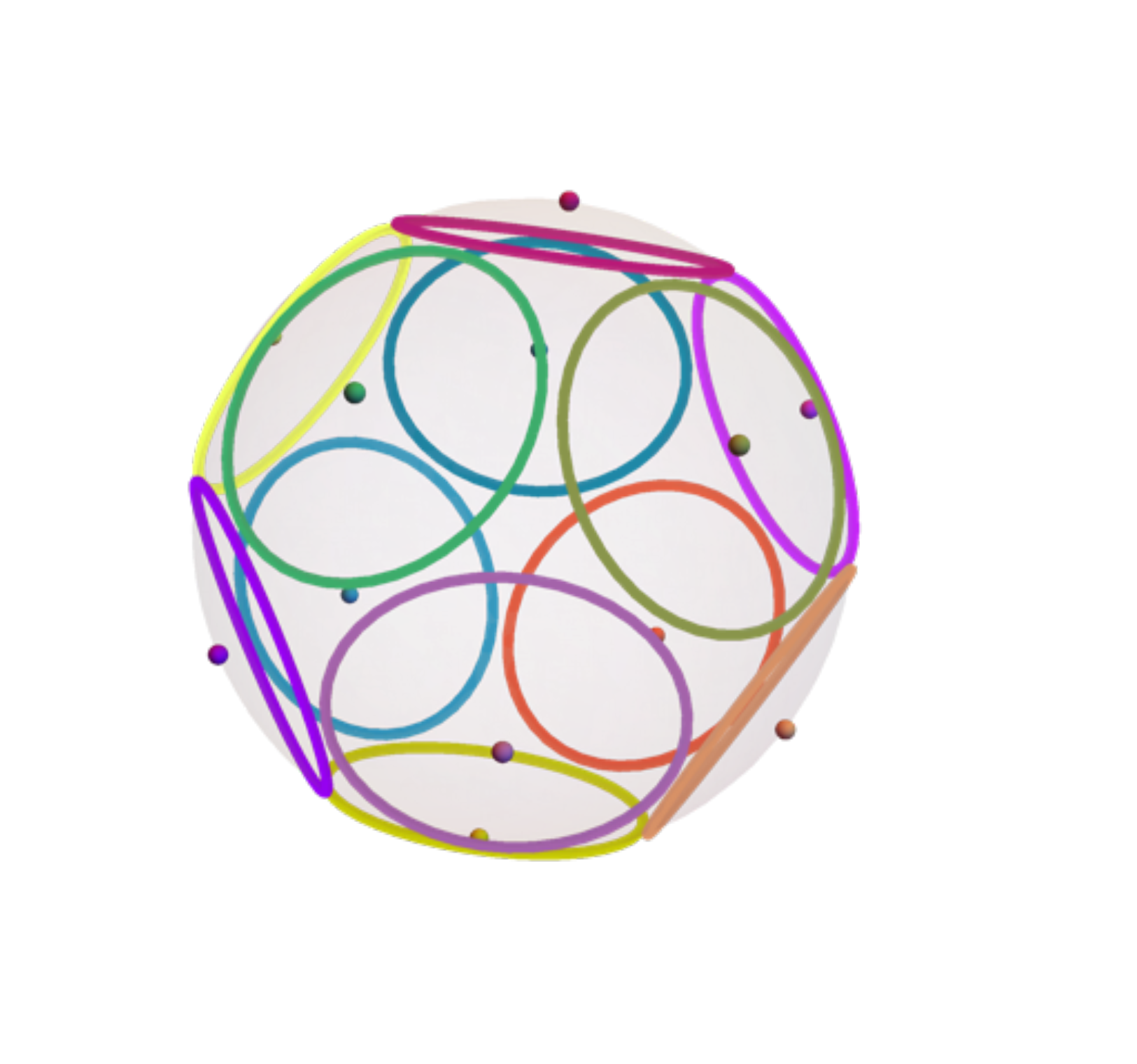}
\caption{Circles centered at $t$-design points with radii increasing
  from left to right. The  top row shows $2$-design points and the
  bottom row $5$-design points.
}\label{fig:sphere00}
\end{figure}
\begin{figure}
\centering
\includegraphics[width=.6\textwidth]{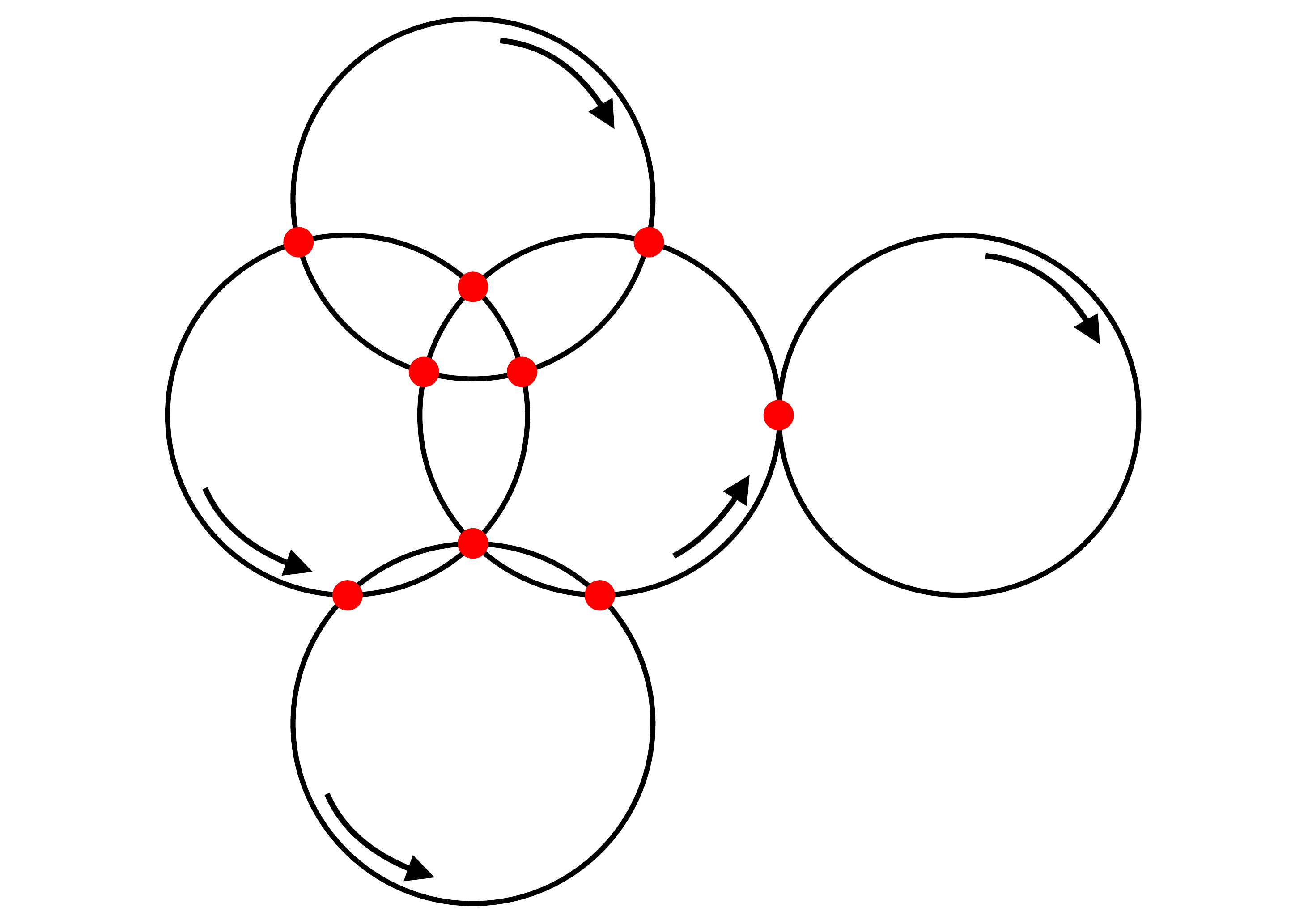}
\caption{A set of connected circles induces a directed graph, whose
  vertices are the intersection points and whose  edges are the
  associated arcs 
   of the circles. Each vertex has equal in-degree and
  out-degree, namely the number of circles running through this
  point. 
}\label{fig:sphere}
\end{figure}

\begin{lemma}\label{lemma:G connection}
The graph $\mathcal{G}$ is strongly connected, i.e., the directed graph $\mathcal{G}_{\rightarrow}$ is connected. 
\end{lemma}
\begin{proof}
We first verify that the undirected graph $\mathcal{G}$ is
connected. Pick two arbitrary but distinct vertices
$v_1,v_2\in\mathcal{G}$. Then there are points $x_1,x_2\in X_t$ such
that $v_i\in \partial B_{r_t}(x_i)$ for $i=1,2$. If $x_1=x_2$, then
$v_1,v_2$ are connected in $\mathcal{G}$ since they lie on the same
circle.

We may thus assume that  $x_1\neq x_2$.   In this case we consider the
auxiliary  graph $\mathcal{G}_{r_t}$ treated in Lemma~\ref{lemma:G
  connection 0}. Its  vertices are the points of the $t$-design  
$X_t$. Two points  $x,y\in X_t$ are connected by an edge  if and only if $B_{r_t}(x)\cap
B_{r_t}(y) \neq \emptyset$. According to  Lemma \ref{lemma:G connection 0} and $r_t\geq \rho_t$, the graph $\mathcal{G}_{r_t}$ is connected. Thus, there is a path from $x_1$ to $x_2$ in $\mathcal{G}_{r_t}$, say $x_1=x_{i_1},\ldots,x_{i_m}=x_2$, so that $B_{r_t}(x_{i_j})\cap B_{r_t}(x_{i_{j+1}})\neq\emptyset$. This also implies $\partial B_{r_t}(x_{i_j})\cap \partial B_{r_t}(x_{i_{j+1}})\neq\emptyset$. Clearly, vertices $v_{i_j},v_{i_{j+1}}$ of $\mathcal{G}$ with $v_{i_j}\in \partial B_{r_t}(x_{i_j})$ and $v_{i_{j+1}}\in \partial B_{r_t}(x_{i_{j+1}})$ are connected in $\mathcal{G}$. Eventually, there is a path from $v_1$ to $v_2$ in $\mathcal{G}$. 

We still need to verify that the directed graph
$\mathcal{G}_{\rightarrow}$ is also connected. If two distinct
vertices $v_{i_j},v_{i_{j+1}}$ lie on the same circle $\partial
B_{r_t}(x_1)$, then we simply get from $v_{i_j}$ to $v_{i_{j+1}}$ by
following the chosen orientation of $\partial B_{r_t}(x_1)$. 
No further difficulties arise and we deduce that $\mathcal{G}_{\rightarrow}$ is connected.
\end{proof}
By construction, each vertex of $\mathcal{G}_{\rightarrow}$ has as many 
incoming as outgoing edges, cf.~Figure \ref{fig:sphere}. Euler's Theorem about directed, strongly connected graphs implies that
there is an Euler cycle~\cite{Wilson:1998qa}. Hence, all circles
in $\bigcup _{x\in X_t} \partial B_{r_t}(x)=\bigcup_{x\in
  X_t}\Gamma_{x,r_t}$ can be traversed by a
single, closed, piecewise smooth  curve $\gamma_t$ 
on $\mathbb{S}^2$  with 
trajectory $\Gamma_t=\bigcup_{x\in
  X_t}\Gamma_{x,r_t}$. 

(iv) To compute the length of $\gamma _t$, we relate the covering
radius $r_t $ of $X_t$ to the degree $t$.  The same  proof as in
\eqref{eq:rho t etc} shows that $r_t \lesssim t^{-1}$,  see also \cite{Brandolini:2014oz,Breger:2016rc}. Therefore, 
$\ell(\gamma_{x,r_t})=2\pi \sin r_t\asymp r_t \lesssim t^{-1}$. This  leads to
\begin{equation*}
	\ell(\gamma_t)=|X_t| \ell(\gamma_{x,r_t})\asymp t^2 \, r_t\lesssim t.
\end{equation*}
In view of the lower bound for the length of $t$-design curves in
Theorem~\ref{thm:t-design length}, 
our construction yields a  sequence of
asymptotically optimal $t$-design curves. 
\end{proof}

\section{General existence of spherical $t$-design curves}\label{sec:sphere II}

We now prove  the existence  of $t$-design curves in $\S^d$ for
all $t\in\N$ in dimension  $d\geq 2$. This is Theorem~\ref{tint3} of the Introduction.

\begin{thm}\label{thm:Sd existence}
	Let $d\geq 2$ be an arbitrary integer. There is a sequence $(\gamma_t)_{t\in\N}$ of $t$-design curves in $\S^d$ such that
	\begin{equation*}
		\ell(\gamma_t) \lesssim t^{\frac{d(d-1)}{2}} .
              \end{equation*}
        Furthermore, the trajectory of every $\gamma _t$ consists of a union of
        Euclidean circles.       
      \end{thm}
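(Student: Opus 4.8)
The plan is to prove Theorem~\ref{thm:Sd existence} by induction on the dimension $d$, using the solution for $\S^2$ (Theorem~\ref{thm:t-design}) as the base case and the decomposition of $\S^d$ into parallel subspheres as the inductive mechanism. The key structural observation is that $\S^d$ can be foliated by the $(d-1)$-spheres $\partial B_r(e_{d+1})$ as $r$ ranges over $(0,\pi)$, and by the Samko-type formula in Lemma~\ref{lemma:Samko analog} together with the product structure of spherical harmonics used in its proof, integration over $\S^d$ factors into an integration over the ``latitude'' variable $x_{d+1}$ against a one-dimensional weight, followed by integration over each latitude subsphere $\S^{d-1}$. This suggests a two-stage quadrature: first discretize the latitude direction so that the one-dimensional integral is computed exactly by a finite set of latitudes (a Gauss-type quadrature rule in the variable $x_{d+1}$ with appropriate weight), and on each chosen latitude subsphere place a $t$-design curve obtained from the inductive hypothesis in dimension $d-1$.

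The main steps I would carry out are as follows. First, I would make precise the factorization of $\int_{\S^d} f$ for $f \in \bigoplus_{k\le t}\mathcal{H}^d_k$ into a weighted integral over latitudes of the averages of $f$ over the subspheres; this is essentially the Funk--Hecke / Cavalieri structure already visible in the proof of Lemma~\ref{lemma:Samko analog}. Second, I would set up a quadrature rule in the latitude variable that is exact for the relevant class of functions (polynomials of degree $t$ in $x_{d+1}$ times lower-dimensional harmonics), choosing a finite collection of latitudes $r_1,\dots,r_N$ with $N \lesssim t$ and matching weights. Because the curves must have equal-weight line-integral normalization, the differing Gauss weights at each latitude have to be realized geometrically: one traverses the subsphere $\S^{d-1}$ at latitude $r_j$ with a $t$-design curve whose \emph{length} is proportional to the desired weight, which is possible since the inductive curve can be dilated by a choice of radius on the smaller subsphere. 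Third, I would apply the inductive hypothesis to obtain, on each latitude subsphere (a Euclidean $(d-1)$-sphere of radius $\sin r_j$), a $t$-design curve of length $\lesssim t^{(d-1)(d-2)/2}$, rescaled to sit inside $\S^d$. Fourth, I would connect the resulting $N \lesssim t$ disjoint subsphere curves into a single closed piecewise-smooth curve by the same graph-theoretic device used in Theorem~\ref{thm:t-design}: bridge the latitude curves by meridian arcs (pieces of great circles through the poles), build the associated directed graph, verify strong connectivity and the in-degree/out-degree balance, and extract an Euler cycle. Finally, I would tally the length: the total is $\sum_j \ell(\gamma_{t}^{(d-1)}\text{ at latitude }r_j)$ plus the bridging meridians, giving $\ell(\gamma_t) \lesssim t \cdot t^{(d-1)(d-2)/2} = t^{d(d-1)/2}$, which is exactly the claimed bound and closes the induction.

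The hard part will be reconciling the \emph{unequal} latitude weights with the \emph{equal-weight} (arc-length normalized) nature of a $t$-design curve. A $t$-design curve integrates with the single constant weight $1/\ell(\gamma)$, so to reproduce a Gauss quadrature rule with nonuniform weights $w_j$ across latitudes, the line integral over the latitude-$r_j$ portion must contribute total mass proportional to $w_j$; this forces the length allotted to latitude $r_j$ to be tuned to $w_j$. I would handle this by allowing the inductive $t$-design curve on each subsphere to be repeated or rescaled so that $\ell$ at latitude $r_j$ is proportional to $w_j \sin r_j$ (the $\sin r_j$ accounting for the metric factor), and then verifying that the combined normalization still yields the exact identity \eqref{mhmh} on all of $\bigoplus_{k\le t}\mathcal{H}^d_k$. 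A related subtlety is ensuring that the subsphere curves genuinely integrate the \emph{restricted} harmonics exactly: a degree-$t$ harmonic on $\S^d$ restricts on each latitude subsphere to a function lying in $\bigoplus_{l\le t}\mathcal{H}^{d-1}_l$ (times a latitude-dependent scalar), so the dimension-$(d-1)$ $t$-design property is exactly what is needed, but I would check this restriction claim carefully. The connectivity and Euler-cycle construction is routine given Lemma~\ref{lemma:G connection 0} and the argument in Lemma~\ref{lemma:G connection}; the only new ingredient is choosing the bridging meridian arcs so that they do not destroy the quadrature (they can be traversed in canceling pairs or absorbed into the length bound since their total length is $\lesssim t$), and I expect the bookkeeping of these bridges, rather than any deep obstruction, to be the most delicate remaining piece.
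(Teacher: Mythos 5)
Your latitude decomposition is genuinely different from the paper's construction (which places inductively obtained $t$-design curves on the boundaries $\partial B_{2\rho_t}(x)$ of caps centered at the Bondarenko--Radchenko--Viazovska design points $x\in X_t\subset\S^d$ and uses Samko's formula, Lemma~\ref{lemma:Samko analog}, to convert cap averages into point evaluations), but it runs into a structural obstruction that the paper's construction is specifically engineered to avoid. Your parallel latitude subspheres are pairwise disjoint, so the curves sitting on them can only be joined into one closed curve by adding meridian bridges, and these bridges are fatal to exactness. A line integral with respect to arc length is insensitive to orientation: traversing a bridge ``in canceling pairs'' \emph{doubles} its contribution instead of canceling it, and every bridge adds strictly positive amounts to both $\int_{\gamma_t}f$ and $\ell(\gamma_t)$. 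Since \eqref{mhmh} must hold \emph{exactly}, and no sub-arc of a meridian can itself integrate $\Pi_t$ correctly (a monomial such as $x_2^2$ vanishing on the meridian plane has zero line integral along it but positive sphere average), the quadrature identity is destroyed; ``absorbing'' the bridges into the length bound confuses an exact identity with an asymptotic estimate. The paper needs no auxiliary arcs at all: it chooses the cap radius $2\rho_t$ so large that boundaries of caps at adjacent design points intersect (Lemma~\ref{lemma:induction curve sphere}(ii), Lemma~\ref{topo}), arranges via Lemma~\ref{crux2}(iv) that consecutive sub-curves pass through a common point, and only then extracts an Euler cycle from the already-connected union of circles.

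The second gap is the weight matching, and it is not mere bookkeeping. Exactness forces the length at latitude $r_j$ to satisfy $\ell_j/\ell(\gamma_t)=w_j$ precisely; repetition of a sub-curve changes lengths only by integer factors, so it cannot realize generically irrational ratios $w_j/w_k$, and ``rescaling'' is unavailable because the subsphere radius $\sin r_j$ is fixed by the latitude (a continuous tuning mechanism does exist --- the free cap radius inside the inductive construction --- but you would have to introduce it and verify that the attainable length ranges cover the prescribed values). Moreover, your length tally contains an arithmetic error: $t\cdot t^{(d-1)(d-2)/2}=t^{1+(d-1)(d-2)/2}$, which equals $t^{d(d-1)/2}$ only when $d=2$; for $d=3$ it would assert a $t$-design curve in $\S^3$ of optimal length $\asymp t^2$, which your argument certainly does not produce. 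The discrepancy is exactly the hidden cost of the weights: Gauss--Gegenbauer weights for $(1-u^2)^{(d-2)/2}$ vary by a factor $\asymp t^{d-1}$ between equator and poles, while Theorem~\ref{thm:t-design length} applied in $\S^{d-1}$ bounds from below how short the polar curves can be, and enforcing the proportionality then yields $\ell(\gamma_t)\gtrsim t^{d-1}\cdot t^{(d-1)(d-2)/2}=t^{d(d-1)/2}$. So even if the bridge problem were solved, your route would at best reproduce the paper's bound, not improve on it; the paper sidesteps the weight issue entirely by using caps of equal radius around equal-weight design points, so that all sub-curves automatically carry the same length.
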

      We will  prove this theorem by induction on the dimension
      $d$. Similar to the proof of Theorem~\ref{thm:t-design}  we
      first show the existence of a  cycle (a formal sum of closed
      curves) that yields exact 
      integration. Then we use a combinatorial argument to build a 
connected  trajectory. 

\subsection{Some geometry on the sphere}
Let us first state few simple observations about  the action of the orthogonal group on $\S^d$. 
\begin{lemma}\label{lemma:easy 0}
Let  $d\geq 2$  and $A,B\subseteq\R^d$  be two  finite sets. If $0\not\in A$, then there is a rotation $O\in\mathcal{O}(d)$ such that $OA\cap B=\emptyset$.
\end{lemma}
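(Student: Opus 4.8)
The plan is to regard $\mathcal{O}(d)$ as a compact group equipped with its normalized Haar measure and to show that the rotations we must \emph{exclude} form a set of measure zero. For each pair $(a,b)\in A\times B$ consider the ``bad set''
\[
M_{a,b}=\{O\in\mathcal{O}(d):Oa=b\},
\]
and observe that $OA\cap B=\emptyset$ holds precisely when $O$ avoids every $M_{a,b}$. Since $A\times B$ is finite, it suffices to prove that each $M_{a,b}$ is negligible and then to pick $O$ in the (nonempty) complement of their union.

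First I would use the hypothesis $0\notin A$: every $a\in A$ satisfies $\|a\|>0$. Because orthogonal maps preserve the norm, $M_{a,b}$ is empty as soon as $\|b\|\neq\|a\|$, so only pairs with $\|a\|=\|b\|$ can contribute. For such a pair, fix any $O_0$ with $O_0a=b$ (one exists since the two vectors have equal nonzero length); then $M_{a,b}=O_0\cdot\mathrm{Stab}(a)$ is a left coset of the stabilizer $\mathrm{Stab}(a)=\{O:Oa=a\}$. The stabilizer of a nonzero vector fixes the line $\mathbb{R}a$ and acts orthogonally on $a^\perp\cong\mathbb{R}^{d-1}$, so it is a closed subgroup isomorphic to $\mathcal{O}(d-1)$, of dimension $\binom{d-1}{2}$.

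The key (and essentially only nontrivial) point is the dimension count. Since $d\geq 2$,
\[
\dim\mathcal{O}(d)-\dim\mathcal{O}(d-1)=\binom{d}{2}-\binom{d-1}{2}=d-1\geq 1,
\]
so each coset $M_{a,b}$ is a proper positive-codimension submanifold of $\mathcal{O}(d)$ and therefore has Haar measure zero. This is exactly where $d=1$ fails: there $\mathcal{O}(1)$ is finite and a single point is not negligible, and indeed the statement is false for $d=1$. A finite union of measure-zero sets is again measure zero, hence $\bigcup_{(a,b)\in A\times B}M_{a,b}$ cannot exhaust $\mathcal{O}(d)$. Any $O$ in the complement satisfies $Oa\neq b$ for all $a\in A$ and $b\in B$, i.e.\ $OA\cap B=\emptyset$, as required.

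The main obstacle is really just the clean identification of $M_{a,b}$ as a stabilizer coset together with this codimension bound; everything else is bookkeeping. If one prefers to avoid Lie-theoretic language, the same conclusion follows from the pushforward argument: the map $\mathcal{O}(d)\to\mathbb{S}^{d-1}$, $O\mapsto O(a/\|a\|)$, sends Haar measure to the uniform measure on the sphere, which is nonatomic for $d\geq 2$, so the preimage of the single point $b/\|b\|$---which is exactly $M_{a,b}$---has measure zero.
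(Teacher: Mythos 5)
Your proof is correct, but it takes a genuinely different route from the paper. The paper argues by induction on the dimension: it embeds $\mathcal{O}(d)$ into $\mathcal{O}(d+1)$ via $\bar O \mapsto j(\bar O)$, projects $A$ and $B$ onto the first $d$ coordinates, and splits into two cases according to whether $0$ lies in the projection of $A$ (if it does, a preliminary rotation $O_1$ first moves $A$ off the $\R e_{d+1}$ axis). You instead work in a fixed dimension and make a genericity argument: the bad rotations $M_{a,b}=\{O: Oa=b\}$ are either empty (when $\|a\|\neq\|b\|$) or left cosets $O_0\,\mathrm{Stab}(a)$ of a stabilizer isomorphic to $\mathcal{O}(d-1)$, and the codimension count $\binom{d}{2}-\binom{d-1}{2}=d-1\geq 1$ shows each such coset is Haar-negligible, so the finite union of bad sets cannot exhaust $\mathcal{O}(d)$. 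Both arguments are complete, and the hypothesis $0\notin A$ enters your proof in the right place (the stabilizer of $a=0$ would be all of $\mathcal{O}(d)$, and $a/\|a\|$ would be undefined in the pushforward variant). What your approach buys: no induction, no case analysis, a uniform treatment of all $d\geq 2$, and a strictly stronger conclusion --- Haar-almost every $O$ works --- which also makes transparent why $d=1$ fails ($\mathcal{O}(1)$ is finite, so single points are atoms). What the paper's approach buys: it is entirely elementary, using only linear algebra and no facts about Haar measure or measure-zero submanifolds, and its embedding $j$ is immediately reused in Corollary~\ref{lemma:easy 1} to produce a rotation fixing a prescribed vector. Your closing pushforward argument (the orbit map $O\mapsto O(a/\|a\|)$ sends Haar measure to the nonatomic uniform measure on $\S^{d-1}$) is the cleanest self-contained justification of the measure-zero claim and could replace the dimension count entirely.
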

\begin{proof}
For the sake of completeness  we provide the simple arguments. 

The claim is verified by induction. For the induction step we identify
$\cO (d)$ as a subgroup of $\cO (d+1)$ via the homomorphism
 $ \bar{O} \in \cO (d)  \mapsto j(\bar{O})\in \cO (d+1)$,
 $j(\bar{O})(x) = (\bar{O}\bar{x}, x_{d+1})  $ for $x =
 (\bar{x},x_{d+1})\in \R ^{d+1}$.

The case $d=2$ is obvious.  Consider now $A,B\subseteq\R^{d+1}$. Let $\bar{A},\bar{B}\subseteq \R^d$ be the orthogonal projections of $A,B$ onto the first $d$ coordinates. 

Case $1$:  $0\not \in \bar{A}$. By  the induction hypothesis  there
exists $\bar{O}\in\mathcal{O}(d)$ such that $\bar{O}\bar{A}\cap
\bar{B}=\emptyset$. Then $O= j(\bar{O})\in \mathcal{O}(d+1)$ satisfies
$OA\cap B = \emptyset$.

Case $2$:  $0\in \bar{A}$. Since $0\not\in A$, there is
$O_1\in\mathcal{O}(d+1)$ such that $O_1A \cap (\R
e_{d+1})=\emptyset$. Let $A_1 = O_1A$ and $\bar{A_1}$ be the
projection onto the first $d$ coordinates. 
Then   $0\not\in \overline{O_1 A}$, and by  the induction hypothesis
there is $\bar{O}_2\in\mathcal{O}(d)$ such that
$\bar{O}_2\overline{A_1 } \cap \bar{B}=\emptyset$. Then $O =
j(\bar{O}_2) O_1$ satisfies $OA \cap B = \emptyset $. 
\end{proof}

\begin{corollary}\label{lemma:easy 1}
Let $d\geq 2$, $v\in \S ^d$,  and  $A,B\subseteq\S^{d}$ two  finite
sets. If $\pm v\not\in A$, then there is a rotation
$O\in\mathcal{O}(d+1)$ such that $Ov=v$ and $OA\cap B=\emptyset$. 
\end{corollary}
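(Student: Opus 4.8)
The plan is to reduce the statement to the $d$-dimensional Lemma~\ref{lemma:easy 0} by restricting attention to the rotations that fix $v$, which form a copy of $\mathcal{O}(d)$ acting on the hyperplane $v^\perp$.

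First I would normalize $v$. Pick any $R\in\mathcal{O}(d+1)$ with $Rv=e_{d+1}$ and set $\tilde A=RA$, $\tilde B=RB$. For a rotation $O$ the assignment $O\mapsto O'=ROR^{-1}$ gives $O'e_{d+1}=ROv$ and $O'(\tilde A)=R(OA)$, so that $Ov=v$ and $OA\cap B=\emptyset$ hold if and only if $O'e_{d+1}=e_{d+1}$ and $O'\tilde A\cap\tilde B=\emptyset$. Moreover $\pm v\notin A$ translates into $\pm e_{d+1}\notin\tilde A$. Hence it suffices to prove the claim for $v=e_{d+1}$.

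In that case I would only use rotations from the stabilizer of $e_{d+1}$, i.e.\ the maps $j(\bar O)$ with $\bar O\in\mathcal{O}(d)$ and $j(\bar O)(\bar x,x_{d+1})=(\bar O\bar x,x_{d+1})$, exactly the embedding from the proof of Lemma~\ref{lemma:easy 0}. Let $\overline{\tilde A},\overline{\tilde B}\subseteq\R^d$ denote the projections onto the first $d$ coordinates. A unit vector projects to $0$ precisely when it equals $\pm e_{d+1}$, so the hypothesis $\pm e_{d+1}\notin\tilde A$ gives $0\notin\overline{\tilde A}$. Since $d\geq 2$, Lemma~\ref{lemma:easy 0} then furnishes $\bar O\in\mathcal{O}(d)$ with $\bar O\,\overline{\tilde A}\cap\overline{\tilde B}=\emptyset$.

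Finally I would check that $O'=j(\bar O)$ is the desired rotation: it fixes $e_{d+1}$ by construction, and if $O'\tilde a=\tilde b$ for some $\tilde a\in\tilde A$, $\tilde b\in\tilde B$, then comparing the first $d$ coordinates yields $\bar O\,\overline{\tilde a}=\overline{\tilde b}$, contradicting the choice of $\bar O$; thus $O'\tilde A\cap\tilde B=\emptyset$, and unwinding the normalization produces $O$. I do not expect a genuine obstacle here: the only real content is the observation that the stabilizer of $v$ acts on $v^\perp\cong\R^d$ as a full copy of $\mathcal{O}(d)$, which reduces everything to the previous lemma. The single place where care is needed is that the hypothesis $\pm v\notin A$ is used exactly to guarantee $0\notin\overline{\tilde A}$, since the projection onto $v^\perp$ annihilates precisely the directions $\pm v$.
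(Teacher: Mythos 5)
Your proposal is correct and follows essentially the same route as the paper: normalize $v$ to $e_{d+1}$, project onto the first $d$ coordinates, note that $\pm e_{d+1}\notin A$ gives $0\notin\bar A$, apply Lemma~\ref{lemma:easy 0}, and lift the resulting $\bar O$ via the embedding $j$. The only difference is that you spell out the conjugation argument behind the paper's ``without loss of generality'' and the final disjointness check, which the paper leaves implicit.
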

\begin{proof}
Without loss of generality we may assume that $v=e_{d+1}\in \S ^d$. 
Let $\bar{A},\bar{B}\subseteq \R^d$ be the orthogonal projections of
$A,B$ onto the first $d$ coordinates. Since $\pm e_{d+1}\not\in A$, we
deduce $0\not\in \bar{A}$. According to Lemma \ref{lemma:easy 0},
there is $\bar{O}\in\mathcal{O}(d)$ such that $\bar{O}\bar{A}\cap
\bar{B}=\emptyset$. The  $O= j(\bar{O}) \in\mathcal{O}(d+1)$ does the
job. 
\end{proof}

As in Section \ref{sec:general preparation subspheres}, we now consider spherical caps and their boundary
\begin{align*}
	B_{r}(x)&=\{y\in\S^d : \dist_{\S^d}(x,y)\leq r\},\\
	\partial B_{r}(x) &=  \{z\in\S^d : \|x\cos \rho_t-z\| = \sin r\},
\end{align*}
see \eqref{eq:Ga in proof} for $d=2$. Clearly $\partial B_{r}(x)$ is
homeomorphic (diffeomorphic)  to the  $d-1$-dimensional sphere $\S
^{d-1}$. For our analysis we will use the following homeomorphism.

Let $x\in\S^d$, $0<r<\frac{\pi}{2}$, and $O = O_x \in \mathcal{O} (d+1)$ a 
matrix in the orthogonal group  acting  on $\R^{d+1}$ such that $x=Oe_{d+1}$, where $e_{d+1}=(0,\ldots,0,1)^\top\in\mathbb{R}^{d+1}$. Now set 
\begin{equation}
  \label{eq:c10}
  \phi_{x,r}(z) =O\begin{pmatrix}
\sin r z\\
\cos r
\end{pmatrix} \qquad z\in \S ^{d-1} \, 
\end{equation}
and recall that $e_d\in\mathbb{R}^{d}$ is the north pole in $\S^{d-1}$.
\begin{lemma}\label{lemma:induction curve sphere}
The map $\phi _{x,r}: \S ^{d-1} \to \S ^d$ has the following
properties.

(i) $\phi _{x,r}$ is a diffeomorphism between $\S^{d-1}$ and $\partial
B_r(x)$.

(ii) Let $x,y\in \S ^{d-1}$ with $\dist_{\S^d}(x,y) < r\leq \frac{\pi}{2}$. Then there
is a unique radius 
$\sigma \in [\pi /3, \pi /2] $, such that 
$$
\phi _{x,r} \big( \partial B_\sigma (e_d) \big) = \partial B_r(x) \cap
\partial B_r(y) \, .
$$
In particular, the intersection $\partial B_r(x) \cap
\partial B_r(y) $ is diffeomorphic to $\S ^{d-2}$. 
\end{lemma}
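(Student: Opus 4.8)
The plan is to prove both parts directly from the explicit formula for $\phi_{x,r}$ in \eqref{eq:c10}, using the rotation invariance of the whole setup to reduce to the case $x = e_{d+1}$. Since $\phi_{x,r}(z) = O\bigl(\sin r\, z, \cos r\bigr)^\top$ is the composition of the linear embedding $z \mapsto (\sin r\, z, \cos r)^\top$ with the rotation $O$, and rotations are diffeomorphisms of $\S^d$ carrying $\partial B_r(e_{d+1})$ to $\partial B_r(x)$, it suffices to understand the map $z \mapsto (\sin r\, z, \cos r)^\top$.

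For part (i), I would observe that the image of $z \in \S^{d-1}$ under $z \mapsto (\sin r\, z, \cos r)^\top$ satisfies $\|(\sin r\, z, \cos r)\| = \sin^2 r + \cos^2 r = 1$, so the image lies in $\S^d$, and its last coordinate is the constant $\cos r$, so by \eqref{eq:B r} it lies in $\partial B_r(e_{d+1})$. Conversely any point of $\partial B_r(e_{d+1})$ has last coordinate $\cos r$ and its first $d$ coordinates have norm $\sin r$, hence equal $\sin r\, z$ for a unique $z \in \S^{d-1}$. This gives a smooth bijection with smooth inverse (the inverse simply divides the first $d$ coordinates by $\sin r > 0$), so it is a diffeomorphism; composing with the rotation $O$ preserves this, proving (i).

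For part (ii), I would again reduce via $O$ to describing $\partial B_r(x) \cap \partial B_r(y)$ in the model coordinates. A point $w \in \partial B_r(x)$ is $w = \phi_{x,r}(z)$ for $z \in \S^{d-1}$, and the condition $w \in \partial B_r(y)$ reads $\langle y, w\rangle = \cos r$ by \eqref{eq:B r}. Writing $\bar{y} = O^\top y$ and splitting $\bar{y} = (\bar{y}', \bar{y}_{d+1})$ into its first $d$ and last coordinates, the condition becomes $\sin r\,\langle \bar{y}', z\rangle + \cos r\,\bar{y}_{d+1} = \cos r$, a single affine equation in $z \in \S^{d-1}$. This is precisely the equation of a boundary sphere $\partial B_\sigma(u)$ in $\S^{d-1}$ for a suitable center $u = \bar{y}'/\|\bar{y}'\|$ and radius $\sigma$ determined by $\cos\sigma = \langle u, z\rangle = \frac{\cos r(1 - \bar{y}_{d+1})}{\sin r\,\|\bar{y}'\|}$. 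The remaining work is to normalize the center to $e_d$ (using a further rotation of $\S^{d-1}$ fixing the picture, which is legitimate by the freedom in choosing $O$, or by absorbing it into $\phi_{x,r}$) and to verify the range $\sigma \in [\pi/3, \pi/2]$.

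The main obstacle I anticipate is establishing the explicit bounds $\sigma \in [\pi/3, \pi/2]$ and that $\sigma$ depends only on the distance $\dist_{\S^d}(x,y)$, not on the ambient position. I would express $\cos\sigma$ purely in terms of $\cos r$ and $\langle x, y\rangle = \cos(\dist_{\S^d}(x,y))$ using the geometry of the two congruent caps: by symmetry the intersection sphere lies in the bisecting hyperplane, and a direct spherical-trigonometry computation should give $\cos\sigma$ as an increasing function of $\dist_{\S^d}(x,y)$ for fixed $r$, with the extreme values attained as $\dist_{\S^d}(x,y) \to 0$ and as the caps become tangent. The constraint $\dist_{\S^d}(x,y) < r \leq \pi/2$ should pin the range down to $[\pi/3, \pi/2]$, which I would verify by evaluating the monotone expression at the two endpoints of the admissible parameter range. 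Once $\sigma$ is identified, the diffeomorphism with $\S^{d-2}$ is immediate from part (i) applied one dimension down, since $\partial B_\sigma(e_d) \subset \S^{d-1}$ is diffeomorphic to $\S^{d-2}$.
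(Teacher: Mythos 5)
Your proposal is correct and takes essentially the same route as the paper: after rotating $x$ to $e_{d+1}$, the paper's WLOG placement of $y$ in the $e_d$--$e_{d+1}$ plane is exactly your recentering of $u$ to $e_d$ via the rotational freedom in the choice of $O$, your level-set equation $\langle u,z\rangle=\cos\sigma$ reproduces the paper's identity $\cos\sigma\,\sin r=\tau\cos r$ with $\tau=\tan\bigl(\tfrac{1}{2}\dist(x,y)\bigr)$, and the bound $\sigma\in[\pi/3,\pi/2]$ follows in both arguments from $\dist(x,y)<r\leq\pi/2$. One small correction to your sketch: the relevant upper endpoint of the admissible range is $\dist(x,y)=r$ (the center of one cap lying on the other's boundary), not tangency of the caps (which occurs at distance $2r$ and would give $\cos\sigma=1$); evaluating your monotone expression $\cot r\,\tan\bigl(\tfrac{\delta}{2}\bigr)$ at $\delta=r$ gives $\cos\sigma=\frac{\cos r}{1+\cos r}\leq\frac{1}{2}$, which is exactly what is needed.
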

\begin{proof}[Proof of Lemma \ref{lemma:induction curve sphere}]
(i) If $\|z\|=1$, then $\| \phi _{x,r} (z) \| ^2 = \|z\|^2
\sin ^2 r + \cos ^2 r = 1$, and
\begin{align*}
\dist(x,\phi _{x,r}(z)) &= \dist(Oe_{d+1}, O(z \sin r,
\cos r ) \\
&= \arccos \langle e_{d+1}, (z \sin r,
\cos r )\rangle = r \, .
\end{align*}
Clearly, $\phi _{x,r}$ is a bijection between $\S ^{d-1}$ and
$\partial B_r(x)$.

(ii) For $z\in \R ^{d+1} $ we write $z=(z', z_{d+1}) = (z'',
z_d,z_{d+1})$ with $z' \in \R ^d $ and $z'' \in  \R ^{d-1} $. 
By  applying a suitable rotation, we may  assume without loss of
  generality that $x= e_{d+1}$ and $y = \sqrt{1-y_{d+1}^2} \, e_d +
  y_{d+1}e_{d+1}= (0,\sqrt{1-y_{d+1}^2},y_{d+1})$. Then the assumption $\dist(x,y) = \arccos \langle x,y\rangle
  <r$ yields $\langle x,y\rangle = y_{d+1} >\cos r$.

  Now take  $z\in \partial B_{r } (e_{d+1}) \cap
\partial B_{r } (y)$. By Step (i),  
$$
\partial B_r(e_{d+1}) = \phi
_{e_{d+1},r}(\sdd) = \{ (z' \sin r, \cos r: z' \in \sdd \},
$$
so
$z_{d+1} = \cos r $. Since $\dist (z,y) = \arccos \langle z,y
\rangle = r$ as well, we obtain

  \begin{align*}
\langle z,y \rangle =  
 z_d \sqrt{1-y_{d+1}^2} + z_{d+1}  y_{d+1} =  z_d \sqrt{1-y_{d+1}^2} +
    \cos r \,y_{d+1} = \cos r\, .
  \end{align*}
Solving for $z_d$,   this implies that 
  \begin{equation}
    \label{eq:chmas}
   z_d = \cos r \,
  \frac{1-y_{d+1}}{\sqrt{1-y_{d+1}^2}} \, .  
  \end{equation}
Let us abbreviate the occurring fraction by $\tau =
\frac{1-y_{d+1}}{\sqrt{1-y_{d+1}^2}}$. Then 
  \begin{equation}\label{eq:christmas}
  \begin{split}
  \|z''\|^2 &= 1-z_{d+1}^2 - z_d^2\\
  & = 1 - \cos  ^2 r - \tau ^2 \cos ^2 r
   \\
  &= \sin ^2 r - \tau ^2 \cos ^2 r   =:s ^2 \,. 
  \end{split}
  \end{equation}
We may switch from $z''$ to $sz''$ with $z''\in\S^{d-2}$, so that every point in $z\in \partial B_r(x) \cap \partial B_r(y)$ has
coordinates 
\begin{equation}
  \label{eq:c15}
  z= \begin{pmatrix}  s z'' 
    \\
 \tau \cos r \\
\cos  r
\end{pmatrix} \, ,\qquad z''\in \S ^{d-2}\,.
\end{equation}

By comparison,  a point $z'\in \partial B_\sigma (e_d)\subseteq \sdd $ is of the
  form $(z'' \sin \sigma , \cos \sigma )$ for $z'' \in \S
  ^{d-2}$. Consequently,
  \begin{equation}
    \label{eq:c14}
    \phi _{e_{d+1},r}(z') = \begin{pmatrix}  z'' \sin \sigma \,  \sin r
    \\
\cos \sigma \,\sin r\\
\cos r
\end{pmatrix} \, .
    \end{equation}
  
We have to show that every point in $ \partial B_{r } (x) \cap
\partial B_{r } (y)$ can be represented in this way. For 
\eqref{eq:c15} and \eqref{eq:c14} to represent the same set, we need to verify that 
 the following identities
\begin{equation}
  \label{eq:cc12}
  s=\sin \sigma \, \sin r \qquad \text{ and } \qquad  \cos r \,\tau = \cos
  \sigma \, \sin r 
\end{equation}
can be satisfied with a suitable choice of $\sigma $.
Clearly $\sigma $ is determined by 
\begin{equation}
  \label{eq:c13}
  \sin \sigma  = \frac{s}{\sin r }  \, .
\end{equation}
Then using \eqref{eq:christmas}
$$
\cos ^2\sigma \, \sin ^2 r  = (1-\sin ^2\sigma  ) \sin ^2 r = \sin
^2 r - s^2 = \tau ^2 \, \cos ^2r  \, ,
$$
and the second identity in \eqref{eq:cc12} is also satisfied.

Finally, since $y_{d+1} = \cos r $ and $\tau ^2 =
\frac{1-y_{d+1}}{1+y_{d+1}}$, we estimate the size of $\sigma $ as
\begin{align*}
  \sin ^2\sigma  = \frac{s^2}{\sin ^2 r} &= 1-\frac{\cos ^2 r}{\sin ^2r} \,
                     \frac{1-y_{d+1}}{1+y_{d+1}} \\
&\geq 1-\frac{\cos ^2 r}{\sin ^2r} \,
                     \frac{1-\cos r}{1+\cos r} \,.
\end{align*}
For $u=\cos r\geq 0$, we observe 
$$
1-\frac{\cos ^2 r}{\sin ^2 r} \,
                     \frac{1-\cos r}{1+\cos r} = 1-\frac{u^2}{1-u^2}\frac{1-u}{1+u}=1-\frac{u^2}{(1+u)^2}\geq \frac{3}{4}\,,
$$
so  that we obtain $\sin ^2\sigma \geq \frac{3}{4}$. Consequently $\pi/3 \leq \sigma \leq \pi
/2$, which means that $ B_\sigma (e_d)$ covers a fixed portion of
$\sdd $. 
\end{proof}

Next we check how curves in $\S ^{d-1}$, in particular, $t$-design
curves, are mapped by $\phi _{x,r}$.

\begin{lemma} \label{crux2}
    Suppose that $\gamma $ is  a $t$-design curve $\gamma$ in
    $\S^{d-1}$. Let $x\in\S^d$ and  
    $0<r<\frac{\pi}{2}$.
\begin{itemize}
\item[(i)] Then $\gamma_{x,r}=\phi _{x,r}
    \circ \gamma $ is a curve in $\partial B_r(x) \subseteq \S^d$ with
    length $\ell(\gamma_{x,r})= \ell(\gamma) \,  \sin r $, such
    that 
\begin{equation*}
\frac{1}{\ell(\gamma_{x,r})}\int_{\gamma_{x,r}}f = \int_{\partial
  B_{r}(x)} f,\qquad \text{ for all } f\in \Pi_t \, .
\end{equation*}
\item[(ii)] For a given point  $w\in \partial B_r(x)$, there is a $t$-design
curve $\gamma'$ in $\S ^{d-1}$, such that $w$ lies on $\phi
_{x,r} \circ \gamma '$. 
\item[(iii)] If $\gamma$ consists  a union of circles, then so does $\gamma_{x,r}$ and $\gamma'$ can also be chosen to do so.

\item[(iv)]  Assume that $\gamma$ and $\Psi\subseteq \S^d$ both consist of 
     a finite union of circles  and $w\in \Psi$. Then there
     exists a $t$-design curve $\gamma ''$ in $\sdd $ consisting of a
     union of circles,  such that $w$ is contained in the trajectory $\Gamma''_{x,r}$ of the curve $\phi
     _{x,r} \circ \gamma ''$ and the intersection $\Gamma ''_{x,r}
     \cap \Psi$ is      finite.
\end{itemize}
\end{lemma}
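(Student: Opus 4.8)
The plan is to realize $\gamma''$ as a rotated copy of $\gamma$ and then to kill, by a dimension argument, the only obstruction to a finite intersection with $\Psi$, namely the coincidence of whole circles. First I would record two facts. Since $\Pi_t$ and the surface measure on $\sdd$ are $\mathcal{O}(d)$-invariant, for every $R\in\mathcal{O}(d)$ the rotated curve $R\gamma$ is again a $t$-design curve (indeed $\int_{R\gamma}f=\int_\gamma f\circ R$ and $f\circ R\in\Pi_t$), and it still consists of circles; thus any $\gamma''=R\gamma$ is an admissible candidate, and this already subsumes parts (ii) and (iii). Second, writing $\phi_{x,r}$ as the restriction of an affine injection $\R^d\to\R^{d+1}$, the image $\phi_{x,r}(RC)$ of any circle $C\subseteq\sdd$ is again a circle in $\S^d$, and any two \emph{distinct} circles in $\S^d$ meet in at most two points, because distinct affine $2$-planes intersect in an affine subspace of dimension at most $1$. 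Consequently $\Gamma''_{x,r}\cap\Psi=\bigcup_{C,D}\bigl(\phi_{x,r}(RC)\cap D\bigr)$ is finite \emph{unless} some image circle $\phi_{x,r}(RC)$ coincides with a circle $D$ of $\Psi$.

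Next I would set $z_0:=\phi_{x,r}^{-1}(w)\in\sdd$ (using that $w\in\partial B_r(x)$, which is implicit since we want $w\in\Gamma''_{x,r}\subseteq\partial B_r(x)$) and encode the two requirements as conditions on $R$. The requirement $w\in\Gamma''_{x,r}$ is equivalent to $z_0\in R\Gamma$, i.e.\ $R^{-1}z_0\in\Gamma$, where $\Gamma$ is the trajectory of $\gamma$. Since the orbit map $R\mapsto R^{-1}z_0$ is a submersion $\mathcal{O}(d)\to\sdd$ and $\Gamma$ is a $1$-dimensional union of circles, the set $\mathcal{R}:=\{R\in\mathcal{O}(d):R^{-1}z_0\in\Gamma\}$ is a nonempty submanifold of dimension $\binom{d}{2}-(d-1)+1=\binom{d-1}{2}+1$. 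The failure of finiteness means $\phi_{x,r}(RC)=D$ for one of the finitely many pairs $(C,D)$, with $C$ ranging over the circles of $\gamma$ and $D$ over those of $\Psi$; this forces $D\subseteq\partial B_r(x)$ and $RC=E$ with $E:=\phi_{x,r}^{-1}(D)$, so the bad rotations form $\mathcal{B}_{C,D}=\{R:RC=E\}$, which is either empty or a coset of the setwise stabilizer $\mathrm{Stab}(C)$.

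The heart of the argument is a dimension comparison: a circle in $\sdd$ has $\dim\mathrm{Stab}(C)\le 1+\binom{d-2}{2}$ (equality for great circles, strictly less for small ones), while for $d\ge3$ one checks $1+\binom{d-2}{2}<\binom{d-1}{2}+1=\dim\mathcal{R}$, the gap being $d-2$ because $\binom{d-1}{2}=\binom{d-2}{2}+(d-2)$. Hence each $\mathcal{B}_{C,D}$ meets $\mathcal{R}$ in a subset of dimension strictly smaller than $\dim\mathcal{R}$, so $\mathcal{R}\cap\mathcal{B}_{C,D}$ is a null set in $\mathcal{R}$; a finite union of such null sets cannot exhaust the manifold $\mathcal{R}$. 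I would therefore choose $R\in\mathcal{R}\setminus\bigcup_{C,D}\mathcal{B}_{C,D}$ and set $\gamma''=R\gamma$: then $w=\phi_{x,r}(z_0)\in\Gamma''_{x,r}$, no image circle coincides with a circle of $\Psi$, and $\Gamma''_{x,r}\cap\Psi$ is finite.

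The main obstacle I anticipate is precisely this dimension bookkeeping, and in particular guarding against the degenerate possibility that $\mathcal{R}$ is entirely contained in some $\mathcal{B}_{C,D}$. Using the full rotation freedom encoded by the $\bigl(\binom{d-1}{2}+1\bigr)$-dimensional $\mathcal{R}$, rather than only rotations fixing $z_0$, is what makes the count go through: it avoids the delicate case—relevant already for $d=3$, where $\mathrm{Stab}(z_0)\cong\mathcal{O}(2)$—in which a circle of $\gamma$ is a latitude about $z_0$ and hence invariant under the point stabilizer, which would make a coincidence unbreakable by stabilizer rotations alone. I would also take care to verify that $\mathcal{R}$ genuinely has the asserted dimension by restricting to the regular part of $\Gamma$ away from its finitely many self-intersections, which is enough since we only need a positive-measure supply of good rotations.
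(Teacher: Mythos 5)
Your proposal proves parts (ii)--(iv) but never addresses part (i), and part (i) is where the quadrature content of the lemma sits. The assertions that $\ell(\gamma_{x,r})=\sin r\,\ell(\gamma)$ and that $\tfrac{1}{\ell(\gamma_{x,r})}\int_{\gamma_{x,r}}f=\int_{\partial B_r(x)}f$ for all $f\in\Pi_t$ require three observations absent from your text: the differential of $\phi_{x,r}$ scales speeds by exactly $\sin r$ (giving the length formula and reducing the line integral over $\gamma_{x,r}$ to one over $\gamma$); for $f\in\Pi_t$ the composition $f\circ\phi_{x,r}(z)=f\bigl(O(z\sin r,\cos r)\bigr)$ is again the restriction to $\S^{d-1}$ of a polynomial of degree at most $t$, so that the $t$-design property of $\gamma$ can be invoked; and $\phi_{x,r}$ pushes the normalized measure on $\S^{d-1}$ forward to the normalized measure \eqref{eq:cd0} on $\partial B_r(x)$, which converts $\int_{\S^{d-1}}f\circ\phi_{x,r}$ into $\int_{\partial B_r(x)}f$. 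These steps are routine (the paper does them by first treating $x=e_{d+1}$ and then rotating), but part (i) is precisely the statement used throughout the induction in Section 6, and the length formula feeds the estimates there, so its omission is a genuine gap in a proof of the lemma as stated.

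Your argument for (iv), by contrast, is correct (for $d\geq 3$, the only case in which (iv) is ever applied; for $d=2$ the statement degenerates since the only circle in $\S^1$ is $\S^1$ itself) and genuinely different from the paper's. Both proofs start from the same dichotomy: distinct circles meet in at most two points, so only the coincidence of a whole circle of $\Gamma''_{x,r}$ with a whole circle of $\Psi$ can spoil finiteness. The paper then pulls $\Psi\cap\partial B_r(x)$ back to $\S^{d-1}$, notes that coinciding circles must share their center, and invokes Corollary~\ref{lemma:easy 1} --- an elementary, inductively proved avoidance statement for finite sets --- to produce a rotation fixing $v=\phi_{x,r}^{-1}(w)$ that moves the finitely many centers of $\gamma$'s circles off those of the pulled-back $\Psi$; the normalization $\pm v\notin C_\gamma$ is needed there exactly to exclude the latitude circles you single out. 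You instead enlarge the search space to $\mathcal{R}=\{R\in\mathcal{O}(d):R^{-1}z_0\in\Gamma\}$ and kill coincidences by a dimension count: $\dim\mathcal{R}=\binom{d-1}{2}+1$, while each bad set is a coset of a circle stabilizer of dimension at most $1+\binom{d-2}{2}$, a gap of $d-2>0$, so finitely many bad sets have measure zero in (a circle-preimage component of) $\mathcal{R}$ and cannot exhaust it. The count is right --- a great circle has stabilizer $\mathcal{O}(2)\times\mathcal{O}(d-2)$, a small one $\mathcal{O}(2)\times\mathcal{O}(d-3)$, so your parenthetical ``strictly less for small circles'' fails at $d=3$ where both have dimension $1$, but only the upper bound is used --- and your final care about working on the preimage of a single circle, which is a genuine compact submanifold, closes the loophole about $\Gamma$ not being a manifold. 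What the paper's route buys is elementary constructiveness (finite combinatorics plus induction on dimension, no Lie-theoretic bookkeeping); what yours buys is that rotations need not fix $z_0$, so the degenerate configuration $\pm v\in C_\gamma$ never requires separate treatment.
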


In the following, we refer to $\gamma_{x,r}$ as a $t$-design curve for $\partial B_{r}(x)$.

\begin{proof}
(i)   First we consider the north pole $x=e_{d+1}$. The curve $\gamma_{e_{d+1},r}=
\phi _{e_{d+1},r} \circ \gamma =  \left(\sin r \,\gamma , \cos r \right)^\top$
has arc length  $\ell(\gamma_{e_{d+1},r})=\sin r \,\ell(\gamma)$,  and we
derive 
\begin{align*}
\frac{1}{\ell(\gamma_{e_{d+1},r})}\int_{\gamma_{e_{d+1},r}}f & = \frac{1}{\sin r\, \ell(\gamma)}\int_{\sin r\,\gamma}f(\;\cdot\;,\cos r)\\
& = \frac{1}{\sin r\, \ell(\gamma)} \int_0^1 f(\sin r\,\gamma(s),\cos r)\|\sin r\, \dot{\gamma}(s)\| \mathrm{d}s\\
& = \frac{1}{\ell(\gamma)}\int_{\gamma}f(\sin r \, \;\cdot\;,\cos r).
\end{align*}
For $f\in \Pi_t$ and $y\in\S ^{d-1}$, the mapping $y  \mapsto f\circ \phi _{e_{d+1},r}(y) =
f(y \sin r ,\cos r)$ is a polynomial of degree $t$ restricted to
$\S^{d-1}$. The $t$-design property leads to 
\begin{align*}
\frac{1}{\ell(\gamma_{e_{d+1},r})}\int_{\gamma_{e_{d+1},r}}f & = \int_{\S^{d-1}}f(\sin r \,\;\cdot\;,\cos r)\\
& =\int_{\partial B_{r}(e_{d+1})}f\;,
\end{align*}
where the latter equality is due to the normalization $\int_{\S^{d-1}}1 = 1 = \int_{\partial B_{r}(e_{d+1})}1$, cf.~\eqref{eq:cd0}. 

For general $x\in\S^d$, there is a rotation matrix $O$ such that
$x=Oe_{d+1}$ and rotational invariance of $\dist$ yields $
B_r(x)=OB_r(e_{d+1})$ and $\partial B_{r}(x) = O\partial
B_{r}(e_{d+1})$. The curve $\gamma_{x,r}=O\gamma_{e_{d+1},r}=\phi
_{x,r} \circ \gamma $
satisfies $\|\dot{\gamma}_{x,r}\| = \|\dot{\gamma}_{e_{d+1},r}\|$. For
$f\in\Pi_t$, we also have  $f\circ O\in\Pi_t$ and deduce 
\begin{align*}
\int_{\partial B_{r}(x)} f = \int_{O\partial B_{r}(e_{d+1})} f & = \int_{\partial B_{r}(e_{d+1})} f\circ O\\
& = \frac{1}{\ell(\gamma_{e_{d+1},r})}\int_{\gamma_{e_{d+1},r}}f\circ O = \frac{1}{\ell(\gamma_{x,r})}\int_{\gamma_{x,r}}f.
\end{align*}

(ii) If $\gamma$ is a $t$-design curve in $\sdd$, then for every
orthogonal matrix $U \in \cO (d)$ the rotated curve  $\gamma'=U\gamma
$ is also a $t$-design curve in $\sdd$. By suitably choosing $U$, we
can always achieve that a given point $v\in \sdd$ lies on the trajectory of $U\gamma
$. Now let $w \in \partial   B_r(x)\subseteq \S ^d$ and $v\in \sdd$ its pre-image under $\phi
_{x,r}$.  Consequently $w=\phi _{x,r}(v) $ lies on the curve $\gamma'_{x,r} = \phi _{x,r} \circ
(U\gamma)$.

(iii) Clearly, if $\gamma $ is a union of (Euclidean)  circles in $\sdd$, then
$\gamma_{x,r}=O(\sin r\,\gamma,\cos r)^\top$ is a union of
Euclidean circles in $\S ^d$. The same holds for $\gamma'=U\gamma
$.

(iv) Let $v =  \phi _{x,r} ^{-1}(w)$ and  let $\Psi_0 = \phi _{x,r} ^{-1} \circ (\Psi\cap \partial
  B_{r}(x)) $. Then $\Psi _0$  is again a finite union of
  circles or arcs of circles.  Two circles are either disjoint, or they intersect in one
  or two points, or they coincide. This can happen only when the two
  circles have the same center. Let $C_\gamma \subseteq \sdd $ be the set of centers
  of the circles of the given curve $\gamma $ and $C_{\Psi_0} $ be the centers
  of $\Psi_0$ (including centers of parts of circles).

 We  may assume that $\pm v\not\in C_\gamma$ (otherwise apply a
 rotation to $\gamma $).

 Since $C_\gamma$ and $C_{\Psi_0}$ are both 
  finite and $\pm v\not\in C_\gamma$, Corollary~\ref{lemma:easy 1}
  yields an 
  orthogonal matrix $U \in \cO (d)$ such that $Uv=v$ and 
  $$
  U C_\gamma \cap C_{\Psi_0} = \emptyset \, .
  $$

  The curve $\gamma '' = U \gamma $ consists of circles whose centers
  are disjoint from those of $\Psi_0 $, consequently $\gamma ''$ and
  $\Psi_0$ have only finitely many points in common. After mapping via
  $\phi _{x,r}$ we obtain a trajectory $\Gamma ''_{x,r}=\phi _{x,r}
  \circ \gamma ''$ in $\S ^d$
  consisting of circles, such that $w \in \Gamma '' _{x,r}$ and
  $\Gamma _{x,r}'' \cap \Psi $ is finite. 
  \end{proof}

  \begin{lemma}
    \label{topo}
  Let $\gamma$ be a closed curve in $\S^d$ with trajectory $\Gamma$, so that its covering radius satisfies 
  $r< \pi /4$. Then $\Gamma$ intersects the boundary $\partial  B_\sigma
  (x)$ for all $x\in \sdd$ and $\sigma \in ( \pi /4, \pi /2)$:
  $$
  \Gamma \cap \partial  B_\sigma
  (x) \neq \emptyset \, .
  $$
    \end{lemma}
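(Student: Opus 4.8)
The plan is to reduce the statement to a one-variable intermediate value argument along the curve. Fix $x\in\S^d$ and $\sigma\in(\pi/4,\pi/2)$, and consider the continuous function
$$
g(s)=\dist(x,\gamma(s))=\arccos\langle x,\gamma(s)\rangle ,\qquad s\in[0,1].
$$
Since $\gamma$ is continuous (and closed), $g$ is continuous on $[0,1]$, and it suffices to exhibit parameters $s_0,s_1$ with $g(s_0)<\sigma$ and $g(s_1)>\sigma$: the intermediate value theorem then yields some $s$ with $g(s)=\sigma$, i.e.\ $\gamma(s)\in\Gamma\cap\partial B_\sigma(x)$, which is exactly the claim. (The trajectory $\Gamma=\gamma([0,1])$ is compact and connected as the continuous image of $[0,1]$, so this is legitimate.)

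For the near side I would invoke the covering radius hypothesis at the point $x$ itself. By the definition of the covering radius in \eqref{eq:cov rad} and compactness of $\Gamma$, the infimum $\inf_{y\in\Gamma}\dist(x,y)$ is attained and is at most $r$. Hence there is $s_0$ with $\dist(x,\gamma(s_0))\le r<\pi/4<\sigma$, so $g(s_0)<\sigma$. The key observation for the far side is that one should test the covering property not at $x$ but at the antipode $-x$. This gives $s_1$ with $\dist(-x,\gamma(s_1))\le r$, and then the triangle inequality on $\S^d$ together with $\dist(x,-x)=\pi$ yields
$$
\dist(x,\gamma(s_1))\ \ge\ \dist(x,-x)-\dist(-x,\gamma(s_1))\ \ge\ \pi-r\ >\ \tfrac{3\pi}{4}\ >\ \sigma ,
$$
so $g(s_1)>\sigma$. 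Combining the two bounds with the intermediate value theorem finishes the proof.

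The argument is elementary, and there is no substantial obstacle; the only point requiring genuine care is the realization that the far side of the curve must be reached by applying the covering property at the antipodal point $-x$. The role of the hypothesis $r<\pi/4$ is precisely to make both bounds strict for every admissible $\sigma$: it forces $r<\pi/4<\sigma$ on the near side and $\pi-r>3\pi/4>\pi/2>\sigma$ on the far side, so that $g$ genuinely straddles the value $\sigma$ regardless of $x$ and of $\sigma\in(\pi/4,\pi/2)$.
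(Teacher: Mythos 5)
Your proposal is correct and follows essentially the same route as the paper's proof: apply the covering property at $x$ and at the antipode $-x$, use the triangle inequality with $\dist(x,-x)=\pi$ to get a point of $\Gamma$ at distance greater than $\sigma$ from $x$, and conclude by the intermediate value theorem applied to $s\mapsto\dist(x,\gamma(s))$. The bounds $r<\pi/4<\sigma$ and $\pi-r>3\pi/4>\sigma$ that you isolate are exactly those used in the paper.
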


    \begin{proof}
      Since the covering radius of $\Gamma $ is $r< \pi /4$, there
      exists a point $z\in \Gamma $, such that $\dist(z,x) \leq
      r$ and thus $z\in B_r(x) \subseteq B_\sigma (x)$. Similarly,
      there exists a point $\tilde{z}\in \Gamma $, such that $\dist(\tilde z,-x) \leq
      r$ and thus $\tilde z\in B_r(-x) \subseteq B_\sigma (-x)$. Since
      $$ \pi  = \dist(x,-x) \leq \dist(x,\tilde{z}) +
      \dist(\tilde{z},-x) \, ,
      $$
      we see that $\dist(\tilde{z},x) \geq \pi - r \geq
      \sigma$ and $\tilde{z} \not \in B_\sigma (x)$. Consequently, the 
      continuous function $\psi (s) = \dist(\gamma (s),x) $
      takes values $<\sigma $ and $>\sigma $. As a consequence, there exists
      $s_0$, such that $\psi (s_0) =    \dist(\gamma (s_0),x) =
\sigma $. In other words,  the point $\gamma (s_0) $ is in $\partial B_\sigma
      (x)$ or $\Gamma \cap  \partial B_\sigma
      (x)\neq \emptyset  $.
    \end{proof}

    By combining Lemma~\ref{crux2} with Lemma~\ref{lemma:Samko analog}, we
    obtain that 
\begin{equation}
  \label{eq:c11}
  \frac{1}{\ell(\gamma_{x,r})} \int _{\gamma _{x,r}} f = \int
  _{\partial B_r(x)} f = c_{d,k}(r) f(x) \qquad \text{ for all } f\in \cH
  _k ^d \, .
\end{equation}
As in Section~5 this formula paves the way to make a transition from
$t$-design points to $t$-design curves. 

\subsection{Part I of the proof of Theorem \ref{thm:Sd existence} }
\begin{proof}
 We first prove the existence of a cycle (a formal sum of closed, piecewise smooth
 curves) in $\S ^d$ that yields exact integration for $\Pi _t$.  
We prove this  claim by induction on the dimension $d$. The case $d=2$ corresponds to
Theorem \ref{thm:t-design} and shows that the optimal design curve  can be
realized as  a  union of circles. We now assume that the claim holds
for $d-1$ with unions of circles.

For $0<r<\frac{\pi}{2}$ and $x\in\S^d$, the induction hypothesis
combined  with Lemma \ref{crux2}(i)  shows that there is a sequence of $t$-design curves $\left(\gamma_{x,r,t}\right)_{t\in\N}$ for $\partial B_r(x)$ of length 
\begin{equation}\label{eq:length induction G}
	\ell(\gamma_{x,r,t})\lesssim \sin r \, t^{\frac{(d-1)(d-2)}{2}} \,,
\end{equation}
whose trajectories $\Gamma_{x,r,t}=\gamma_{x,r,t}([0,1])$ consist of unions of circles.

As in the proof of Theorem \ref{thm:t-design}, we use a sequence $(X_t)_{t\in\N}$ of asymptotically
optimal $t$-design points in $\S^d$ and verify that the cycle $\gamma
_t^r = \dotplus _{x\in X_t} \gamma _{x,r,t} $ associated to the
trajectory  $\Gamma^r_{t}:=\bigcup_{x\in X_t} \Gamma_{x,r,t}$ provides
an exact quadrature on $\Pi _t$. 
A careful choice of the radius $r$ will then yield a single closed
curve instead of a cycle. 

For the component $\mathcal{H}^2_0=\spann\{1\}$, we observe 
\begin{equation*}
\frac{1}{\ell(\gamma^r_t)}\int_{\gamma^r_t} 1 = 1 = \int_{\S^d} 1.
\end{equation*}
Next, we consider $f\in\mathcal{H}^d_k$  for $1\leq k\leq t$. On the
one hand, since $\ell (\gamma _r^t) = |X_t| \ell (\gamma _{x,r,t})$, Lemma \ref{crux2}(i) yields
\begin{align*}
\frac{1}{\ell(\gamma^r_t)}\int_{\gamma^r_t} f & = \frac{1}{|X_t|} \sum_{x\in X_t}\frac{1}{\ell(\gamma_{x,r,t})}\int_{\gamma_{x,r,t}} f\\
& =  \frac{1}{|X_t|} \sum_{x\in X_t}\int_{\partial B_{r}(x)}f.
\end{align*}
On the other hand, by Lemma~\ref{lemma:Samko analog} for $f\in \cH _k^d$, we have
$$
\frac{1}{|X_t|} \sum_{x\in X_t} \int_{\partial B_{r}(x)}f = c_{d,k}(r)
\frac{1}{|X_t|} \sum_{x\in X_t} f(x) = c_{d,k}(r) \int _{\S ^d} f  =0=\int _{\S ^d} f \,.
$$
In combination we obtain 
 \begin{equation*}
 \frac{1}{\ell(\gamma^r_t)}\int_{\gamma^r_t} f  = \int_{\S^d}  f 
\end{equation*}
for all $f\in \bigoplus _{k=0}^t \cH _k^d$. 
 This concludes the first part of the proof.
\end{proof}

\subsection{Part II of the proof of Theorem \ref{thm:Sd existence}:
  Existence of a single closed curve }
If $r$ is too small, then the trajectory $\Gamma^r_t$ is not
connected. If $r$ is too big, then we may not match the desired   asymptotics
$\ell(\Gamma^r_t)\lesssim t^{\frac{d(d-1)}{2}}$. Since 
$|X_t|\asymp t^d$ and the induction hypothesis yields $
\ell(\gamma_{x,r,t})\lesssim   t^{\frac{(d-1)(d-2)}{2}} \sin r   $,  
the total length
of  $\gamma^r_t$ is  
\begin{equation} \label{length0}
	\ell(\gamma^r_t)=\ell(\gamma_{x,r,t}) |X_t|  \asymp  \sin r \, 
        t^{\frac{(d-1)(d-2)}{2}} t^d \asymp   t^{\frac{d(d-1)}{2}+1} \sin r\,.
      \end{equation}
      This estimate suggests that we choose $r=r_t$ as $r_t \asymp t ^{-1}$.
Precisely, let  $\rho_t$ be the covering radius  of $X_t$, then we set 
\begin{equation*}
r_t:=2\rho_t,
\end{equation*}
 Therefore 
$\sin r\asymp \rho_t \lesssim t^{-1}$, so that \eqref{length0} leads
to the expected  total length
\begin{equation*}
	\ell(\gamma^{2\rho_t}_t) \lesssim  t^{-1}  t^{\frac{d(d-1)}{2}+1} \asymp t^{\frac{d(d-1)}{2}}.
\end{equation*}
To ensure that $\Gamma^{2\rho_t}_t$ is connected, we will construct
$\gamma_{x,2\rho_t,t}$, for $x\in X_t$, in a sequential fashion.  

\begin{proof}[Proof of Theorem \ref{thm:Sd existence} (Part II)]
As in Section~\ref{sec:sphere} we consider the  graph
$\mathcal{G}_{\rho_t}$ with vertices $X_t$ and edges between distinct
$x,y\in X_t$ if and only if $B_{\rho_t}(x)\cap B_{\rho_t}(y) \neq \emptyset$.  According to  Lemma \ref{lemma:G connection 0}, the graph $\mathcal{G}_{\rho_t}$ is connected. Therefore, it possesses a  spanning
tree 
$\mathcal{T}_{\rho_t}$~\cite{diestel}; this is a subgraph that contains all
vertices of $\cG _{\rho _t}$ such that every vertex $y$ can be reached
by a unique path from a root $x_0$. 

We start at the  root $x_0$ of $\mathcal{T}_{\rho_t}$ and  take a $t$-design curve $\gamma_{x_0,2\rho_t,t}$ for $\partial
B_{2\rho}(x_0)$. Recall from Lemma~\ref{crux2}
that $\gamma_{x_0,2\rho_t,t}$ is obtained as
the image of a $t$-design curve $\gamma $ in $\sdd$ via the diffeomorphism $\phi
_{x_0,2\rho _t}$ as $\gamma_{x_0,2\rho_t,t} = \phi
_{x_0,2\rho _t} (\gamma )$ and we suppose that the trajectory of $\gamma$ is a union of circles. 

Now consider the first descendant $x_1$ of $x_0$ in
the tree $\mathcal{T}_{\rho_t}$. Since $\dist(x_0,x_1)\leq
2\rho _t$, Lemma~\ref{lemma:induction curve sphere}(ii) (with $r=2\rho
_t$) implies that
$$
\partial B_{2\rho _{t}}(x_0) \cap
\partial B_{2\rho _t}(x_1) = \phi _{x_0,2\rho _t} \big( \partial B_\sigma (e_d) \big) 
$$
for some $\sigma > \pi/ 4$. Since $\gamma $ is a $t$-design curve in
$\sdd$, the covering radius of its trajectory is of the order $t ^{-1} $ and is thus smaller than
$\sigma $ for $t$ large enough.  Lemma~\ref{topo}  implies that
$\Gamma \cap \partial B_\sigma (e_d) \neq \emptyset $, and after
applying $\phi _{x_0, 2\rho _t}$ we obtain that
$$
\Gamma _{x_0,2\rho_t,t} \cap \partial B_{2\rho _{t}}(x_0) \cap
\partial B_{2\rho _t}(x_1)  \neq \emptyset \, .
$$
Let $w_1\in \S ^d$ be a point in this intersection.  
By Lemma \ref{crux2}(iv) applied to $\Psi = \Gamma _{x_0,2\rho _t,t}$
and $w_1$, there exists  a $t$-design curve
$\tilde \gamma$ in $\sdd $, whose image $\phi _{x_1,2\rho _t}
(\tilde{\gamma}) =  \gamma_{x_1,2\rho_t,t}$ is a $t$-design  for $\partial
B_{2\rho}(x_1)$ such that  $w_1\in \Gamma_{x_1,2\rho_t,t}$ and the
intersection $\Gamma_{x_0,2\rho_t,t}\cap\Gamma_{x_1,2\rho_t,t}$
contains only  finitely many points.

The next descendant $x_2$ leads to 
$$
\partial B_{2\rho _{t}}(x_1) \cap
\partial B_{2\rho _t}(x_2) = \phi _{x_1,2\rho _t} \big( \partial B_\sigma (e_d) \big) 
$$
for some $\sigma > \pi/ 4$. The same arguments as above yield the existence of  
$$
w_2\in \Gamma _{x_1,2\rho_t,t} \cap \partial B_{2\rho _{t}}(x_1) \cap
\partial B_{2\rho _t}(x_2)\,.
$$
We now apply Lemma \ref{crux2}(iv)  with $w_2$ and $\Psi = \Gamma
_{x_0,2\rho_t,t} \cup \Gamma _{x_1,2\rho_t,t}$ and obtain  a
$t$-design curve $\Gamma _{x_2,2\rho_t,t}$ in $\partial B_{\rho
  _t}(x_2)$, such that $w_2\in \Gamma _{x_2,2\rho_t,t}$ and $\Gamma
_{x_2,2\rho_t,t} \cap \big(\Gamma _{x_0,2\rho_t,t}\cup \Gamma
_{x_1,2\rho_t,t}\big)$ is finite.

This process is repeated till we reach a leaf of the spanning tree $\mathcal{T}_{\rho_t}$. 
Then we return to the last branch-off in $\mathcal{T}_{\rho_t}$ and
proceed with the next branch of the tree.

Finally, this construction leads to a connected set of circles in $\S
^d$, and the resulting  trajectory $\Gamma_t:=\bigcup_{x\in X_t}\Gamma_{x,2\rho_t,t}$ of all curves is a connected set with finitely many intersection points. 

By construction, $\Gamma_t$ is a connected set of finitely many
circles. Although Lemma \ref{lemma:G connection} is formulated for a
graph $\mathcal{G}$ constructed from finitely many circles in $\S^2$,
its proof only uses combinatorial arguments and hence also holds for
circles in $\S^d$. Since $\Gamma_t$ is connected, so is
$\mathcal{G}$. The second part of the proof of Lemma \ref{lemma:G
  connection} shows that we may fix an arbitrary orientation, and then the corresponding
directed graph $\mathcal{G}_{\rightarrow}$ is also connected. Thus,
$\Gamma_t$ can be traversed by a single continuous curve.  See again
Figure \ref{fig:sphere} for a pictorial argument.  
\end{proof}

\section{Some applications}\label{sec:Rd}
We now discuss a few direct applications of $t$-design curves to
mobile sampling on the sphere and exact integration of  polynomials with respect to the measure
$\mathrm{e}^{-\|x\|}\mathrm{d}x$ on $\R^d$.

\subsection{Mobile sampling on the sphere}
Here we prove Corollary~\ref{corintro} of the Introduction and show
that a polynomial $f$ of degree $t$ can be reconstructed from its
restriction to a $2t$-design curve.

For its formulation we recall that $\Pi _t$ restricted to $\S^d$ is a reproducing kernel
Hilbert space with respect to the inner product from $L^2(\S ^d)$. This means
that for every $x\in \S ^d$ there is a polynomial $k_x \in \Pi _t$,
such that
$$
f(x) = \langle f, k_x \rangle = \int _{\S ^d} f  \, \overline{k_x} \,
.
$$
This kernel possesses an explicit description by means of zonal
spherical harmonics and Gegenbauer (or ultraspherical) 
polynomials ~\cite{Stein:1971kx}. Let $P^\lambda _k,
k\in \mathbb{N},$
be the sequence of Gegenbauer polynomials associated with  $\lambda
>0$. They are   defined by their generating
function
$$
(1-2rx + r^2)^{-\lambda } = \sum _{k=0}^\infty P^\lambda _k(x) r^k \, .
$$
Using~\cite[Thm.~2.14]{Stein:1971kx}, there are real constants $b_{k,d}$,
such that
\begin{equation}
  \label{eq:rkrk}
  k_x (y) = \sum _{k=0}^t b_{k,d} P^{\frac{d-1}{2}}_k(x\cdot y) \,
  \qquad x,y \in \S ^d \, .
\end{equation}
We can now extend the formulation of Corollary ~\ref{corintro} of the Introduction as
follows.
\begin{proposition}
Let $\gamma $ be a $2t$-design curve on $\S ^d$ and $f$ a restriction of a polynomial
 of degree $t$ onto $\S^d$. Then
  \begin{equation}
   \label{eq:int4b}
   \frac{1}{\ell (\gamma )} \int _\gamma |f|^2 = \int _{\S ^d} |f|^2
   \, 
 \end{equation}
 and $f$ is reconstructed from its values along $\gamma$ by 
 \begin{equation}
   \label{eq:recon44}
   f(x)= \frac{1}{\ell (\gamma ) } \int _{\gamma} f k_x  ,\quad\text{for all } x\in\S^d\, .
 \end{equation}
\end{proposition}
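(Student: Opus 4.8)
Both assertions reduce to a single mechanism: the product of two restrictions of degree-$t$ polynomials is the restriction of a polynomial of total degree at most $2t$, so the $2t$-design property of $\gamma$ (Definition \ref{def:design curve}) converts a line integral into a spherical integral. I would treat the isometry \eqref{eq:int4b} and the reconstruction formula \eqref{eq:recon44} in turn, and then read off uniqueness from the latter.

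For \eqref{eq:int4b} I would first record that if $f = F|_{\S^d}$ for an ambient polynomial $F$ of degree $t$, then $\bar F$ is again a polynomial of degree $t$, so that $|f|^2 = (F\bar F)|_{\S^d}$ is the restriction of a polynomial of total degree $\leq 2t$. Writing $F = F_1 + \ii F_2$ with $F_1,F_2$ real and applying Definition \ref{def:design curve} to the real polynomials $F_1F_1, F_2F_2, F_1F_2$ separately (both sides of the design identity being $\C$-linear in the integrand), the $2t$-design property yields $\frac{1}{\ell(\gamma)}\int_\gamma |f|^2 = \int_{\S^d}|f|^2$. This is precisely the observation already noted just after Corollary \ref{corintro}.

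For \eqref{eq:recon44} the starting point is the reproducing property $f(x) = \langle f, k_x\rangle = \int_{\S^d} f\,\overline{k_x}$ of the kernel $k_x\in\Pi_t$. The key elementary step is that $k_x$ is \emph{real-valued}: by \eqref{eq:rkrk} it is a real linear combination of Gegenbauer polynomials $P^{(d-1)/2}_k(x\cdot y)$, which have real coefficients, so $\overline{k_x} = k_x$ and hence $f(x) = \int_{\S^d} f\,k_x$. Since $f$ and $k_x$ are both restrictions of degree-$t$ polynomials, the product $f\,k_x$ is the restriction of a polynomial of total degree $\leq 2t$; applying the $2t$-design property once more (again extended to complex integrands by linearity) gives $\int_{\S^d} f\,k_x = \frac{1}{\ell(\gamma)}\int_\gamma f\,k_x$. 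Chaining the two identities produces \eqref{eq:recon44}.

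Uniqueness is then immediate: the right-hand side of \eqref{eq:recon44} depends only on the restriction of $f$ to the trajectory of $\gamma$, so any two polynomials of degree $t$ agreeing along $\gamma$ must agree at every $x\in\S^d$; alternatively, \eqref{eq:int4b} applied to $f-g$ shows $\int_{\S^d}|f-g|^2 = 0$ whenever $f=g$ on $\gamma$. I do not expect a genuine obstacle here; the only points needing care are bookkeeping ones, namely confirming that $|f|^2$ and $f\,k_x$ truly have ambient total degree $\leq 2t$ (so the $2t$-design hypothesis is applicable), that the real-variable identity of Definition \ref{def:design curve} extends by $\C$-linearity to complex-valued integrands, and that the reality of $k_x$ permits dropping the complex conjugate in the reproducing formula.
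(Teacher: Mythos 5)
Your proof is correct and follows essentially the same route as the paper: the identity \eqref{eq:int4b} is the immediate consequence of $|f|^2\in\Pi_{2t}$ noted after Corollary \ref{corintro}, and \eqref{eq:recon44} is obtained exactly as in the paper by combining the reproducing property, the reality of $k_x$ from \eqref{eq:rkrk}, and the $2t$-design property applied to $f\,k_x\in\Pi_{2t}$. Your additional remarks on complex-linearity and on uniqueness are fine bookkeeping that the paper leaves implicit.
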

\begin{proof}
  We only need to prove the reconstruction formula~\eqref{eq:recon44}. Since $f k_x\in \Pi _{2t}$ and $k_x$ is real-valued, the reproducing property yields
  \begin{equation*}
  f(x)  = \int _{\S ^d} f  \, k_x =  \frac{1}{\ell (\gamma ) } \int _{\gamma} f k_x\,.\qedhere
  \end{equation*}
\end{proof}

\subsection{Integration of polynomials on $\R^d$ with respect to $\mathrm{e}^{-\|x\|}\mathrm{d}x$}
Next we consider  the integration problem
\begin{equation*}
	\int_{\R^d} f(x)\mathrm{e}^{-\|x\|}\mathrm{d}x.
\end{equation*}
Recall that the family of generalized
Laguerre polynomials $(L^{(d-1)}_n)_{n\in\N}$ are orthogonal with
respect to the measure $r^{d-1}\mathrm{e}^{-r}\mathrm{d}r$ on
$[0,\infty)$. Using the zeros of $L_n^{(d-1)}$, we obtain the
following quadrature rule, where $\Pi_t$ now stands for polynomials of degree at most $t$ in $d$ variables. 
\begin{corollary}\label{corol:001}
	Let $\gamma$ be a spherical $t$-design curve in
        $\S^{d-1}$. For every integer $n\geq \frac{t+1}{2}$, 
	let $\{r_j\}_{j=1}^n\subset(0,\infty)$ be the set of zeros of
        $L^{(d-1)}_n$ with the  associated 
	weights $\{w_j\}_{j=1}^n\subset(0,\infty)$ for  Gaussian quadrature. Then we have
	\begin{equation*}
		\int_{\R^d} f(x)\mathrm{e}^{-\|x\|}\mathrm{d}x = \frac{\vol(\S^{d-1})}{\ell(\gamma)}\sum_{j=1}^{n} \frac{w_j}{r_j} \int_{r_j\gamma} f,
		\qquad \text{ for all } f\in\Pi_t.
	\end{equation*}
\end{corollary}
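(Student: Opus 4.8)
The plan is to pass to polar coordinates and separate the radial and angular parts of the integral, then handle the angular part with the $t$-design property of $\gamma$ and the radial part with Gauss--Laguerre quadrature. Writing $x = r\omega$ with $r = \|x\|\in[0,\infty)$ and $\omega\in\S^{d-1}$, one has $\mathrm{d}x = \vol(\S^{d-1})\,r^{d-1}\,\mathrm{d}r\,\mathrm{d}\omega$, where $\mathrm{d}\omega$ denotes the normalized surface measure on $\S^{d-1}$. This gives
\begin{equation*}
\int_{\R^d} f(x)\mathrm{e}^{-\|x\|}\mathrm{d}x = \vol(\S^{d-1})\int_0^\infty r^{d-1}\mathrm{e}^{-r}\Big(\int_{\S^{d-1}} f(r\omega)\,\mathrm{d}\omega\Big)\mathrm{d}r,
\end{equation*}
so that everything reduces to the inner angular average followed by a one-dimensional radial integral against the Laguerre weight $r^{d-1}\mathrm{e}^{-r}$.

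For the angular step I would fix $r$ and observe that $\omega\mapsto f(r\omega)$ is the restriction to $\S^{d-1}$ of a polynomial of degree $\leq t$, so the $t$-design property of $\gamma$ applies and
\begin{equation*}
\int_{\S^{d-1}} f(r\omega)\,\mathrm{d}\omega = \frac{1}{\ell(\gamma)}\int_0^1 f(r\gamma(s))\|\dot{\gamma}(s)\|\,\mathrm{d}s.
\end{equation*}
The remaining ingredient is the scaling identity for line integrals: the dilated curve $r\gamma$ has speed $r\|\dot\gamma\|$, hence $\int_{r\gamma} f = r\int_0^1 f(r\gamma(s))\|\dot\gamma(s)\|\,\mathrm{d}s$. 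Setting $h(r):=\int_0^1 f(r\gamma(s))\|\dot\gamma(s)\|\,\mathrm{d}s = \tfrac{1}{r}\int_{r\gamma} f$, the angular average equals $h(r)/\ell(\gamma)$.

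Substituting back yields
\begin{equation*}
\int_{\R^d} f(x)\mathrm{e}^{-\|x\|}\mathrm{d}x = \frac{\vol(\S^{d-1})}{\ell(\gamma)}\int_0^\infty h(r)\, r^{d-1}\mathrm{e}^{-r}\,\mathrm{d}r.
\end{equation*}
Expanding $f(r\gamma(s)) = \sum_{|\alpha|\leq t} c_\alpha\, r^{|\alpha|}\gamma(s)^\alpha$ shows that $h$ is a genuine polynomial in $r$ of degree $\leq t$. The final step is Gauss--Laguerre quadrature: the nodes $r_j$ (zeros of $L_n^{(d-1)}$) and weights $w_j$ integrate polynomials of degree $\leq 2n-1$ exactly against $r^{d-1}\mathrm{e}^{-r}$, which covers $\deg h\leq t$ precisely when $2n-1\geq t$, i.e. $n\geq\tfrac{t+1}{2}$. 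Then $\int_0^\infty h(r)\,r^{d-1}\mathrm{e}^{-r}\,\mathrm{d}r = \sum_j w_j h(r_j) = \sum_j \tfrac{w_j}{r_j}\int_{r_j\gamma} f$, which is the claimed formula.

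The computation is essentially routine once set up; the one place to stay careful is the bookkeeping of the single factor of $r$ introduced by dilating the curve. Because the arc-length element of $r\gamma$ is $r$ times that of $\gamma$, the line integral $\int_{r\gamma} f$ equals $r\,h(r)$ with $h$ of degree $\leq t$; this is exactly the factor that, after dividing by $r$ to recover the angular average, leaves the radial integrand in the form $h(r)\,r^{d-1}\mathrm{e}^{-r}$ with the correct Laguerre weight and with $\deg h\leq t$ rather than $t+1$. This alignment is what makes the threshold $n\geq(t+1)/2$ match the Gauss quadrature exactness degree $2n-1$, and it also accounts for the factor $w_j/r_j$ (rather than $w_j$) in the statement, since $h(r_j)=\tfrac{1}{r_j}\int_{r_j\gamma} f$. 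A secondary point is the normalization constant: writing $\mathrm{d}x = \vol(\S^{d-1})\,r^{d-1}\,\mathrm{d}r\,\mathrm{d}\omega$ with $\mathrm{d}\omega$ normalized ensures the prefactor $\vol(\S^{d-1})/\ell(\gamma)$ emerges exactly as stated.
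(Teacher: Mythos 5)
Your proof is correct and uses essentially the same approach as the paper: polar coordinates, Gauss--Laguerre exactness for degree $\leq 2n-1 \geq t$, the $t$-design property of $\gamma$, and the dilation identity $\int_{r\gamma} f = r\int_0^1 f(r\gamma(s))\|\dot\gamma(s)\|\,\mathrm{d}s$. The only difference is the order of the two exactness steps (the paper applies the radial quadrature first, to $r\mapsto f(rz)$ for fixed $z$, and then the design property to each $z\mapsto f(r_j z)$, while you apply the design property first and then quadrature to the degree-$\leq t$ radial polynomial $h$), which is merely an interchange of Fubini steps.
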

Thus, the scaled curves $\{r_j\gamma\}_{j=1}^{n}$ with weights $\{\frac{w_j\vol(\S^{d-1})}{r_j \ell(\gamma)}\}_{j=1}^{n}$ form an exact quadrature rule for $\Pi_t$ with respect to the measure $\mathrm{e}^{-\|x\|}\mathrm{d}x$. 
\begin{proof} 
Gaussian
quadrature based on the zeros of $L_n^{(d-1)}$ is exact for all univariate polynomials $g$ of degree at most $t\leq
2n-1$, i.e., 
\begin{equation*}
	\int_0^\infty g(r)r^{d-1}\mathrm{e}^{-r}\mathrm{d}r =
        \sum_{j=1}^{n} w_j g(r_j).
\end{equation*}
Let $f\in \Pi_t$. For fixed $z\in \S ^{d-1}$, the function
$r\mapsto f(rz)$ is a univariate polynomial of degree at most $t$ and therefore 
\begin{align*}
	\int_{\R^d} f(x)\mathrm{e}^{-\|x\|}\mathrm{d}x & = \int_{\S^{d-1}} \int_0^\infty f(rz)r^{d-1}\mathrm{e}^{-r}\mathrm{d}r
	\mathrm{d}z\\
	& =  \int_{\S^{d-1}} \Big(\sum_{j=1}^{n} w_j  f(r_j z ) \Big) \mathrm{d}z
	\intertext{Since $\gamma $ is a $t$-design curve on $\S
^{d-1}$ and $z\mapsto f(r_jz)$ is a polynomial in $\Pi_t$, we derive}
	\int_{\R^d} f(x)\mathrm{e}^{-\|x\|}\mathrm{d}x	& =  \frac{\vol(\S^{d-1})}{\ell(\gamma)}\int_\gamma  \sum_{j=1}^{n} w_j f(r_j\cdot)\\
	& =  \frac{\vol(\S^{d-1})}{\ell(\gamma)}\sum_{j=1}^{n} w_j \int_0^1 f(r_j\gamma(s))\|\dot{\gamma}(s)\|\mathrm{d}s\\
	& = \frac{\vol(\S^{d-1})}{\ell(\gamma)} \sum_{j=1}^{n} \frac{w_j}{r_j} \int_0^1 f(r_j\gamma(s))\|r_j\dot{\gamma}(s)\|\mathrm{d}s\\
	& = \frac{\vol(\S^{d-1})}{\ell(\gamma)}\sum_{j=1}^{n} \frac{w_j}{r_j} \int_{r_j\gamma} f.
\end{align*}
\end{proof}

\begin{example}\label{ex:001}
{\rm Let $\gamma^{(3,a_3)}$ be the spherical $3$-design curve of 
  Proposition \ref{prop:curve example}. The zeros of $L^{(2)}_2(r) =
  \tfrac{1}{2} r^2 -4r+6$ are $r_1=2$ and $r_2=6$. The associated Gaussian weights are $w_1=\frac{3}{2}$ and $w_2=\frac{1}{2}$. Therefore we obtain
\begin{equation}\label{eq:quad 3}
\int_{\R^3} f(x)\mathrm{e}^{-\|x\|}\mathrm{d}x = \frac{3\pi}{\ell(\gamma^{(3,a_3)})}\int_{2\gamma^{(3,a_3)}} f + \frac{\pi}{3\ell(\gamma^{(3,a_3)})}\int_{6\gamma^{(3,a_3)}} f,
\end{equation}
for $f\in\Pi_3$, see (a) in Figure \ref{fig:double}.}
\end{example}
\begin{figure}
\subfigure[The trajectories (red) $2\Gamma^{(3,a_3)}$ and (blue) $6\Gamma^{(3,a_3)}$ in Example \ref{ex:001}. ]{
\includegraphics[width=.38\textwidth]{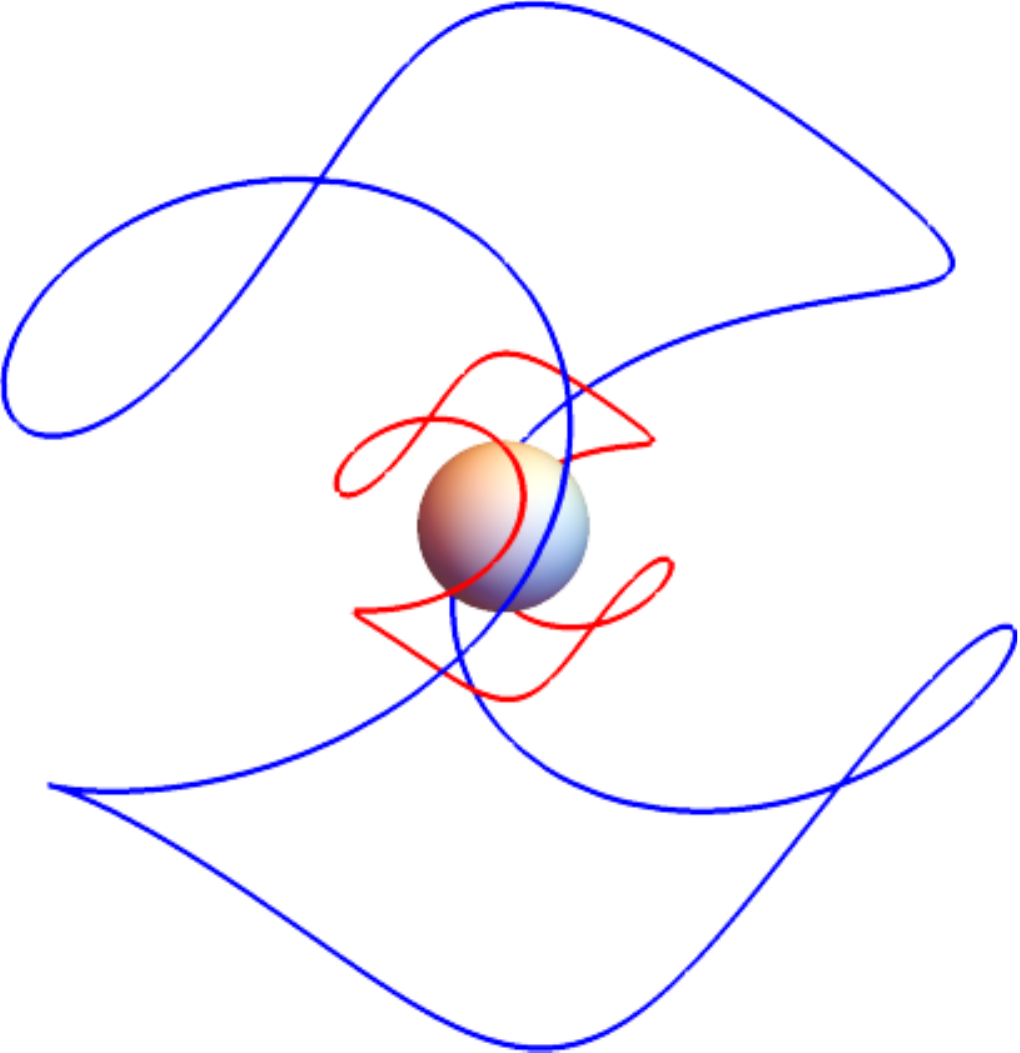}}\hspace{9ex}
\subfigure[Circles centered at $3$-design points in spheres of radius $2$ and $6$.]{
\includegraphics[width=.33\textwidth]{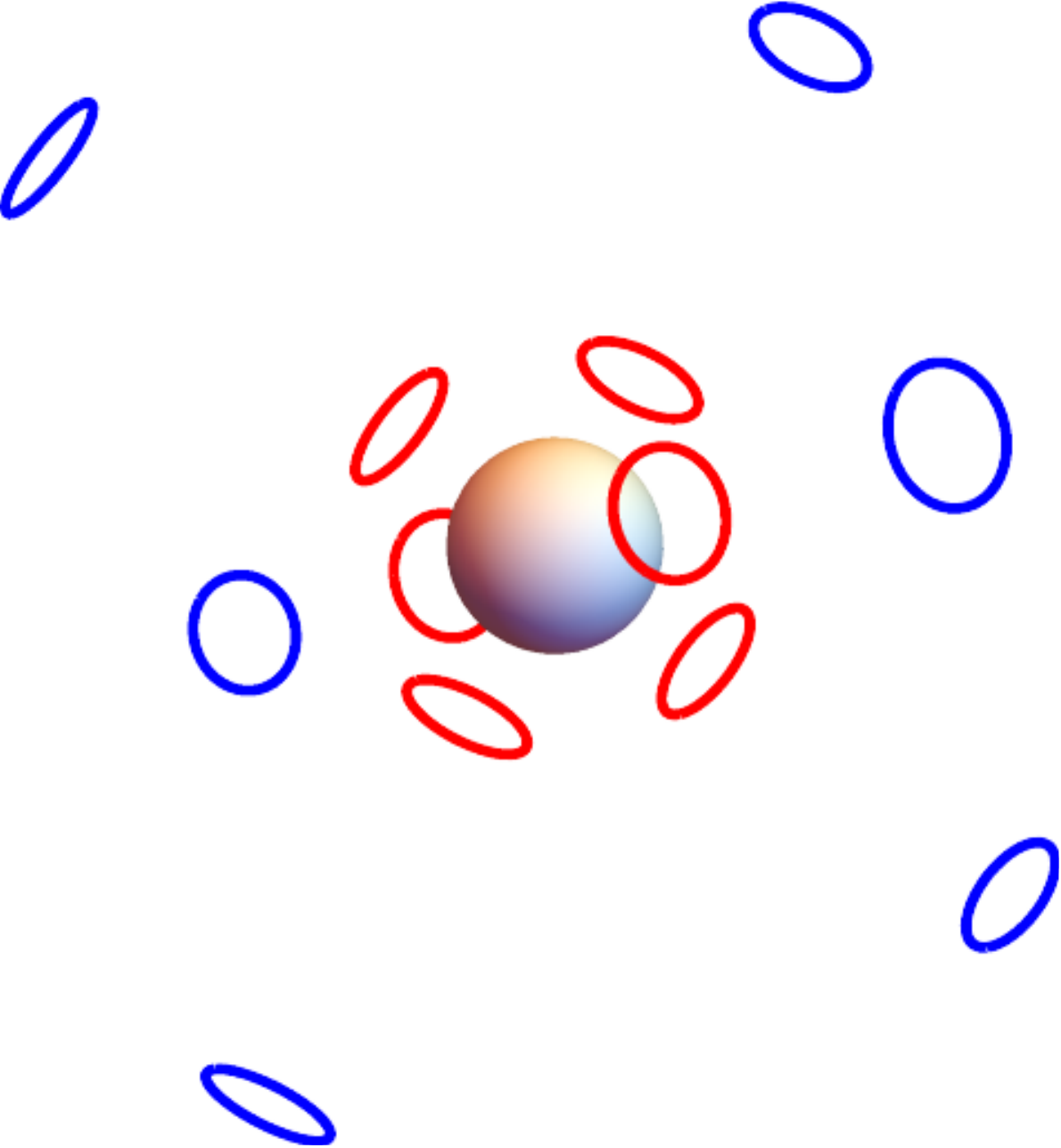}}
\caption{Trajectories provide exact integration of all polynomials in three variables of degree at most $3$ with respect to the measure $\mathrm{e}^{-\|x\|}\mathrm{d}x$. The unit sphere in the center is shown as a reference.}\label{fig:double}
\end{figure}

\begin{example}
{\rm  Consider $t$-design points $X_t\subset\S^{2}$ and take curves $\gamma_{x,r}$ whose trajectory are Euclidean circles $\Gamma_{x,r}$ of radius $\sin r$ centered at $x\in X_t$ as in the proof of Theorem \ref{thm:t-design}. Analogous to Corollary \ref{corol:001}, we may scale via the zeros of $L^{(d-1)}_n$ and use the associated Gaussian quadrature weights to obtain
\begin{equation*}
	\int_{\R^3} f(x)\mathrm{e}^{-\|x\|}\mathrm{d}x=\frac{2}{\sin r |X_t|}\sum_{j=1}^n \sum_{x\in X_t} \frac{w_j}{r_j  }\int_{r_j\gamma_{x,r}}  f,\qquad f\in\Pi_t,
\end{equation*}
see (b) in Figure \ref{fig:double} for $t=3$.}
\end{example}

\end{document}